\newdimen\prevdp
\def\leftlabel#1{\noalign{\prevdp=\prevdepth
		\kern-\prevdp\nointerlineskip\vbox to0pt{\vss\hbox{#1}}\kern\prevdp}}
\DeclareMathOperator*{\argmax}{arg\,max}
\newtheorem{theorem}{Theorem}[section]
\newtheorem{lemma}{Lemma}[section]
\newtheorem{corollary}{Corollary}[section]
\newtheorem{example}{Example}[section]{}
\newtheorem{definition}{Definition}[section]
\newtheorem{remark}{Remark}[section] 
\newtheorem{assumption}{Assumption}[section]
\begin{document}
\title{Approximation of the Pseudospectral Abscissa via Eigenvalue Perturbation Theory}
\author{Waqar Ahmed\footnote{Data Scientist, Kent Kart Ege Elektronik Sanayi Ticaret A.\c{S}, M\"{u}rselpa\c{s}a Bulvar{\i} 35230, Konak, Izmir, Turkey. \texttt{waqarahmed695@gmail.com}.} 
				 $\;$and 
		Emre Mengi\footnote{Ko\c{c} University, Department of Mathematics, Rumeli Feneri Yolu 34450, Sar{\i}yer, Istanbul, Turkey. \texttt{emengi@ku.edu.tr}.}}
\date{}
\maketitle

\begin{abstract}
\noindent
Reliable and efficient computation of the pseudospectral
abscissa in the large-scale setting is still not settled. Unlike the small-scale 
setting where there are globally convergent criss-cross algorithms,
all algorithms in the large-scale setting proposed to date
are at best locally convergent. We first describe how eigenvalue perturbation
theory can be put in use to estimate the globally rightmost point 
in the $\epsilon$-pseudospectrum if $\epsilon$ is small. Our treatment
addresses both general nonlinear eigenvalue problems, and the standard
eigenvalue problem as a special case. For small $\epsilon$, the estimates
by eigenvalue perturbation theory are quite accurate. In the standard 
eigenvalue case, we even derive a formula with an ${\mathcal O}(\epsilon^3)$ error. 
For larger $\epsilon$, the estimates can be used to initialize the locally convergent
algorithms. We also propose fixed-point iterations built on the the perturbation
theory ideas for large $\epsilon$ that are suitable for the large-scale setting. 
The proposed fixed-point iterations initialized by using eigenvalue perturbation theory
converge to the globally rightmost point in the pseudospectrum in a vast
majority of the cases that we experiment with. \\[.05em]

\textbf{Key words.} 
pseudospectral abscissa, pseudospectrum, large-scale eigenvalue problems,
nonlinear eigenvalue problems, eigenvalue perturbation theory, fixed-point iteration \\[.05em]

\textbf{AMS subject classifications.} 
65F15, 93D09, 65H17, 30C15

\end{abstract}

\section{Introduction}
The $\epsilon$-pseudospectrum of a matrix $A \in {\mathbb C}^{n\times n}$ for a
prescribed real number $\epsilon > 0$, denoted as $\Lambda_{\epsilon}(A)$, 
is the set consisting of eigenvalues 
of all matrices at a distance not more than $\epsilon$ with respect to the matrix 
2-norm from $A$. Formally, 
\begin{equation}\label{eq:defn_ps}
	\Lambda_{\epsilon}(A)	\;	:=	\;	
		\bigcup_{\Delta \in {\mathbb C}^{n\times n} \text{ s.t. } \| \Delta \|_2 \leq \epsilon}	
				\:	\Lambda(A + \Delta)	\: 
\end{equation}
with $\| \Delta \|_2$ denoting the 2-norm of the matrix $\Delta$; 
see, e.g., \cite{Tre99, TreM05} and references therein.
The real part of the rightmost point in the set $\Lambda_{\epsilon}(A)$, that is
\begin{equation}\label{eq:defn_pspa}
	\alpha_{\epsilon}(A)
		\;	:=	\;
	\max \{ \text{Re}(z)	\;	|	\;	z \in \Lambda_{\epsilon}(A)	\}	\:	,
\end{equation}
is referred as the $\epsilon$-pseudospectral abscissa of $A$. It carries significance
to gain information about the transient behavior of the autonomous system $x'(t) = A x(t)$,
in particular to have an estimate of $\sup_{t > 0}	\:	\| x(t) \|_2$;
see, e.g., \cite[Section 2.1]{Men06}. Moreover, if the system $x'(t) = A x(t)$ is asymptotically stable,
that is if the real parts of the eigenvalues of $A$ are all negative, $\alpha_{\epsilon}(A)$ indicates 
whether the system is robustly stable or not. More specifically, $\alpha_{\epsilon}(A) < 0$
implies that all nearby systems $x'(t) = \widetilde{A} x(t)$ with systems matrices 
$\widetilde{A}$ such that $\| A - \widetilde{A} \|_2 \leq \epsilon$ remain 
asymptotically stable. One of our aims here is to use eigenvalue perturbation theory for the 
estimation of $\alpha_{\epsilon}(A)$. As we illustrate below, if $\epsilon > 0$ is small,
eigenvalue perturbation theory can be put in use to approximate 
$\alpha_{\epsilon}(A)$ quite accurately with little computation. 


The quantity $\alpha_{\epsilon}(A)$ can be posed as the solution of a nonsmooth
and nonconvex optimization problem. Based on this characterization, 
when $A$ is of small size, there is a very reliable 
and globally convergent algorithm to compute $\alpha_{\epsilon}(A)$, namely the criss-cross 
algorithm \cite{BurLO03}, which is no longer practical when $A$ has large size. 
For the large-scale setting, 
there are, for instance, a fixed-point iteration \cite{GugO11}, a gradient-flow based approach \cite{GugL13}
and a subspace approach \cite{KreV14}. 
Even though these approaches are suitable to compute $\alpha_{\epsilon}(A)$
for much larger matrices $A$, 
they suffer from local convergence, i.e., they converge to a locally rightmost point,
which is not necessarily rightmost globally. If $\epsilon > 0$ is not so small,
in this work we still provide estimates for a globally rightmost point in $\Lambda_{\epsilon}(A)$
using eigenvalue perturbation theory. Locally convergent algorithms, 
e.g., the approaches in \cite{GugO11}, \cite{GugL13}, \cite{KreV14}, can possibly 
be initialized with these estimates so that with high probability they converge to a
rightmost point globally. Moreover, we describe a new fixed-point iteration, which,
when initialized with these estimates, converges typically to a 
globally rightmost point in $\Lambda_{\epsilon}(A)$. The new fixed-point iteration 
usually seems to be faster than the existing fixed-point iteration \cite{GugO11}.


In a more general setting, the $\epsilon$-pseudospectrum $\Lambda_{\epsilon}(T)$
of a matrix-valued function
\begin{equation}\label{eq:mat_fun}
	T(\lambda)	\;	=	\;	t_1(\lambda) T_1 + \dots + t_{\kappa}(\lambda) T_{\kappa}
\end{equation}
for given square matrices $T_j \in {\mathbb C}^{n\times n}$
and holomorphic functions $t_j : {\mathcal D} \rightarrow {\mathbb C}$ for $j = 1, \dots , \kappa$
with ${\mathcal D}$ denoting an open subset of ${\mathbb C}$ can be defined similarly. 
The set $\Lambda_{\epsilon}(T)$ still consists of eigenvalues of all matrix-valued functions
at a distance at most $\epsilon$ from $T$. We refrain from a formal definition of 
$\Lambda_{\epsilon}(T)$ for now, but a formal definition is given later in Section \ref{sec:nle_psa},
in particular in (\ref{eq:defn_NEP_pspec}). The $\epsilon$-pseudospectral abscissa 
$\alpha_{\epsilon}(T)$ is the real part of a rightmost point in $\Lambda_{\epsilon}(T)$,
assuming such a point exists. The criss-cross algorithm from the matrix setting \cite{BurLO03} 
generalizes at least for some specific instances of $T$; see in particular \cite[Section 2.3.4]{Men06}
for a criss-cross algorithm for matrix polynomials and \cite{MehM24} for its specialization 
to quadratic matrix polynomials. It is again globally convergent, yet not suitable 
for large problems.
A fixed-point iteration, in essence a generalization of \cite{GugO11}, is proposed to 
compute $\alpha_{\epsilon}(T)$ in \cite{MicG12}, but converges to locally rightmost 
points in $\Lambda_{\epsilon}(T)$, that are not necessarily rightmost globally. 
A subspace approach in the context of a general matrix-valued function
presented in \cite{MeeMMV17} is again prone to local convergence.
The number of iterations of the criss-cross algorithm when applicable, 
and situation with local convergence deteriorate for some important classes
of matrix-valued functions, including matrix polynomials.
We discuss how $\alpha_{\epsilon}(T)$ can be estimated accurately
for small $\epsilon$ using eigenvalue perturbation theory at little cost. 
For larger $\epsilon$, we derive estimates for a globally rightmost
point in $\Lambda_{\epsilon}(T)$, and introduce two fixed-point iterations to compute
$\alpha_{\epsilon}(T)$ based on these eigenvalue perturbation theory arguments.
The fixed-point iterations are simpler than the one in \cite{MicG12}, and appear to be
considerably more efficient than the criss-cross algorithms, at least on quadratic matrix 
polynomials. Even though they are locally convergent, when the fixed-point iterations 
are initialized using eigenvalue perturbation theory, they seem to converge to the globally 
rightmost point in $\Lambda_{\epsilon}(T)$ with high probability.


We start our treatment in the next section with the approximation of the 
$\epsilon$-pseudospectral abscissa using eigenvalue perturbation theory  
for the general setting of a nonlinear eigenvalue problem, which also 
encompasses the setting of a matrix. In Section \ref{sec:fp_mat_val},
partly based on the approximation result deduced via perturbation 
theory, we tailor fixed-point iterations for the $\epsilon$-pseudospectral abscissa 
of a nonlinear eigenvalue problem. Then, in Section \ref{sec:psaA}, we specialize 
into $\alpha_{\epsilon}(A)$ for a matrix $A$. Section \ref{sec:glob_opts} 
delves into the estimation of the globally rightmost points in $\Lambda_{\epsilon}(T)$ 
and $\Lambda_{\epsilon}(A)$. MATLAB implementations of the fixed-point
iterations derived that are initialized based on eigenvalue perturbation theory
are publicly available \cite{AhmM25}. Some details of these implementations are
spelled out in Section \ref{sec:software}. In Section \ref{sec:num_examp},
we report the results of numerical experiments conducted with these implementations,
illustrating the accuracy and efficiency of the proposed approaches based on perturbation theory.

\section{First-order approximation of the pseudospectral abscissa
		of a nonlinear eigenvalue problem}\label{sec:fo_nlevp}
We silently assume throughout that the matrix-valued function $T$ as in (\ref{eq:mat_fun})
is regular, i.e., $\text{det}(T(\lambda))$ is not identically zero. 
Suppose $\mu_0 \in {\mathcal D}$ is a simple eigenvalue of $T$ as in (\ref{eq:mat_fun}), 
and is isolated, i.e., there is a neighborhood of $\mu_0$ 
such that $\mu_0$ is the unique eigenvalue of $T(\lambda)$ in this neighborhood.

A right eigenvector $x \in {\mathbb C}^n \backslash \{ 0 \}$  
corresponding to $\mu_0$  satisfies $T(\mu_0) x = 0$, while a corresponding left eigenvector 
$y \in {\mathbb C}^n \backslash \{ 0 \}$ satisfies $y^\ast T(\mu_0) = 0$.


We first aim to come up with an upper bound for the real part of any eigenvalue
originating from $\mu_0$ when $T$ is subject to perturbations with norms 
not exceeding $\epsilon$, which is a prescribed positive real number. 

Formally, suppose the coefficients $T_j$ are subject to perturbations  
$\Delta T_j \in {\mathbb C}^{n\times n}$, $j = 1, \dots , \kappa$.
For given nonnegative real numbers $w_j$ representing weights on the 
perturbations of $T_j$, we consider 
\[
	\Delta T(\lambda)		\; := \;		t_1(\lambda) w_1 \Delta T_1 
						+ \dots	+ 	t_{\kappa}(\lambda) w_{\kappa} \Delta T_{\kappa}	\: ,
\]
which we refer as $\Delta T(\lambda)$ corresponding to 
$\Delta := (\Delta T_1, \dots , \Delta T_{\kappa} )$ in some occasions in the subsequent discussions.
A silent assumption regarding the weights and scalar functions $t_j(\lambda)$, $j = 1, \dots , \kappa$
that we keep throughout is that
$\sum_{j=1}^\kappa w_j | t_j(\lambda) | \neq 0$ for all $\lambda \in {\mathcal D}$.
The next theorem, a corollary of Rouch\'{e}'s theorem \cite[Theorem 10.43(b)]{Rud87}, states that there is a unique 
simple eigenvalue of any matrix-valued function sufficiently close to $T$ corresponding to the
eigenvalue $\mu_0$ of $T$. In the theorem and its proof, 
$B_r(z_0) := \{ z \in {\mathbb C} \; | \; |z - z_0| < r \}$,
$\overline{B}_r(z_0) := \{ z \in {\mathbb C} \; | \; |z - z_0| \leq r \}$
denote the open ball, closed ball, respectively, of radius $r$ centered at $z_0 \in {\mathbb C}$ 
in the complex plane. The notation $\partial B_r(z_0)$ is used for the boundary of $B_r(z_0)$.  
\begin{theorem}\label{thm:eig_well_posed}
	Suppose $\mu_0 \in {\mathcal D}$ is a simple and isolated eigenvalue of $T$.
	There is an open interval $U \subset {\mathbb R}$ containing 0
	and a real number $r > 0$ satisfying $B_r(\mu_0) \subset {\mathcal D}$
	such that for every $\Delta = ( \Delta T_1 , \dots , \Delta T_{\kappa})$
	with
	$
	\left\|
		\left[
			\begin{array}{ccc}
				\Delta T_1		&	\dots		&	\Delta T_{\kappa}
			\end{array}
		\right]	
	\right\|_2	\leq	1
	$
	and every $\eta \in U$,
	the matrix-valued function $(T + \eta \Delta T)(\lambda)$ 
	for $\Delta T(\lambda)$ corresponding to $\Delta$ has only one
	eigenvalue in $B_r(\mu_0)$, which is simple.
\end{theorem}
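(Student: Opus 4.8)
The plan is to reduce the statement to a scalar application of Rouch\'{e}'s theorem to the holomorphic function $f(\lambda) := \det T(\lambda)$ and its perturbations. First I would use that $\mu_0$ is a simple and isolated eigenvalue of the regular function $T$: this means $f$ is holomorphic on ${\mathcal D}$, has a zero of order exactly one at $\mu_0$, and $\mu_0$ is the only zero of $f$ in some neighbourhood of it. Hence I can fix $r > 0$ small enough that $\overline{B}_r(\mu_0) \subset {\mathcal D}$ and $f$ has no zero in $\overline{B}_r(\mu_0) \setminus \{\mu_0\}$; in particular $f$ does not vanish on the compact circle $\partial B_r(\mu_0)$, so $m := \min_{\lambda \in \partial B_r(\mu_0)} |f(\lambda)| > 0$. (This $r$ will be the $r$ in the statement, since $\overline{B}_r(\mu_0) \subset {\mathcal D}$ implies $B_r(\mu_0) \subset {\mathcal D}$.)

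Next I would derive a perturbation estimate for the determinant that is uniform over all admissible $\Delta$. If $\| [\, \Delta T_1 \ \cdots \ \Delta T_{\kappa} \,] \|_2 \leq 1$, then each block satisfies $\| \Delta T_j \|_2 \leq 1$, so for $\lambda \in \overline{B}_r(\mu_0)$ one has $\| \Delta T(\lambda) \|_2 \leq \sum_{j=1}^{\kappa} w_j |t_j(\lambda)| =: \phi(\lambda)$, and $\phi$ is continuous on the compact set $\overline{B}_r(\mu_0)$, giving a bound $C := \max_{\lambda \in \overline{B}_r(\mu_0)} \phi(\lambda) < \infty$ that does not depend on $\Delta$. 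Since $\det(\cdot)$ is a polynomial in the matrix entries, it is Lipschitz on the bounded set $\{ X \in {\mathbb C}^{n\times n} : \| X - T(\lambda) \|_2 \leq C, \ \lambda \in \overline{B}_r(\mu_0) \}$; denoting a Lipschitz constant there by $L$, we get, for every admissible $\Delta$, every $\eta$ with $|\eta| \leq 1$, and every $\lambda \in \partial B_r(\mu_0)$,
\[
	\bigl| \det (T + \eta \Delta T)(\lambda) - \det T(\lambda) \bigr|
	\;\leq\; L \, |\eta| \, \| \Delta T(\lambda) \|_2
	\;\leq\; L \, C \, |\eta| \, .
\]
(Alternatively, expanding $\det(T + \eta\Delta T) - \det T$ by multilinearity of the determinant in the columns and bounding each term yields the same $\mathcal{O}(|\eta|)$ estimate, again uniform in $\Delta$ and in $\lambda \in \partial B_r(\mu_0)$.)

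Finally I would set $U := (-\eta_0, \eta_0)$ with $\eta_0 := \min\{ 1, \, m/(2LC) \}$ (with $\eta_0 = 1$ when $LC = 0$). For $\eta \in U$ and $\lambda \in \partial B_r(\mu_0)$ the estimate above gives $| \det (T + \eta \Delta T)(\lambda) - f(\lambda) | < m \leq |f(\lambda)|$, so Rouch\'{e}'s theorem applies to $f$ and $\det(T + \eta \Delta T)$ on $\overline{B}_r(\mu_0)$: these two holomorphic functions have the same number of zeros in $B_r(\mu_0)$, counted with multiplicity. Since $f$ has exactly one such zero, of order one, $\det(T + \eta \Delta T)$ likewise has exactly one zero in $B_r(\mu_0)$, and it has order one. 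Moreover $T + \eta \Delta T$ is regular, since its determinant is nonzero on $\partial B_r(\mu_0)$ and hence does not vanish identically; therefore the eigenvalues of $T + \eta \Delta T$ in $B_r(\mu_0)$ counted with algebraic multiplicity are exactly the zeros of its determinant there. Consequently $T + \eta \Delta T$ has a unique eigenvalue in $B_r(\mu_0)$, which is simple. The one step requiring genuine care --- the main obstacle --- is making the determinant perturbation estimate uniform over the whole admissible perturbation set and over $\lambda \in \partial B_r(\mu_0)$ simultaneously (so that a single $r$ and a single interval $U$ work for all $\Delta$); once that uniform $\mathcal{O}(|\eta|)$ bound is secured, the Rouch\'{e} argument and the passage back to eigenvalue counts are routine.
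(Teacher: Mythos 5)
Your proof is correct, and it shares the same backbone as the paper's argument: both reduce the statement to Rouch\'{e}'s theorem applied to $\det T(\lambda)$ and $\det (T+\eta\Delta T)(\lambda)$ on a circle $\partial B_r(\mu_0)$ chosen via the isolation and simplicity of $\mu_0$. Where you genuinely diverge is in how the uniformity of $U$ over the whole admissible set of perturbations is obtained. The paper proceeds qualitatively: for each fixed $\Delta$ and each $\lambda$ on the circle it produces an interval $U_{\lambda,\Delta}$ by continuity, intersects over $\lambda$ invoking compactness of the circle, and then intersects the resulting $U_\Delta$ over the compact set ${\mathcal S}$ of admissible $\Delta$'s. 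As written, that second intersection step is the weakest link of the paper's proof (an infinite intersection of open intervals containing $0$ need not be open without a joint-continuity or equicontinuity argument), and your approach sidesteps it entirely: you bound $\|\Delta T(\lambda)\|_2 \le \sum_j w_j|t_j(\lambda)| \le C$ uniformly over all admissible $\Delta$ and all $\lambda \in \overline{B}_r(\mu_0)$, use the Lipschitz property of the determinant on a bounded set to get a single estimate $|\det(T+\eta\Delta T)(\lambda)-\det T(\lambda)| \le LC|\eta|$, and then read off an explicit $\eta_0$. This quantitative route buys a cleaner and fully uniform argument (and even an explicit size for $U$), at the modest cost of introducing the Lipschitz constant $L$; your closing remark that $T+\eta\Delta T$ is regular, so that zeros of its determinant in $B_r(\mu_0)$ coincide with its eigenvalues there, is a detail the paper leaves implicit but is worth having stated.
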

\begin{proof}
For any prescribed $\Delta = ( \Delta T_1 , \dots , \Delta T_{\kappa})$
	with
	$
	\left\|
		\left[
			\begin{array}{ccc}
				\Delta T_1		&	\dots		&	\Delta T_{\kappa}
			\end{array}
		\right]	
	\right\|_2	\leq	1
	$,
let $g_{\Delta}(\eta,\lambda) := \text{det}((T + \eta \Delta T)(\lambda))$. Since $\mu_0$
is an isolated eigenvalue of $T$ and ${\mathcal D}$ is open, there is a real number $r > 0$ such
that $\overline{B}_r(\mu_0) \subset {\mathcal D}$ and $\mu_0$ is the only
eigenvalue of $T$ in $\overline{B}_r(\mu_0)$. Hence, the only root of $g_{\Delta}(0,\lambda)$ 
in $\overline{B}_r(\mu_0)$ is $\mu_0$, and $\mu_0$ is a simple root (i.e., with multiplicity one). 
Now let $\rho := \min \big\{ |g_{\Delta}(0,\lambda)| \; | \; \lambda \in \partial B_r(\mu_0) \big\} \, > \, 0$.
By continuity of $g_{\Delta}(\eta, \lambda)$ with respect to $\eta$, for every $\lambda \in \partial B_r(\mu_0)$,
there is an open interval $U_{\lambda, \Delta}$ containing 0
such that $|g_{\Delta}(\eta, \lambda) - g_{\Delta}(0,\lambda) | < \rho$
for all $\eta \in U_{\lambda, \Delta}$. Indeed, by the compactness of $\partial B_r(\mu_0)$,
the intersection $U_{\Delta} := \cap_{\lambda \in \partial B_r(\mu_0)} U_{\lambda, \Delta}$
is also an open interval containing 0. It follows that $|g_{\Delta}(\eta, \lambda) - g_{\Delta}(0,\lambda) | < \rho$
for all $\eta \in U_{\Delta}$ and all $\lambda \in \partial B_r(\mu_0)$.
Consequently, $|g_{\Delta}(0,\lambda)| > |g_{\Delta}(\eta, \lambda) - g_{\Delta}(0,\lambda) |$ 
for all $\lambda \in \partial B_r(\mu_0)$ for all $\eta \in U_{\Delta}$. 
By Rouch\'{e}'s theorem \cite[Theorem 10.43(b)]{Rud87}, 
the following assertion holds for every $\eta \in U_{\Delta}$:
there is only one root of $g_{\Delta}(\eta, \lambda)$ in $B_r(\mu_0)$, which is simple.

Finally, by the compactness of 
${\mathcal S} := \{ ( \Delta T_1 , \dots , \Delta T_{\kappa})	\;	|	\;	
	\left\|
		\left[
			\begin{array}{ccc}
				\Delta T_1		&	\dots		&	\Delta T_{\kappa}
			\end{array}
		\right]	
	\right\|_2	\leq	1	\}$,
the intersection $U := \cap_{\Delta \in {\mathcal S}} U_\Delta$ is also an open interval
containing 0. The function $g_\Delta(\nu,\lambda)$ has only one root, that is also simple,
equivalently $(T + \eta \Delta T)(\lambda)$ for $\Delta T(\lambda)$ corresponding to $\Delta$
has only one eigenvalue in $B_r(\mu_0)$, that is also simple, for every $\eta \in U$ and 
every $\Delta \in {\mathcal S}$.
\end{proof}
Throughout, we assume that $\epsilon > 0$ is sufficiently small so that
$[0,\epsilon] \subset U$ for $U$ as in Theorem \ref{thm:eig_well_posed}. Moreover,
for $U$ as in Theorem \ref{thm:eig_well_posed} and
for a particular $\Delta := ( \Delta T_1 , \dots , \Delta T_{\kappa})$ with
$
	\left\|
		\left[
			\begin{array}{ccc}
				\Delta T_1		&	\dots		&	\Delta T_{\kappa}
			\end{array}
		\right]	
	\right\|_2	\leq	1
$,
we define $\mu : U \rightarrow {\mathcal D}$ as follows: $\mu(\eta)$
for $\eta \in U$ is the unique simple eigenvalue of $(T + \eta \Delta T)(\lambda)$
for $\Delta T(\lambda)$ corresponding to $\Delta$
in $B_r(\mu_0)$ asserted in Theorem \ref{thm:eig_well_posed}.
By applying the analytic implicit function theorem to 
$g(\eta,\lambda) = \text{det}((T + \eta \Delta T)(\lambda))$, it follows that
the function $\mu(\eta)$ is analytic, i.e., the real and imaginary parts
of $\mu(\eta)$ are real-analytic functions.
Clearly, $\mu$ is continuous on $[0,\epsilon]$,
analytic on $(0, \epsilon)$ and satisfies $\mu(0) = \mu_0$.
To make the dependence of the eigenvalue function $\mu(\eta)$ 
on $\Delta$ explicit, 
we may also write it as $\mu(\eta ; \Delta)$. In addition, sometimes, we may 
also want to make the dependence of $\mu(\eta)$ on $\mu_0$ and $T$ explicit, in which
case we write $\mu(\eta ; \Delta , \mu_0)$ or $\mu(\eta ; \Delta , \mu_0, T)$.


Here, for a prescribed positive real number $\epsilon$ supposedly small 
(in particular such that $[0,\epsilon] \subset U$), we would like to estimate 
\begin{equation}\label{eq:defn_Reps}
\begin{split}
	&
	{\mathcal R}(\epsilon; \mu_0)
			\;	:=	\;	
	\max_{\Delta \in {\mathcal S}}	\:\:	\text{Re} \left\{ \mu(\epsilon ; \Delta) \right\}		,
	\\[.3em]
	&
	\text{where}	\quad
	{\mathcal S}	:=	
	\{
		 (\Delta T_1 , \dots , \Delta T_{\kappa})	\;\:	|	\;\:
		 \left\|
		\left[
			\begin{array}{ccc}
				\Delta T_1		&	\dots		&	\Delta T_{\kappa}
			\end{array}
		\right]	
		\right\|_2	\leq	1	
	\}	\:	,
\end{split}	
\end{equation}
i.e., the real part of the rightmost eigenvalue that can be attained
from the eigenvalue $\mu_0$ of $T(\lambda)$ by applying perturbations with norms not
exceeding $\epsilon$.


For a given $\Delta \in {\mathcal S}$, the function
\begin{equation}\label{eq:defn_L}
	{\mathcal L}_\Delta(\eta) \;		:=	\;	 \text{Re} \left\{ \mu(\eta ; \Delta) \right\}
\end{equation}
is continuous on $[0,\epsilon]$ and real-analytic on $(0,\epsilon)$, so Taylor's theorem implies
\begin{equation}\label{eq:L_expand}
	{\mathcal L}_\Delta(\epsilon)
		\;	=	\;
	{\mathcal L}_\Delta(0)	+
	{\mathcal L}'_\Delta(0) \epsilon		+
	{\mathcal O}(\epsilon^2)		
		\;	=	\;
	\text{Re}(\mu_0)	+
	{\mathcal L}'_\Delta(0) \epsilon		+
	{\mathcal O}(\epsilon^2).
\end{equation}
Consequently, it follows from the definition of ${\mathcal R}(\epsilon; \mu_0)$
in (\ref{eq:defn_Reps}) that
\begin{equation}\label{eq:defR}
	{\mathcal R}(\epsilon; \mu_0)
		\;	=	\;
	\text{Re}(\mu_0)		+
	\epsilon	\left\{		\max_{\Delta \in {\mathcal S}}	\: {\mathcal L}'_\Delta(0)	\right\}		+
	{\mathcal O}(\epsilon^2).
\end{equation}

\subsection{Expression for the derivative of $\mu(\eta ; \Delta)$ at $\eta = 0$}\label{sec:der_mu}
For a prescribed $\Delta \in {\mathcal S}$ and the corresponding $\Delta T(\lambda)$,
there is an analytic vector-valued function $x(\eta; \Delta)$ such that
$\| x(\eta; \Delta) \|_2 = 1$ and
\begin{equation}\label{eq:eig_eqn}
		\big\{  (T + \eta \Delta T)(\mu(\eta ; \Delta))  \big\} \, x(\eta ; \Delta) 	\;	=	\;	0
\end{equation}
for all $\eta \in U$ \cite[pages 32-33]{Rel69}. Here and elsewhere, $\| v \|_2$ denotes
the Euclidean norm for $v \in {\mathbb C}^n$. 
The vector $x(\eta; \Delta)$ is a unit right eigenvector 
of $(T + \eta \Delta T)(\lambda)$ corresponding to its eigenvalue $\mu(\eta ; \Delta)$.
In the subsequent derivations, we simply write $\mu'$, $x'$ for the derivatives
$\mu'(0 ; \Delta)$, $x'(0 ; \Delta)$. Without loss of generality, we let $x = x(0; \Delta)$,
a unit right eigenvector of $T(\lambda)$ corresponding to $\mu_0$.
To derive an expression for the
derivative of $\mu(\eta ; \Delta)$ at $\eta = 0$, we differentiate both sides of (\ref{eq:eig_eqn})
at $\eta = 0$, which yields
\begin{equation}\label{eq:eig_eqn2}
	\mu' T'(\mu_0) x	+	\Delta T(\mu_0) x	+	T(\mu_0) x'	\;	=	\;	0  \:  .
\end{equation}
Recall that $y$ denotes a left eigenvector of $T(\lambda)$ corresponding to $\mu_0$
that satisfies $y^\ast T(\mu_0) = 0$. By multiplying (\ref{eq:eig_eqn2}) by $y^\ast$ from left, 
exploiting also $y^\ast T(\mu_0) = 0$, we deduce
\begin{equation}\label{eq:der_mu}
	\mu'		\;	=	\;		- \frac{ y^\ast \Delta T(\mu_0) x}{y^\ast T'(\mu_0) x}	\:	,
\end{equation}
where $y^\ast T'(\mu_0) x \neq 0$ due to the assumption that $\mu_0$ is
a simple eigenvalue of $T(\lambda)$. This also gives rise to
\begin{equation}\label{eq:derL}
	{\mathcal L}'_\Delta(0)
			\;	=	\;
	\text{Re} \left\{ \mu'(0; \Delta) \right\}
			\;	=	\;
	- \text{Re}	\left\{	   	\frac{ y^\ast \Delta T(\mu_0) x}{y^\ast T'(\mu_0) x}	\right\}.
\end{equation}

\subsection{Maximizing ${\mathcal L}'_\Delta(0)$ over $\Delta \in {\mathcal S}$}
For the estimation of ${\mathcal R}(\epsilon; \mu_0)$ using the expansion in (\ref{eq:defR}),
it suffices to maximize ${\mathcal L}'_\Delta(0)$ over $\Delta \in {\mathcal S}$,
which we carry out in this subsection by exploiting the formula in (\ref{eq:derL}).  
Our derivation here has some similarities with those in \cite{PapH93, GenVD99, TisH01}
that concern computable characterizations of complex stability radii and pseudospectra
of matrix polynomials, but the context here is different than those works.  
Without loss of generality, we assume that the right and left eigenvectors 
$x$ and $y$ in (\ref{eq:derL}) are unit, i.e., $\| x \|_2 = \| y \|_2 = 1$. Moreover, we can 
assume without loss of generality that $x$ is such that $y^\ast T'(\mu_0) x$ is real 
and negative (i.e., supposing $y^\ast T'(\mu_0) x = \rho e^{{\rm i} \theta}$ for some 
positive real number $\rho$ and $\theta \in (0,2\pi)$, we may then replace
$x$ with $\widetilde{x} = -x e^{-{\rm i}\theta}$ so that $y^\ast T'(\mu_0) \widetilde{x} = -\rho$).
It is then evident from (\ref{eq:derL}) that
\begin{equation*}
\begin{split}
	{\mathcal L}'_\Delta(0)
		\;	&  \leq	\;
	\frac{1}{| y^\ast T'(\mu_0) x |} \| \Delta T(\mu_0) \|_2	\\
		\;	&	=	\;
	\frac{1}{| y^\ast T'(\mu_0) x |} 
	\left\|
		\left[
			\begin{array}{ccc}
				\Delta T_1		&	\dots		&	\Delta T_\kappa
			\end{array}
		\right]
		\left[
			\begin{array}{c}
				w_1 \, t_1(\mu_0) I		\\
					\vdots		\\
				w_\kappa  \, t_\kappa(\mu_0) I
			\end{array}
		\right]	
	\right\|_2	\\[.4em]
			&	\leq	\;
	\frac{1}{| y^\ast T'(\mu_0) x |} 
	\left\|
		\left[
			\begin{array}{ccc}
				\Delta T_1		&	\dots		&	\Delta T_\kappa
			\end{array}
		\right]
	\right\|_2
	\sqrt{w_1^2 | t_1(\mu_0) |^2 +  \dots  + w_\kappa^2 | t_\kappa(\mu_0) |^2}			\\[.4em]
			&	\leq	\;
	\frac{1}{| y^\ast T'(\mu_0) x |} 
	\sqrt{w_1^2 | t_1(\mu_0) |^2 +  \dots  + w_\kappa^2 | t_\kappa(\mu_0) |^2}
\end{split}
\end{equation*}
for all $\Delta = ( \Delta T_1 , \dots , \Delta T_\kappa ) \in {\mathcal S}$. Let us consider the specific
perturbations
\begin{equation}\label{eq:opt_perturbation}
	\underline{\Delta T}_{\, j}
			\;	=	\;
	\frac{w_j \, \overline{t_j(\mu_0)} y x^\ast}{\sqrt{w_1^2 | t_1(\mu_0) |^2 +  \dots  + w_\kappa^2 | t_\kappa(\mu_0) |^2}}	\;	,	\;\;\;	j	=	1,	\dots , \kappa \: ,
\end{equation}
which satisfies
\begin{equation*}
\begin{split}
	&
		\left\|
		\left[
			\begin{array}{ccc}
				\underline{\Delta T}_{\, 1}	
							&	\dots		&	
				\underline{\Delta T}_{\, \kappa}
			\end{array}
		\right]
		\right\|_2	=	1	\;	,		\quad	\text{and}		\\[.45em]
	&
	\;\;
	\underline{\Delta T}(\mu_0)	\;  := \;	
	t_1(\mu_0) w_1 \underline{\Delta T}_{\, 1}	
			+	\dots		+	t_\kappa(\mu_0) w_\kappa \underline{\Delta T}_{\, \kappa}	\\
	&
	\phantom{\;\;
	\underline{\Delta T}(\mu_0)  :}
				= \;
	\left\{  \sqrt{w_1^2 | t_1(\mu_0) |^2 +  \dots  + w_\kappa^2 | t_\kappa(\mu_0) |^2}  \right\} \, y x^\ast	\:  .
\end{split}
\end{equation*}
For this perturbation, 
setting $\underline{\Delta} 
		:= ( \underline{\Delta T}_{\, 1} , \dots , \underline{\Delta T}_{\, \kappa}) \in {\mathcal S}$,
we have
\begin{equation*}
\begin{split}
	{\mathcal L}'_{\underline{\Delta}}(0)
			\,	=	\,
	- \text{Re}	\left\{	   	\frac{ y^\ast \underline{\Delta T} (\mu_0) x}{y^\ast T'(\mu_0) x}	\right\}
			\,	&	=	\,
	-\left\{ \sqrt{w_1^2 | t_1(\mu_0) |^2 +  \dots  + w_\kappa^2 | t_\kappa(\mu_0) |^2} \right\} \text{Re} \left\{ \frac{1}{y^\ast T'(\mu_0) x} \right\}	\\[.2em]
		&	=	\,
	\frac{1}{| y^\ast T'(\mu_0) x |}  \sqrt{w_1^2 | t_1(\mu_0) |^2 +  \dots  + w_\kappa^2 | t_\kappa(\mu_0) |^2}   \:	,
\end{split}
\end{equation*}
where in the last equality we employ that $y^\ast T'(\mu_0) x$ is real and negative
so that $-\text{Re}\{y^\ast T'(\mu_0) x\} = | y^\ast T'(\mu_0) x |$.


Hence, we arrive at the following result.
\begin{theorem}\label{thm:max_LDelta}
Suppose $\mu_0 \in {\mathcal D}$ is a simple and isolated eigenvalue of $T(\lambda)$.
Then the following holds:
\vskip -.5ex
\begin{equation}\label{eq:exp_der}
	\max_{\Delta \in {\mathcal S}}	\: {\mathcal L}'_\Delta(0)	
			\;	=	\;
	\frac{1}{| y^\ast T'(\mu_0) x |}   
	\sqrt{w_1^2 | t_1(\mu_0) |^2 +  \dots  + w_\kappa^2 | t_\kappa(\mu_0) |^2} 	\:	.
\end{equation}
\vskip 1ex
\noindent
Moreover, 
$\max_{\Delta \in {\mathcal S}}	\: {\mathcal L}'_\Delta(0) = 
				\: {\mathcal L}'_{\underline{\Delta}}(0)$, where 
$\underline{\Delta} = ( \underline{\Delta T}_{\, 1} , 
	\dots , \underline{\Delta T}_{\, \kappa}) \in {\mathcal S}$	
for $\underline{\Delta T}_{\, 1} , \dots ,$ $\underline{\Delta T}_{\, \kappa}$ as in (\ref{eq:opt_perturbation})
with $x, y$ denoting unit right, left unit eigenvectors of $T$ corresponding to $\mu_0$ normalized
so that $y^\ast T'(\mu_0) x$ is real and negative.
\end{theorem}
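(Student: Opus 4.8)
The plan is to prove Theorem~\ref{thm:max_LDelta} by a standard ``upper bound plus matching construction'' argument, most of which has already been set up in the preceding paragraphs. First I would establish the inequality
\[
	{\mathcal L}'_\Delta(0)
		\;	\leq	\;
	\frac{1}{| y^\ast T'(\mu_0) x |}
	\sqrt{w_1^2 | t_1(\mu_0) |^2 +  \dots  + w_\kappa^2 | t_\kappa(\mu_0) |^2}
\]
for every $\Delta = ( \Delta T_1 , \dots , \Delta T_\kappa ) \in {\mathcal S}$. This is exactly the chain of three inequalities displayed just before (\ref{eq:opt_perturbation}): starting from the formula (\ref{eq:derL}), bound $\text{Re}\{\cdot\} \le |\cdot|$, then factor $\Delta T(\mu_0)$ as a product of the block row $[\,\Delta T_1 \ \cdots \ \Delta T_\kappa\,]$ times a block column of scaled identity blocks, apply submultiplicativity of the 2-norm, compute the 2-norm of the block column as $\sqrt{w_1^2 | t_1(\mu_0) |^2 +  \dots  + w_\kappa^2 | t_\kappa(\mu_0) |^2}$, and finally use $\|[\,\Delta T_1 \ \cdots \ \Delta T_\kappa\,]\|_2 \le 1$ from the definition of ${\mathcal S}$. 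Here the normalization $\|x\|_2 = \|y\|_2 = 1$ is used so that $\|y x^\ast\|_2 = 1$ and $|y^\ast \Delta T(\mu_0) x| \le \|\Delta T(\mu_0)\|_2$. This shows the right-hand side of (\ref{eq:exp_der}) is an upper bound for $\max_{\Delta \in {\mathcal S}} {\mathcal L}'_\Delta(0)$.

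Next I would verify that the candidate perturbation $\underline{\Delta} = ( \underline{\Delta T}_{\, 1} , \dots , \underline{\Delta T}_{\, \kappa})$ defined in (\ref{eq:opt_perturbation}) lies in ${\mathcal S}$ and attains the bound. For membership in ${\mathcal S}$: the block row $[\,\underline{\Delta T}_{\, 1} \ \cdots \ \underline{\Delta T}_{\, \kappa}\,]$ equals $y$ times the row vector whose $j$-th entry is $w_j \overline{t_j(\mu_0)} / \sqrt{\sum_\ell w_\ell^2 |t_\ell(\mu_0)|^2}$ times $x^\ast$, i.e.\ it is a rank-one matrix $y\, c^\ast \otimes$-style object whose 2-norm is $\|y\|_2$ times the Euclidean norm of that coefficient row times $\|x\|_2 = 1 \cdot 1 \cdot 1 = 1$; this is the first displayed identity after (\ref{eq:opt_perturbation}). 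Then I would substitute into $\underline{\Delta T}(\mu_0) = \sum_j t_j(\mu_0) w_j \underline{\Delta T}_{\, j}$ and collect terms to get $\underline{\Delta T}(\mu_0) = \big(\sum_j w_j^2 |t_j(\mu_0)|^2 / \sqrt{\sum_\ell w_\ell^2 |t_\ell(\mu_0)|^2}\big) y x^\ast = \sqrt{\sum_j w_j^2 |t_j(\mu_0)|^2}\, y x^\ast$, the second displayed identity. Feeding this into (\ref{eq:derL}) and using $y^\ast y = 1$, $x^\ast x = 1$ gives ${\mathcal L}'_{\underline\Delta}(0) = -\sqrt{\sum_j w_j^2 |t_j(\mu_0)|^2}\,\text{Re}\{1/(y^\ast T'(\mu_0)x)\}$, and since $y^\ast T'(\mu_0) x$ is real and negative by the chosen normalization of $x$, we have $\text{Re}\{1/(y^\ast T'(\mu_0)x)\} = 1/(y^\ast T'(\mu_0)x) = -1/|y^\ast T'(\mu_0)x|$, so ${\mathcal L}'_{\underline\Delta}(0) = \sqrt{\sum_j w_j^2 |t_j(\mu_0)|^2}/|y^\ast T'(\mu_0)x|$, matching the upper bound.

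Combining the two halves, the supremum over ${\mathcal S}$ is both bounded above by and achieved at the right-hand side of (\ref{eq:exp_der}), which proves the equality and simultaneously identifies $\underline{\Delta}$ as a maximizer. I would also remark that the ``without loss of generality'' reductions ($\|x\|_2 = \|y\|_2 = 1$ and $y^\ast T'(\mu_0)x$ real negative, achieved by the rotation $\widetilde x = -x e^{-{\rm i}\theta}$ described in the text) are harmless: scaling eigenvectors does not change the value of ${\mathcal L}'_\Delta(0)$ in (\ref{eq:derL}) since the same eigenvector appears in numerator and denominator, and (\ref{eq:exp_der}) is likewise invariant under such rescaling. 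There is no real obstacle here — the only mild care needed is bookkeeping in the rank-one 2-norm computations and checking that the standing assumption $\sum_j w_j |t_j(\mu_0)| \neq 0$ guarantees the square root in the denominators of (\ref{eq:opt_perturbation}) is nonzero, so $\underline{\Delta}$ is well defined.
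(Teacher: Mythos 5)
Your proposal is correct and follows essentially the same route as the paper: the paper's "proof" of Theorem \ref{thm:max_LDelta} is exactly the chain of inequalities and the verification for $\underline{\Delta}$ carried out in the subsection immediately preceding the theorem statement, which you reproduce faithfully, including the normalization making $y^\ast T'(\mu_0)x$ real and negative and the rank-one norm computations.
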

By combining (\ref{eq:exp_der}) with (\ref{eq:defR}), we also
deduce the following result, which is helpful for the estimation of ${\mathcal R}(\epsilon; \mu_0)$.
\begin{corollary}
Suppose that $\mu_0 \in {\mathcal D}$ is a simple, isolated eigenvalue of $T$, and
$\epsilon > 0$ is sufficiently small so that $[0, \epsilon] \subset U$ 
for $U$ as in Theorem \ref{thm:eig_well_posed}. Then, we have
\begin{equation}\label{eq:R_approx}
	{\mathcal R}(\epsilon; \mu_0)
		\;	=	\;
	\mathrm{Re}(\mu_0)		+
	\epsilon	\left\{	  \frac{ \sqrt{w_1^2 | t_1(\mu_0) |^2 +  \dots  + w_\kappa^2 | t_\kappa(\mu_0) |^2} }{| y^\ast T'(\mu_0) x |} 	\right\}		+
	{\mathcal O}(\epsilon^2)	\:	
\end{equation}
with $x$, $y$ denoting a unit right eigenvector, a unit left eigenvector, respectively, of $T$
corresponding to the eigenvalue $\mu_0$.
\end{corollary}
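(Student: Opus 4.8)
The plan is to obtain (\ref{eq:R_approx}) by inserting the value of $\max_{\Delta \in {\mathcal S}} {\mathcal L}'_\Delta(0)$ furnished by Theorem \ref{thm:max_LDelta} into the expansion (\ref{eq:defR}). Indeed, (\ref{eq:defR}) already records that
\[
	{\mathcal R}(\epsilon; \mu_0)
	= \text{Re}(\mu_0) + \epsilon \left\{ \max_{\Delta \in {\mathcal S}} {\mathcal L}'_\Delta(0) \right\} + {\mathcal O}(\epsilon^2),
\]
using ${\mathcal L}_\Delta(0) = \text{Re}\{\mu(0;\Delta)\} = \text{Re}(\mu_0)$, and Theorem \ref{thm:max_LDelta} supplies $\max_{\Delta \in {\mathcal S}} {\mathcal L}'_\Delta(0) = \sqrt{w_1^2|t_1(\mu_0)|^2 + \dots + w_\kappa^2|t_\kappa(\mu_0)|^2}\,/\,|y^\ast T'(\mu_0)x|$, so the claimed identity follows verbatim. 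I would also remark that although Theorem \ref{thm:max_LDelta} attains the maximum with $x,y$ normalized so that $y^\ast T'(\mu_0) x$ is real and negative, the right-hand side of (\ref{eq:R_approx}) involves only $|y^\ast T'(\mu_0) x|$, which is invariant under multiplication of $x$ or $y$ by a unimodular scalar; hence the statement is legitimately phrased for arbitrary unit right and left eigenvectors.

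The one point deserving care is the passage from the pointwise-in-$\Delta$ Taylor expansion (\ref{eq:L_expand}) to the ``${\mathcal O}(\epsilon^2)$ uniformly in $\Delta$'' implicit in (\ref{eq:defR}): taking $\max_{\Delta \in {\mathcal S}}$ of both sides of (\ref{eq:L_expand}) is legitimate only if the remainder in (\ref{eq:L_expand}) is bounded by $C\epsilon^2$ with a constant $C$ independent of $\Delta$. To establish this I would view $g(\eta,\lambda,\Delta) = \det((T + \eta\Delta T)(\lambda))$ as an analytic function jointly of $\eta$, $\lambda$, and the entries of $\Delta = (\Delta T_1,\dots,\Delta T_\kappa)$, and apply the analytic implicit function theorem exactly as in the paragraph following Theorem \ref{thm:eig_well_posed}, but now in these joint variables; this yields an eigenvalue branch $(\eta,\Delta) \mapsto \mu(\eta;\Delta)$ that is (real-)analytic on a neighborhood of $[0,\epsilon]\times{\mathcal S}$, so ${\mathcal L}_\Delta(\eta) = \text{Re}\{\mu(\eta;\Delta)\}$ has a second $\eta$-derivative $\partial_\eta^2 {\mathcal L}_\Delta(\eta)$ continuous there. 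By compactness of $[0,\epsilon]\times{\mathcal S}$ one gets $|\partial_\eta^2 {\mathcal L}_\Delta(\eta)| \leq C$ uniformly, and Taylor's theorem with the Lagrange remainder gives $|{\mathcal L}_\Delta(\epsilon) - \text{Re}(\mu_0) - {\mathcal L}'_\Delta(0)\epsilon| \leq (C/2)\epsilon^2$ for every $\Delta \in {\mathcal S}$; maximizing over $\Delta$ then reproduces (\ref{eq:defR}) precisely.

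Consequently, the only genuine obstacle is this uniformity step — checking that the implicit-function-theorem construction of the eigenvalue branch proceeds with constants uniform over the compact perturbation set ${\mathcal S}$, so that $\max_\Delta$ commutes with the first-order expansion up to a uniform ${\mathcal O}(\epsilon^2)$ error. If instead (\ref{eq:defR}) is regarded as already proved in the preceding text, then the corollary reduces to a one-line substitution and nothing further is required.
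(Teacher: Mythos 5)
Your proof is correct and is essentially the paper's own argument: the corollary is obtained by substituting the value of $\max_{\Delta\in{\mathcal S}}{\mathcal L}'_\Delta(0)$ from Theorem \ref{thm:max_LDelta} into the expansion (\ref{eq:defR}), and the paper gives no further proof beyond this substitution. Your additional care about the uniformity in $\Delta$ of the ${\mathcal O}(\epsilon^2)$ remainder addresses a point the paper leaves implicit when passing from (\ref{eq:L_expand}) to (\ref{eq:defR}), and your joint-analyticity-plus-compactness argument for that uniformity is sound.
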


\subsection{Estimation of the $\epsilon$-pseudospectral abscissa}\label{sec:nle_psa}
For a prescribed real number $\epsilon > 0$, 
we define the $\epsilon$-pseudospectrum of $T$ by
\begin{equation}\label{eq:defn_NEP_pspec}
 \begin{split}
	&
	\Lambda_{\epsilon}(T)
		\;	:=	\;
	\bigcup_{\Delta T \in {\mathcal P}_\epsilon} \Lambda(T + \Delta T)	\:	,	\quad \text{where}		 \\
	&
	{\mathcal P}_\epsilon
		\;	=	\;
	\left\{
			\Delta T(\lambda)  =  t_1(\lambda) w_1 \Delta T_1	
						+	\dots 	+	t_\kappa(\lambda) w_\kappa \Delta T_\kappa
			\;\:	|	\;\:	(\Delta T_1 , \dots , \Delta T_{\kappa})	\in {\mathcal S}_\epsilon
	\right\}		,
 \end{split}
\end{equation}
the notation 
$\Lambda(T + \Delta T)$ is for the set of finite eigenvalues of the matrix-valued
function $(T + \Delta T)(\lambda)$ and
$
	{\mathcal S}_{\epsilon}	:=	
	\{
		 (\Delta T_1 , \dots , \Delta T_{\kappa})	\;\:	|	\;\:
		 \left\|
		\left[
			\begin{array}{ccc}
				\Delta T_1		&	\dots		&	\Delta T_{\kappa}
			\end{array}
		\right]	
		\right\|_2	\leq	\epsilon	
	\}	\:	
$.
There are slight variations of this definition of the $\epsilon$-pseudospectrum 
for a matrix-valued function considered in the literature \cite{TisH01, TisM01, MicGWN06, MicG12, MeeMMV17}. 
They differ in a minor way only by the way ${\mathcal P}_\epsilon$ in (\ref{eq:defn_NEP_pspec}) 
is defined, in particular the choice of the norm in the definition of ${\mathcal S}_\epsilon$.
The derivations throughout this section can be adapted to deal with such slight variations 
in a straightforward manner
by modifying ${\mathcal S}$ in the definition of ${\mathcal R}(\epsilon; \mu_0)$ in (\ref{eq:defn_Reps})
accordingly.


Our derivations in this subsection rely on the following assumption.
\begin{assumption}\label{ass:psa_nlevp}
Throughout, we assume that the matrix-valued function $T$ and $\epsilon$ are such that the
suprema of
\begin{center}
\textbf{(i)}  
	 $\{ \mathrm{Re}(z)	\;	|	\;	z \in \Lambda_{\epsilon}(T)	\} \;$ 	$\;\;$		and	$\;\;\,$	
\textbf{(ii)} 
	$\, \{ {\mathcal R}(\epsilon ; \mu_0) \; | \; \mu_0 \in \Lambda(T)	\}$
\end{center}
are attained.
\end{assumption}
\noindent
We remark that part (ii) of Assumption \ref{ass:psa_nlevp} is trivially satisfied if $\Lambda(T)$ is finite, 
which for instance is the case for any regular matrix polynomial.
The $\epsilon$-pseudospectral abscissa $\alpha_{\epsilon}(T)$ of $T$ 
is the real part of the rightmost point in $\Lambda_{\epsilon}(T)$, i.e.,
\[
	\alpha_{\epsilon}(T)
		\;	:=	\;
	\max \: \{ \text{Re}(z)	\;	|	\;	z \in \Lambda_{\epsilon}(T)	\}	\:	.
\]
The next result suggests a way to estimate $\alpha_{\epsilon}(T)$ using
eigenvalue perturbation theory.
\begin{theorem}\label{thm:fo_estimate}
Suppose that Assumption \ref{ass:psa_nlevp} holds, every $\mu_0 \in \Lambda(T)$ is simple
and isolated, and $\epsilon$ is sufficiently small so that $[0, \epsilon] \subset U$ for $U$
as in Theorem \ref{thm:eig_well_posed} for every $\mu_0 \in \Lambda(T)$. 
Then
\begin{equation}\label{eq:main_char_psa_nle}
\begin{split}
	\alpha_{\epsilon}(T) \; & = \;  \max \{ {\mathcal R}(\epsilon ; \mu_0) \; | \; \mu_0 \in \Lambda(T)	\} \\
		 	&	=	\; 
						\max_{\mu_0 \in \Lambda(T)}
	\left\{
		\mathrm{Re}(\mu_0)		+
		\epsilon	\left\{	  
					\frac{ \sqrt{w_1^2 | t_1(\mu_0) |^2 +  \dots  + w_\kappa^2 | t_\kappa(\mu_0) |^2}}
						{| y_{\mu_0}^\ast T'(\mu_0) x_{\mu_0} |} 	
				\right\}		
	\right\}
							\:	+	\:
						{\mathcal O}(\epsilon^2), \\[.2em]
\end{split}
\end{equation}
with $x_{\mu_0}$, $y_{\mu_0}$ denoting a unit right eigenvector, a unit left eigenvector, respectively,
corresponding to the eigenvalue $\mu_0$ of $T$.
\end{theorem}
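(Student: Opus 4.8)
The plan is to prove the two equalities in (\ref{eq:main_char_psa_nle}) by establishing the first one, namely $\alpha_\epsilon(T) = \max\{\mathcal{R}(\epsilon;\mu_0) \mid \mu_0 \in \Lambda(T)\}$, and then invoking the Corollary after (\ref{eq:R_approx}) termwise to convert each $\mathcal{R}(\epsilon;\mu_0)$ into its first-order expansion. The second equality then follows immediately from the first, since we are taking a maximum over the finite (or, by Assumption \ref{ass:psa_nlevp}(ii), supremum-attaining) set $\Lambda(T)$ of quantities each of the form $\mathrm{Re}(\mu_0) + \epsilon(\cdots) + \mathcal{O}(\epsilon^2)$; the only subtlety there is that the $\mathcal{O}(\epsilon^2)$ terms must be uniform across $\mu_0 \in \Lambda(T)$, which I would address by noting that $\Lambda(T)$ is assumed finite in the main applications, or more carefully that the constants in Theorem \ref{thm:eig_well_posed} can be taken uniformly over the (compact) spectrum under the standing isolatedness/simplicity hypotheses.

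For the first equality I would argue by two inequalities. For ``$\geq$'': fix any $\mu_0 \in \Lambda(T)$ and any $\Delta \in \mathcal{S}$; then $\epsilon\Delta \in \mathcal{S}_\epsilon$, and the eigenvalue $\mu(\epsilon;\Delta)$ of $(T + \epsilon\Delta T)(\lambda)$ lies in $\Lambda_\epsilon(T)$ by definition (\ref{eq:defn_NEP_pspec}), hence $\mathrm{Re}\{\mu(\epsilon;\Delta)\} \leq \alpha_\epsilon(T)$; taking the maximum over $\Delta \in \mathcal{S}$ gives $\mathcal{R}(\epsilon;\mu_0) \leq \alpha_\epsilon(T)$, and then the maximum over $\mu_0$. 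For ``$\leq$'': let $z^\star \in \Lambda_\epsilon(T)$ attain $\alpha_\epsilon(T)$ (Assumption \ref{ass:psa_nlevp}(i)), so $z^\star \in \Lambda(T + \Delta T)$ for some $\Delta T$ associated with $\Delta^\star \in \mathcal{S}_\epsilon$. Write $\Delta^\star = \epsilon \widehat{\Delta}$ with $\widehat{\Delta} \in \mathcal{S}$ (if $\|[\Delta T_1^\star \cdots \Delta T_\kappa^\star]\|_2 < \epsilon$ one can still scale; the boundary case is the relevant one). The key point is to identify $z^\star$ as $\mu(\epsilon;\widehat{\Delta};\mu_0)$ for some $\mu_0 \in \Lambda(T)$ — i.e., to trace $z^\star$ back along the homotopy $\eta \mapsto (T+\eta\widehat{\Delta}T)$ from $\eta=\epsilon$ down to $\eta=0$ and land on an unperturbed eigenvalue. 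Once this is done, $\mathrm{Re}(z^\star) = \mathcal{L}_{\widehat\Delta}(\epsilon) \leq \mathcal{R}(\epsilon;\mu_0) \leq \max_{\mu_0}\mathcal{R}(\epsilon;\mu_0)$, completing the inequality.

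The main obstacle is precisely the ``tracing back'' step in the ``$\leq$'' direction: a priori an eigenvalue $z^\star$ of a perturbed function need not be connected to $\Lambda(T)$ by the particular one-parameter family $\eta\widehat\Delta$, nor is it obvious that $z^\star$ lands on a \emph{simple, isolated} eigenvalue where Theorem \ref{thm:eig_well_posed} applies. I would handle this by exploiting the smallness of $\epsilon$ together with Theorem \ref{thm:eig_well_posed}: for each $\mu_0 \in \Lambda(T)$ the balls $B_r(\mu_0)$ can be chosen disjoint, and for $\epsilon \in U$ every eigenvalue of $(T+\epsilon\widehat\Delta T)$ lying in $\cup_{\mu_0} B_r(\mu_0)$ is exactly one of the functions $\mu(\epsilon;\widehat\Delta;\mu_0)$; one must also argue that for $\epsilon$ small enough the rightmost pseudospectral point $z^\star$ cannot escape $\cup_{\mu_0}B_r(\mu_0)$ (it cannot drift far from $\Lambda(T)$, which follows since $\Lambda_\epsilon(T) \to \Lambda(T)$ as $\epsilon \to 0$ in an appropriate sense, or by a direct Rouché argument on a large contour enclosing all of $\overline{B}_r(\mu_0)$, $\mu_0 \in \Lambda(T)$). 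With $z^\star \in B_r(\mu_0)$ for some $\mu_0$ secured, the identification $z^\star = \mu(\epsilon;\widehat\Delta;\mu_0)$ is forced by the uniqueness clause of Theorem \ref{thm:eig_well_posed}, and the rest is bookkeeping. Finally, for the termwise asymptotic expansion I would simply cite (\ref{eq:R_approx}) for each $\mu_0$, noting once more the uniformity of the error over the finite spectrum.
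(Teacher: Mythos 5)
Your proposal follows essentially the same route as the paper's proof: the first equality is established by the same two inequalities (rescaling perturbations between ${\mathcal S}$ and ${\mathcal S}_\epsilon$ and using the eigenvalue functions $\mu(\eta;\Delta,\mu_0)$ to move between $\Lambda_{\epsilon}(T)$ and the quantities ${\mathcal R}(\epsilon;\mu_0)$), and the second equality is obtained by applying (\ref{eq:R_approx}) termwise. The one place you go beyond the paper is in flagging and sketching a justification for the "tracing back" step identifying the rightmost point $z_\ast$ with some $\mu(\epsilon;\widehat{\Delta},\mu_0)$, which the paper's proof simply asserts, and in noting the need for uniformity of the ${\mathcal O}(\epsilon^2)$ terms over $\Lambda(T)$; both are legitimate refinements rather than deviations.
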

\begin{proof}
We start by proving the first equality in (\ref{eq:main_char_psa_nle}).
Let ${\mathcal R}(\epsilon) :=  \max \{ {\mathcal R}(\epsilon ; \mu_0) \; | \; \mu_0 \in \Lambda(T)	\}$.
We first show $\alpha_{\epsilon}(T) \leq {\mathcal R}(\epsilon)$. To this end, let
$z_\ast \in {\mathbb C}$ be such that (1) $z_\ast \in \Lambda_{\epsilon}(T)$ and $\text{Re}(z_\ast) = \alpha_{\epsilon}(T)$. 
Moreover, by the definition of $\Lambda_{\epsilon}(T)$, there must be 
$(\underline{\Delta T}_{\, 1} , \dots , \underline{\Delta T}_{\, \kappa}) \in \mathcal{S}_\epsilon$ 
such that $z_\ast \in \Lambda_{\epsilon}(T + \underline{\Delta T})$ for 
$\underline{\Delta T}(\lambda) = t_1(\lambda) w_1 \underline{\Delta T}_{\, 1} 
						+ 	\dots 	+ 
				    t_\kappa w_{\kappa} \underline{\Delta T}_{\, \kappa}$.
Let 
\[
	\underline{\widehat{\Delta}}  \; = \;
	(\underline{\widehat{\Delta T}}_{\, 1} , \dots , \underline{\widehat{\Delta T}}_{\, \kappa})
		\; := \;
	\frac{1}{  \epsilon }
	(\underline{\Delta T}_{\, 1} , \dots , \underline{\Delta T}_{\, \kappa})
\]
so that $\underline{\widehat{\Delta}} \in {\mathcal S}$, as 
$(\underline{\Delta T}_{\, 1} , \dots , \underline{\Delta T}_{\, \kappa}) \in \mathcal{S}_\epsilon$ 
or equivalently $\left\|
		\left[
			\begin{array}{ccc}
				\underline{\Delta T}_1		&	\dots		&	\underline{\Delta T}_{\kappa}
			\end{array}
		\right]	
		\right\|_2 \leq \epsilon$. 
There must be an analytic eigenvalue function
$\mu(\eta; \underline{\widehat{\Delta}})$ as defined in the opening of Section \ref{sec:fo_nlevp} 
corresponding to an eigenvalue of 
$(T + \eta \underline{\widehat{\Delta T}})(\lambda)$ with 
$\underline{\widehat{\Delta T}}(\lambda) = t_1(\lambda) w_1 \widehat{\underline{\Delta T}}_{\, 1} + \dots + w_\kappa \widehat{\underline{\Delta T}}_{\, \kappa}$
such that $\mu(\epsilon; \widehat{\underline{\Delta}}) = z_\ast$. 
Letting $\mu_\ast := \mu(0 ; \widehat{\underline{\Delta}})$,
which must be an eigenvalue of the unperturbed polynomial $T(\lambda)$, we have
\begin{equation*}
\begin{split}
	{\mathcal R}(\epsilon)
		\;	&	\geq		\;
	{\mathcal R}(\epsilon ; \mu_\ast)
		=
	\max_{\Delta \in {\mathcal S}}	\:\:	\text{Re} \left\{ \mu(\epsilon ; \Delta , \mu_\ast) \right\}		\\
		\;	&	\geq		\;
	\text{Re} \left\{ \mu(\epsilon ; \widehat{\underline{\Delta}} , \mu_\ast) \right\}
		\;	=	\;
	\text{Re}(z_\ast)
		\;	=	\;
	\alpha_{\epsilon}(T)	\:	
\end{split}
\end{equation*}
as claimed.


To prove $\alpha_{\epsilon}(T) \geq {\mathcal R}(\epsilon)$, let $\mu_\ast \in \Lambda(T)$ be
such that ${\mathcal R}(\epsilon) = {\mathcal R}(\epsilon ; \mu_\ast)$. 
By the compactness of ${\mathcal S}$, there must be $\underline{\Delta} \in {\mathcal S}$ such that 
${\mathcal R}(\epsilon; \mu_\ast) = \text{Re}\{ \mu(\epsilon ; \underline{\Delta} , \mu_\ast) \}$.
But then, letting $\underline{\Delta} = (\underline{\Delta T}_{\, 1} , \dots , \underline{\Delta T}_{\, \kappa})$ and 
$\underline{\Delta T}(\lambda) = 
t_1(\lambda) w_1 \underline{\Delta T}_{\, 1} + \dots + t_{\kappa}(\lambda) w_\kappa \underline{\Delta T}_\kappa$,
we have $\epsilon (\underline{\Delta T}_{\, 1} , \dots , \underline{\Delta T}_{\, \kappa}) \in {\mathcal S}_\epsilon$, 
so, by the definition of $\Lambda_{\epsilon}(T)$, it follows that 
\begin{equation*}
\begin{split}
	\Lambda(T + \epsilon \underline{\Delta T})
		\subseteq
	\Lambda_{\epsilon}(T)
		\;\;	&	\Longrightarrow		\;\;
	\mu(\epsilon ; \underline{\Delta}, \mu_\ast)	\in \Lambda_{\epsilon}(T)		\\
		\;\;	&	\Longrightarrow		\;\;
	{\mathcal R}(\epsilon) = \text{Re}\{ \mu(\epsilon ; \underline{\Delta} , \mu_\ast) \}
		\;	\leq	\;		\alpha_{\epsilon}(T)	\:	,
\end{split}		
\end{equation*}
where the first implication is due to 
$\mu(\epsilon ; \underline{\Delta} , \mu_\ast) \in \Lambda(T + \epsilon \underline{\Delta T})$
(i.e.,  $\mu(\epsilon ; \underline{\Delta} , \mu_\ast)$ is an eigenvalue of 
$(T + \epsilon \underline{\Delta T})(\lambda)$), completing the proof of 
the first equality in (\ref{eq:main_char_psa_nle}).


The second equality in (\ref{eq:main_char_psa_nle}) follows immediately from (\ref{eq:R_approx}).
\end{proof}

\begin{example}\label{ex:damp1}
\rm{For a regular quadratic matrix polynomial $P(\lambda) = \lambda^2 M + \lambda C + K$
with the prescribed perturbation weights $w_m := w_1$, $w_c := w_2$, $w_k := w_3$
in the definition of $\Lambda_{\epsilon}(P)$, Theorem \ref{thm:fo_estimate} yields
(under the simplicity assumption on the eigenvalues of $(P + \Delta P)(\lambda)$
for all perturbations $\Delta P \in {\mathcal P}_{\epsilon}$, and under the 
assumption that the rightmost point in $\Lambda_{\epsilon}(P)$ is attained)
\[
	\alpha_{\epsilon}(P)
		\;	=	\;
	\max_{\mu_0 \in \Lambda(P)}
						\left\{
							\mathrm{Re}(\mu_0)		+
	\epsilon	
	\left\{	  \frac{ \sqrt{w_m^2 |\mu_0|^4 + w_c^2 |\mu_0|^2 + w_k^2}}{\: | y_{\mu_0}^\ast P'(\mu_0) x_{\mu_0} |} 	\right\}	
						\right\}
								+	{\mathcal O}(\epsilon^2)	\:	.
\] 

Let us specifically consider the parameter-dependent quadratic matrix polynomial example 
\begin{equation}\label{eq:damping_ex_20by20}
\begin{split}
	&	P(\lambda ; \nu)	=	\lambda^2 M + \lambda C(\nu) + K	\\[.3em]
	&	\text{with}	\;\;	
	M
		=
	\mathrm{diag}(     
	1,\dots, 20)	\:	,	\;\;
	C(\nu)
		=
	C_{\mathrm{int}}	\,	+	\,	\nu e_2 e_2^T	\:	,	\;\;
	K
		=
	\mathrm{tridiag}(-25,50,-25) \, ,
\end{split}
\end{equation}
$C_{\mathrm{int}} \, = \, 2\xi  M^{1/2} \sqrt{M^{-1/2} K M^{-1/2}} M^{1/2}$, 
$\xi = 0.005$ from \cite[Example 5.2]{MehM24}, and
with the weights $w_m = w_c = w_k = 1$.
The example concerns a mass-spring-damper system consisting of twenty masses
tied together through springs and with a damper on the second mass. The parameter $\nu$ 
is a nonnegative real number corresponding to the viscosity of the damper.
Let 
\[
	{\mathcal R}_{\epsilon}(P(\cdot ; \nu)) := \max_{\mu_0 \in \Lambda(P(\cdot ; \nu))}
						\left\{
							\mathrm{Re}(\mu_0)		+
		\epsilon	\left\{	  \frac{ \sqrt{|\mu_0|^4 + |\mu_0|^2 + 1}}{\: | y_{\mu_0}^\ast P'(\mu_0 ; \nu) x_{\mu_0} |} 	\right\}	
						\right\}
\]
be the estimate for the $\epsilon$-pseudospectral abscissa $\alpha_{\epsilon}(P(\cdot ; \nu))$
in (\ref{eq:main_char_psa_nle}) in Theorem \ref{thm:fo_estimate}, which must satisfy
$\alpha_{\epsilon}(P(\cdot ; \nu)) = {\mathcal R}_{\epsilon}(P(\cdot ; \nu)) + {\mathcal O}(\epsilon^2)$
for every $\nu$, under simplicity and attainment of rightmost point in $\Lambda_{\epsilon}(P(\cdot ; \nu))$
assumptions.


The plots of $\alpha_{\epsilon}(P(\cdot ; \nu))$ and ${\mathcal R}_{\epsilon}(P(\cdot ; \nu))$
for $\epsilon = 0.1$ as a function of $\nu \in [0, 100]$ are shown on the left in Figure \ref{fig:error}. 
It appears that ${\mathcal R}_{\epsilon}(P(\cdot ; \nu))$ approximates $\alpha_{\epsilon}(P(\cdot ; \nu))$
extremely well. Also, looking at the plots on the right in Figure \ref{fig:error},
that is, the plots of the errors 
$| \alpha_{\epsilon}(P(\cdot ; \nu)) - {\mathcal R}_{\epsilon}(P(\cdot ; \nu)) |$
as a function of $\nu \in [0,100]$ for $\epsilon = 0.025, 0.05, 0.1, 0.2$, the error
$| \alpha_{\epsilon}(P(\cdot ; \nu)) - {\mathcal R}_{\epsilon}(P(\cdot ; \nu)) |$ appears to be 
proportional to $\epsilon^2$ as asserted by Theorem \ref{thm:fo_estimate}.
\begin{figure}
	\begin{tabular}{cc}
		\hskip -3.5ex
			\includegraphics[width = .525\textwidth]{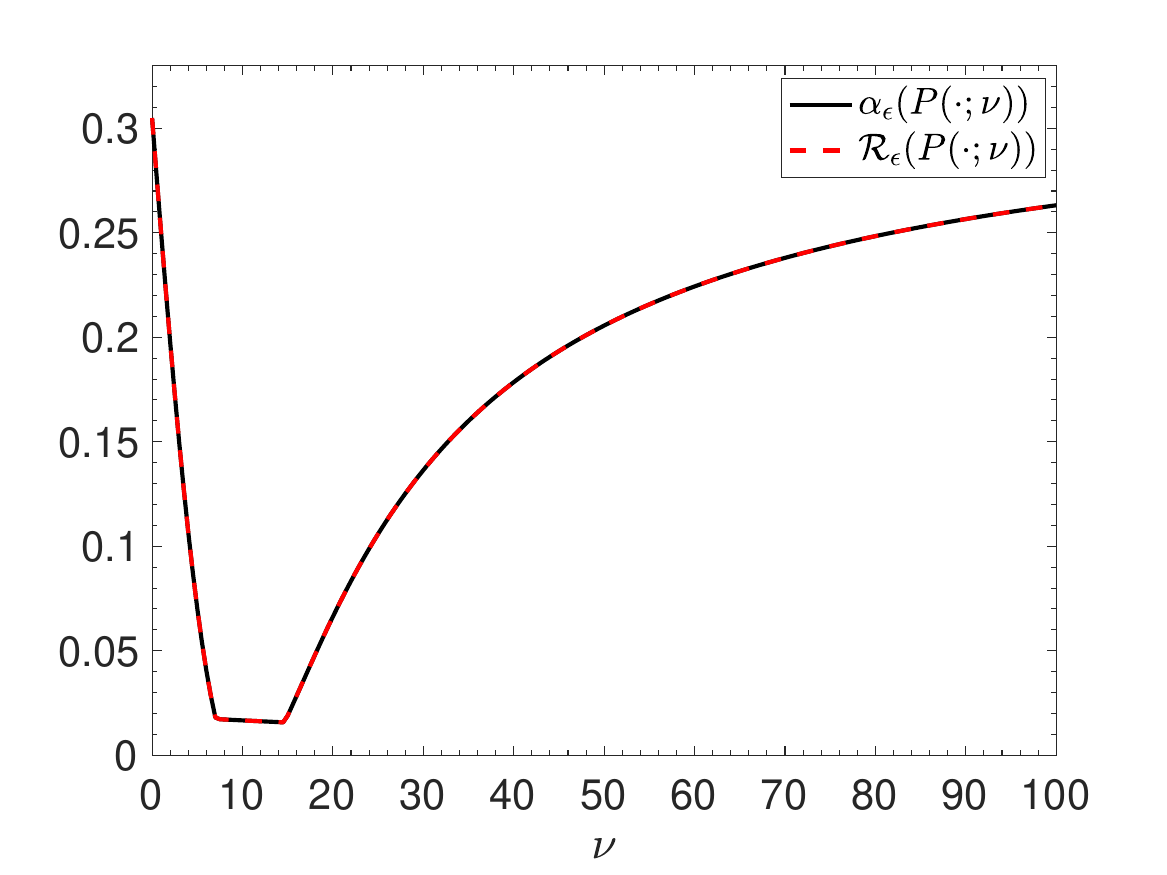} & 
			\hskip -1.5ex
			\includegraphics[width = .52\textwidth]{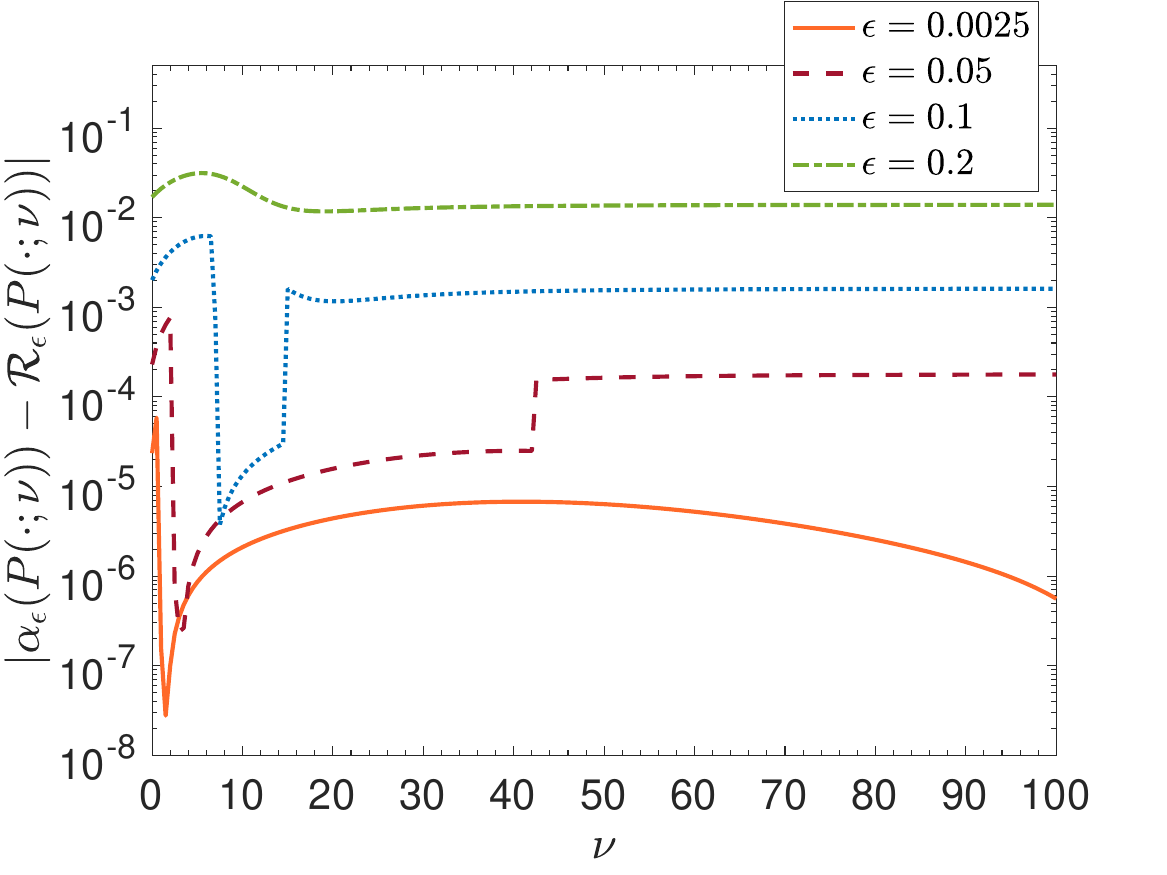} 
	\end{tabular}
		\caption{  Approximation of $\alpha_{\epsilon}(P(\cdot ; \nu))$ with 
				${\mathcal R}_{\epsilon}(P(\cdot ; \nu))$	for the damping
				optimization problem in Example \ref{ex:damp1}. For the plot
				on the left, $\epsilon = 0.1$. }
		\label{fig:error}
\end{figure}
}
\end{example}

\section{Fixed-point iterations for the pseudospectral abscissa 
				of a nonlinear eigenvalue problem}\label{sec:fp_mat_val}
The first-order approximation formula in (\ref{eq:main_char_psa_nle}) in Theorem \ref{thm:fo_estimate} 
for $\alpha_{\epsilon}(T)$ gives a good estimate if $\epsilon$ is small. For larger
values of $\epsilon$, this estimate is crude. Here, we describe two fixed-point iterations
for an accurate estimation of $\alpha_{\epsilon}(T)$ even when $\epsilon$ is large
based on the perturbation theory presented in the previous section. The idea
behind the fixed-iterations makes use of also the following backward error result, whose proof
is given in Appendix \ref{sec:proof_bw_nle_error}. Similar backward errors are considered
in the literature extensively; see, e.g., \cite[Section 4.2]{TisM01}, \cite[Section 2.2]{TisH01},
\cite[Theorem 1]{MicGWN06}.
\begin{theorem}\label{thm:berror_NEP}
Let $z \in {\mathbb C}$. We have
\vskip 1ex
\begin{equation*}
	\min \bigg\{ \epsilon \;\:  \bigg| \;\: \exists \Delta T \in {\mathcal P}_{\epsilon}	
				\;\, \mathrm{s.t.} \;\,	\mathrm{det}\big\{ (T + \Delta T)(z) \big\} = 0	\bigg\}	
		\;	=	\;
		\frac{\sigma_{\min}(T(z))}{\sqrt{ w_1^2 | t_1(z) |^2 + \dots + w_\kappa^2 | t_\kappa(z) |^2}}
				=:		\varphi(z)	
	\;	,
\end{equation*}
\vskip 1ex
\noindent
where ${\mathcal P}_{\epsilon}$ is the set defined in (\ref{eq:defn_NEP_pspec}).
Moreover, letting $u, v$ be consistent unit left, unit right singular vectors corresponding to 
$\sigma_{\min}(T(z))$, we have 
\vskip 1.5ex
\begin{tabular}{l}
		\hskip -4ex		\phantom{i}\textbf{(i)}
		$\mathrm{det}\left\{ (T + \underline{\Delta T})(z) \right\} = 0$, and	\\[.45em]
		\hskip -4ex		\textbf{(ii)}
		the inclusion $\underline{\Delta T} \in {\mathcal P}_{\varphi(z)}$
\end{tabular} 
\vskip 1.5ex
\noindent
for  $\underline{\Delta T}(\lambda) 
			= t_1(\lambda) w_1 \underline{\Delta T}_{\, 1} + \dots 
						+ t_\kappa(\lambda) w_\kappa \underline{\Delta T}_{\, \kappa}$
with
\begin{equation}\label{eq:defn_Tj}
	\underline{\Delta T}_{\, j}
		\;	:=	\;
	\frac{-\varphi(z) w_j \overline{t_j(z)} u v^\ast}{\sqrt{ w_1^2 | t_1(z) |^2 + \dots + w_\kappa^2 | t_\kappa(z) |^2}}	\:	,
	\quad	j = 1, \dots , \kappa.
\end{equation}
Indeed, $u^\ast \{ (T + \underline{\Delta T})(z) \} = 0$ and $\{ (T + \underline{\Delta T})(z) \} v = 0$, i.e.,
$u, v$ are left, right eigenvectors of $(T + \underline{\Delta T})(\lambda)$ corresponding to its eigenvalue $z$.
\end{theorem}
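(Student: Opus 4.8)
The plan is to recognize this as a structured-perturbation backward-error computation, essentially a $\kappa$-block generalization of the classical Malyshev/Tisseur result that the distance to singularity equals $\sigma_{\min}$. First I would fix $z \in {\mathbb C}$ and observe that, since $\det\{(T+\Delta T)(z)\}=0$ is equivalent to the existence of a unit vector $v$ with $(T+\Delta T)(z)v=0$, the admissible perturbations at the point $z$ act only through the single matrix $\Delta T(z) = \sum_{j=1}^\kappa t_j(z) w_j \Delta T_j$. So the proof splits into a \emph{lower bound} and a \emph{constructive upper bound}, and the key algebraic identity is that for $(\Delta T_1,\dots,\Delta T_\kappa)\in{\mathcal S}_\epsilon$ one has, exactly as in the chain of inequalities preceding Theorem~\ref{thm:max_LDelta},
\[
	\|\Delta T(z)\|_2
		\;\le\;
	\left\|[\,\Delta T_1\ \cdots\ \Delta T_\kappa\,]\right\|_2
	\sqrt{w_1^2|t_1(z)|^2+\dots+w_\kappa^2|t_\kappa(z)|^2}
		\;\le\;
	\epsilon\,\sqrt{w_1^2|t_1(z)|^2+\dots+w_\kappa^2|t_\kappa(z)|^2}.
\]

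For the lower bound on the minimal $\epsilon$, suppose $(T+\Delta T)(z)$ is singular for some $\Delta T\in{\mathcal P}_\epsilon$. Then $\sigma_{\min}(T(z)) \le \|\Delta T(z)\|_2$ by the standard fact that $\sigma_{\min}$ is the distance from $T(z)$ to the nearest singular matrix in the 2-norm; combining with the displayed inequality gives $\sigma_{\min}(T(z)) \le \epsilon\sqrt{\sum_j w_j^2|t_j(z)|^2}$, i.e.\ $\epsilon \ge \varphi(z)$. This shows $\varphi(z)$ is a valid lower bound for the minimum. For the reverse direction I would exhibit the explicit minimizer: take $u,v$ consistent unit left/right singular vectors for $\sigma_{\min}(T(z))$, so that $T(z)v = \sigma_{\min}(T(z))\,u$ and $u^\ast T(z) = \sigma_{\min}(T(z))\,v^\ast$, and define $\underline{\Delta T}_{\,j}$ as in (\ref{eq:defn_Tj}). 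A direct computation — the same normalization bookkeeping as around (\ref{eq:opt_perturbation}) — shows $\left\|[\,\underline{\Delta T}_{\,1}\ \cdots\ \underline{\Delta T}_{\,\kappa}\,]\right\|_2 = \varphi(z)$ (the stacked right factor is the unit vector $(w_j\overline{t_j(z)}/\sqrt{\sum w_j^2|t_j(z)|^2})_j$ tensored with $I$, scaled by $\varphi(z)$), hence $(\underline{\Delta T}_{\,1},\dots,\underline{\Delta T}_{\,\kappa})\in{\mathcal S}_{\varphi(z)}$, which is assertion (ii), and $\underline{\Delta T}(z) = -\varphi(z)\sqrt{\sum_j w_j^2|t_j(z)|^2}\,uv^\ast = -\sigma_{\min}(T(z))\,uv^\ast$, using the definition of $\varphi(z)$.

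It then remains to check assertion (i) together with the final eigenvector claim: from $(T+\underline{\Delta T})(z) = T(z) - \sigma_{\min}(T(z))\,uv^\ast$ we get $(T+\underline{\Delta T})(z)\,v = T(z)v - \sigma_{\min}(T(z))\,u = 0$ and $u^\ast(T+\underline{\Delta T})(z) = u^\ast T(z) - \sigma_{\min}(T(z))\,v^\ast = 0$, so $z$ is an eigenvalue of $(T+\underline{\Delta T})(\lambda)$ with right eigenvector $v$ and left eigenvector $u$, and in particular $\det\{(T+\underline{\Delta T})(z)\}=0$. Since $\underline{\Delta T}\in{\mathcal P}_{\varphi(z)}$ and achieves singularity at $z$, the minimum is $\le\varphi(z)$, matching the lower bound; this also shows the minimum is attained, so ``$\min$'' is justified. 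The only genuinely delicate point is the consistency/normalization of the singular vectors when $t_j(z)$ may be complex — one must be careful that the scalar $u^\ast\underline{\Delta T}(z)v$ and the outer-product structure line up so that $\underline{\Delta T}(z)$ is exactly $-\sigma_{\min}(T(z))uv^\ast$ rather than a unimodular multiple of it; apart from that, everything is routine linear algebra. (If the stated $\epsilon$-pseudospectrum uses a different norm on $\mathcal S_\epsilon$ the weights $w_j^2|t_j(z)|^2$ would be replaced by a dual-norm expression, but for the 2-norm used here the argument above is complete.)
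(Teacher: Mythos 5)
Your proposal is correct and follows essentially the same route as the paper's proof in Appendix~\ref{sec:proof_bw_nle_error}: the lower bound via the Eckart--Young--Mirsky distance to singularity combined with the Cauchy--Schwarz-type bound $\|\Delta T(z)\|_2 \le \|[\Delta T_1 \cdots \Delta T_\kappa]\|_2\sqrt{\sum_j w_j^2|t_j(z)|^2}$, and the matching upper bound via the explicit rank-one construction with $\underline{\Delta T}(z) = -\sigma_{\min}(T(z))\,uv^\ast$. The "delicate point" you flag about complex $t_j(z)$ is in fact harmless, since each term contributes $t_j(z)\overline{t_j(z)} = |t_j(z)|^2$, which is automatically real and positive.
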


\noindent
The quantity $\varphi(z)$ in Theorem \ref{thm:berror_NEP} is the backward error of $z \in {\mathbb C}$ to be an 
eigenvalue of $T(\lambda)$. In other words, it is the norm of the smallest perturbation of $T(\lambda)$ so 
that $z$ is an eigenvalue of the perturbed matrix-valued function. 
As an immediate corollary of Theorem \ref{thm:berror_NEP},
the $\epsilon$-pseudospectrum of $T$ can be characterized as follows.
\begin{theorem}\label{thm:pspec_char}
The following assertion holds:
\begin{equation}\label{eq:pspec_char}
	\Lambda_{\epsilon}(T)
		\;	=	\;
	\left\{ z \in {\mathbb C}	\;\;\;		\bigg|	\;\;\;
		\frac{\sigma_{\min}(T(z))}
			{\sqrt{ w_1^2 | t_1(z) |^2 + \dots + w_\kappa^2 | t_\kappa(z) |^2}}  \; \leq \; \epsilon	
		\right\}	\:	.
\end{equation}
\end{theorem}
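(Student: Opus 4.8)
The plan is to derive Theorem \ref{thm:pspec_char} directly from the backward error formula in Theorem \ref{thm:berror_NEP}, essentially as a tautological reformulation. First I would unwind the definition of $\Lambda_{\epsilon}(T)$ in (\ref{eq:defn_NEP_pspec}): a point $z \in {\mathbb C}$ lies in $\Lambda_{\epsilon}(T)$ if and only if there exists some $\Delta T \in {\mathcal P}_{\epsilon}$ such that $z \in \Lambda(T + \Delta T)$, i.e., $\mathrm{det}\{(T+\Delta T)(z)\} = 0$. This is precisely the condition that the minimum in Theorem \ref{thm:berror_NEP}, namely
\[
	\varphi(z) \; = \; \min\big\{ \epsilon \;\,|\;\, \exists \Delta T \in {\mathcal P}_\epsilon \;\text{ s.t. }\; \mathrm{det}\{(T+\Delta T)(z)\} = 0 \big\},
\]
is attained at some value not exceeding our prescribed $\epsilon$.

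The forward inclusion ($\subseteq$): suppose $z \in \Lambda_{\epsilon}(T)$, witnessed by $\Delta T \in {\mathcal P}_{\epsilon}$ with $\mathrm{det}\{(T+\Delta T)(z)\} = 0$. Then $\epsilon$ is a feasible value in the minimization defining $\varphi(z)$, so $\varphi(z) \leq \epsilon$, which is the right-hand side of (\ref{eq:pspec_char}). One small point to be careful about: the minimization in Theorem \ref{thm:berror_NEP} quantifies over $\Delta T \in {\mathcal P}_{\epsilon}$ with $\epsilon$ itself the variable being minimized; I would read this (consistently with the statement) as minimizing over all pairs $(\epsilon, \Delta T)$ with $\Delta T$ lying in the $\epsilon$-ball set ${\mathcal P}_\epsilon$, and the membership $\Delta T \in {\mathcal P}_\epsilon$ is monotone in $\epsilon$, so feasibility of a given $\Delta T$ at level $\|\cdot\| \le \epsilon$ makes $\epsilon$ admissible.

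The reverse inclusion ($\supseteq$): suppose $\varphi(z) \leq \epsilon$. By part (i) of Theorem \ref{thm:berror_NEP} the explicit perturbation $\underline{\Delta T}$ built from the singular vectors $u,v$ of $\sigma_{\min}(T(z))$ satisfies $\mathrm{det}\{(T+\underline{\Delta T})(z)\} = 0$, and by part (ii) we have $\underline{\Delta T} \in {\mathcal P}_{\varphi(z)} \subseteq {\mathcal P}_{\epsilon}$, the last inclusion again by monotonicity of ${\mathcal P}_{(\cdot)}$ in its subscript together with $\varphi(z) \le \epsilon$. Hence $z \in \Lambda(T + \underline{\Delta T}) \subseteq \Lambda_{\epsilon}(T)$. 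Combining the two inclusions gives (\ref{eq:pspec_char}).

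There is no real obstacle here — the content is entirely in Theorem \ref{thm:berror_NEP}, and the present statement is just the set-level repackaging. The only thing demanding any care is the bookkeeping around the subscript $\epsilon$: making explicit that ${\mathcal S}_{\epsilon'} \subseteq {\mathcal S}_{\epsilon}$ (and hence ${\mathcal P}_{\epsilon'} \subseteq {\mathcal P}_{\epsilon}$) whenever $\epsilon' \le \epsilon$, so that the attaining perturbation $\underline{\Delta T}$ at backward-error level $\varphi(z)$ is still a legitimate member of ${\mathcal P}_{\epsilon}$. With that observation in hand the proof is two or three lines in each direction.
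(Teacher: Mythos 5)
Your proof is correct and follows exactly the route the paper intends: the paper presents Theorem \ref{thm:pspec_char} as ``an immediate corollary'' of Theorem \ref{thm:berror_NEP} without writing out the details, and your two inclusions (feasibility of $\epsilon$ in the minimization for $\subseteq$, and the explicit minimal perturbation $\underline{\Delta T}\in{\mathcal P}_{\varphi(z)}\subseteq{\mathcal P}_\epsilon$ for $\supseteq$) are precisely the omitted argument. The monotonicity remark about ${\mathcal P}_{(\cdot)}$ is the right bookkeeping point to make explicit.
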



Now we describe a 
fixed-point iteration to estimate $\alpha_{\epsilon}(T)$. We start by perturbing $T$ in the direction
$\Delta T^{(0)}(\lambda) = t_1(\lambda) w_1 \Delta T^{(0)}_1 + 
			\dots + t_{\kappa}(\lambda) w_{\kappa} \Delta T^{(0)}_\kappa$
such that
\[
	\Delta T^{(0)}_{j}
			\;	:=	\;
	\frac{w_j \, \overline{t_j(\lambda_R)} y_0 x^\ast_0}
				{\sqrt{\sum_{\ell=1}^\kappa w_\ell^2 | t_\ell(\lambda_R) |^2}}	\;	,	
	\;\;\;	j	=	1,	\dots , \kappa \: ,
\]
where $\lambda_R$ is an eigenvalue of $T$, and $y_0, x_0$ are unit left, right eigenvectors of 
$T(\lambda)$ corresponding  to $\lambda_R$ normalized so that
\[
	\|x_0 \|_2 = \| y_0 \|_2 = 1
	\quad	\text{and}		\quad
	y^\ast_0 T'(\lambda_R) x_0 \text{ is real and negative}	. 
\]
This initial perturbation is similar to the one used in the fixed-point iteration in \cite{MicG12} for a slightly 
different definition of the $\epsilon$-pseudospectrum.
Letting $\Delta^{(0)} = (\Delta T^{(0)}_1 , \dots , \Delta T^{(0)}_\kappa)$, as asserted by Theorem \ref{thm:max_LDelta}, 
this choice is justified by the property
\[
		\Delta^{(0)}	\, \in \,	
			\argmax_{\Delta \in {\mathcal S}}	\:	\text{Re} \left\{ \mu'(0; \Delta, \lambda_R, T) \right\}
			\:	.
\]


Let $z_1$ be the rightmost eigenvalue of $(T + \epsilon \Delta T^{(0)})(\lambda)$. Clearly,
$z_1 \in \Lambda_{\epsilon}(T)$. In addition to $\epsilon \Delta T_0 \in {\mathcal P}_{\epsilon}$, 
there may be several perturbations $\Delta T \in {\mathcal P}_{\epsilon}$ such that $z_1$ 
is an eigenvalue of 
$(T + \Delta T)(\lambda)$. In particular, by Theorem \ref{thm:berror_NEP}, a minimal perturbation 
$\underline{\Delta T}^{(0)}(\lambda) = t_1(\lambda) w_1 \underline{\Delta T}^{(0)}_{\, 1} 
						+ \dots + t_{\kappa}(\lambda) w_{\kappa} \underline{\Delta T}^{(0)}_{\, \kappa}$ 
such that $z_1$ is an eigenvalue of $(T + \underline{\Delta T}^{(0)})(\lambda)$ is given by
the choice
\[
	\underline{\Delta T}^{(0)}_{\, j}
			\;	:=	\;
	\frac{-\varphi(z_1) w_j \overline{t_j(z_1)} u_1 v_1^\ast}
					{\sqrt{ \sum_{\ell = 1}^\kappa w_\ell^2 | t_\ell(z_1) |^2}}	
	\;	,	
	\;\;\;	j	=	1,	\dots , \kappa \:		,
\] 
where 
$\varphi(z_1) = \sigma_{\min}(T(z_1))/\sqrt{ \sum_{\ell = 1}^\kappa w_\ell^2 | t_\ell(z_1) |^2} \,$ and 
$u_1, v_1$ are consistent unit left, right singular vectors corresponding to 
$\sigma_{\min}(T(z_1))$. 
Recall also that, by Theorem \ref{thm:berror_NEP}, $u_1, v_1$ are left, right
eigenvectors of $(T + \underline{\Delta T}^{(0)})(\lambda)$ corresponding to its eigenvalue $z_1$. 
Consider instead the left, right eigenvectors $\widetilde{u}_1, \widetilde{v}_1$ of $(T + \underline{\Delta T}^{(0)})(\lambda)$ 
corresponding to the eigenvalue $z_1$ normalized such that
\begin{equation}\label{eq:nle_norm_cond}
	\| \widetilde{u}_1 \|_2 = \| \widetilde{v}_1 \|_2 = 1
	\quad\;\;	\text{and}		\quad\;\;
	\widetilde{u}_1^\ast \, \{ (T + \underline{\Delta T}^{(0)})'(z_1) \}  \widetilde{v}_1 
										\text{ is real and negative}	.
\end{equation}
Note that, to fulfill the normalization conditions in (\ref{eq:nle_norm_cond}), 
we can choose $\widetilde{u}_1 = -e^{{\rm i} \theta} u_1$, 
$\widetilde{v}_1 = v_1$, where $\theta$ is such that   
\begin{equation*}
\begin{split}
	\rho e^{{\rm i} \theta}
		\;	&	=	\;
		u_1^\ast \left\{ (T + \underline{\Delta T}^{(0)})'(z_1) \right\} v_1
			\;	=	\;
		u_1^\ast \left\{ T'(z_1) + (\underline{\Delta T}^{(0)})'(z_1) \right\} v_1		\\
		\;	&	=	\;
		u_1^\ast T'(z_1) v_1
				+
		\left\{ \frac{-\sigma_{\min}(T(z_1))}{\sum_{\ell=1}^\kappa w_\ell^2 | t_\ell(z_1) |^2} \right\}
		\sum_{\ell=1}^\kappa
			w_\ell^2 t_\ell'(z_1) \overline{t_\ell(z_1)}	\:	.
\end{split}
\end{equation*}
Now $\Delta^{(1)} = (\Delta T^{(1)}_1 , \dots , \Delta T^{(1)}_{\kappa})$ with
\[
	\Delta T^{(1)}_{j}
			\;	:=	\;
	\frac{w_j \, \overline{t_j(z_1)} \, \widetilde{u}_1 \widetilde{v}_1^\ast}
					{\sqrt{\sum_{\ell = 1}^\kappa w_\ell^2 | t_\ell(z_1) |^2}}	\;	,	
	\;\;\;	j	=	1,	\dots , \kappa \: 
\] 
corresponding to the perturbation 
$\Delta T^{(1)}(\lambda) = t_1(\lambda) w_1 \Delta T^{(1)}_1 + \dots + t_{\kappa}(\lambda) w_{\kappa} \Delta T^{(1)}_\kappa$
satisfies
\[
		\Delta^{(1)}	\, \in \,	
		\argmax_{\Delta \in {\mathcal S}}	\:	
				\text{Re} \left\{ \mu'(0; \Delta, z_1, T + \underline{\Delta T}^{(0)}) \right\}	\:	.
\]
We define $z_2$ as the rightmost eigenvalue of 
$(T + \epsilon \Delta T^{(1)})(\lambda)$, and
repeat the procedure. The resulting fixed-point iteration is outlined in 
Algorithm \ref{alg:fixed_point} below.

\begin{algorithm}
\begin{algorithmic}[1]
	\REQUIRE{A matrix-valued function $T$ as in (\ref{eq:mat_fun}), a real number $\epsilon > 0$,
				tolerance for termination $\mathsf{tol} > 0$.}
	\ENSURE{Estimates $f$ for $\alpha_{\epsilon}(T)$ and $z$
						for globally rightmost point in $\Lambda_{\epsilon}(T)$.}	
	\vskip 1.2ex
	\STATE{$z_0 \; \gets$ an eigenvalue of $T$.}\label{line_alg1:zm1}
			
	\vskip 1.2ex		
			
	\STATE{$x, y \; \gets$ unit right, left eigenvectors corr. to
				rightmost eigenvalue of $T$.}	\label{line_alg1:z0}
				
	\vskip 1.5ex
	
	\STATE{$y \; \gets \; - \{ (y^\ast T'(z_0) x) / | y^\ast T'(z_0) x| \} y $.}
	
	\vskip 1.7ex
	
	\STATE{$\Delta T^{(0)}_{j}
			\gets
			\left\{	 w_j \, \overline{t_j(z_0)} y x^\ast  \right\}	/
					\sqrt{\sum_{\ell=1}^\kappa w_\ell^2 | t_\ell(z_0) |^2 }  \,$ 
				for $j = 1, \dots , \kappa$.} 
				
	\vskip 1.3ex

\STATE{$\Delta T^{(0)}(\lambda) \; \gets \;  
			t_1(\lambda) w_1 \Delta T^{(0)}_1 + \dots + t_{\kappa}(\lambda) w_{\kappa} \Delta T^{(0)}_\kappa$.}
			\label{line_alg1:T0}
			
	\vskip 1.5ex
	
	\FOR{$k=1,2,\dots$}
	
	\vskip 1.2ex
	
	\STATE{$z_k \; \gets \;$ rightmost eigenvalue of $(T + \epsilon \Delta T^{(k-1)})(\lambda)$.}
	
	\vskip 1.2ex
	
	\STATE{\textbf{If} $| z_{k} - z_{k-1} | < \mathsf{tol} \;$ \textbf{return} 
						$z \gets z_k$, $f \gets \text{Re}(z_k)$.}\label{line:fp1_nle_ter}
						
	\vskip 1.2ex
	
	\STATE{$v_k, u_k \; \gets \;$ unit consistent right, left
							singular vectors corr. to $\sigma_{\min}(T(z_k))$.} 
							
	\vskip 1.9ex
	
	\STATE{$\delta_k \; \gets \; 	\left\{-\sigma_{\min}(T(z_k))
								/ \sum_{\ell=1}^\kappa w_\ell^2 | t_\ell(z_k) |^2 \right\}
		\sum_{j=1}^\kappa
			w_j^2 t_j'(z_k) \overline{t_j(z_k)}$}\label{line:fp1_nle_delta}				
							
	\vskip 1.8ex
	
	\STATE{$u_k \; \gets \; - \left\{ [u_k^\ast T'(z_k) v_k + \delta_k ]  
							/ | u_k^\ast T'(z_k) v_k + \delta_k | \right\} u_k $.}
	
	\vskip 1.3ex
	
	\STATE{$\Delta T^{(k)}_{j}
			\gets
			\left\{ w_j \, \overline{t_j(z_k)} u_k v_k^\ast \right\} /
				\sqrt{\sum_{\ell=1}^\kappa w_\ell^2 | t_\ell(z_k) |^2}  \,$ 
					for $j = 1, \dots , \kappa$.}  
	
	\vskip 1ex
	
	\STATE{$\Delta T^{(k)}(\lambda) \; \gets \;  
		t_1(\lambda) w_1 \Delta T^{(k)}_1 + \dots + t_{\kappa}(\lambda) w_{\kappa} \Delta T^{(k)}_\kappa$.}
	
	\vskip 1.5ex
	
	\ENDFOR
\end{algorithmic}
\caption{Fixed-point iteration for the pseudospectral abscissa of a matrix-valued function}
\label{alg:fixed_point}
\end{algorithm}

\subsection{Fixed-points of Algorithm \ref{alg:fixed_point}}\label{sec:fp_analysis}
We next analyze the fixed-points of Algorithm \ref{alg:fixed_point}.
To be precise, the fixed-point function associated with Algorithm \ref{alg:fixed_point}
is the continuous map $\zeta : {\mathcal D} \rightarrow {\mathcal D}$ such that $\widetilde{z} = \zeta(z)$
for any $z \in {\mathcal D}$ is defined as follows:
\begin{enumerate}
	\item Let $v, u$ be unit consistent right, left
							singular vectors corresponding to $\sigma_{\min}(T(z))$.
							
	\item Set $\widetilde{u} := - \left\{ [u^\ast T'(z) v + \delta ]  
							/ | u^\ast T'(z) v + \delta | \right\} u $, with
		\[
			\delta	:=  	-\frac{\sigma_{\min}(T(z))}{\sum_{\ell =1}^\kappa w_\ell^2 | t_\ell(z) |^2} 
				\sum_{\ell=1}^\kappa
						w_\ell^2 t_\ell'(z) \overline{t_\ell(z)}	.
		\]
	\item Set  
			$\Delta T(\lambda)  :=
				t_1(\lambda) w_1 \Delta T_1 + \dots + t_{\kappa}(\lambda) w_{\kappa} \Delta T_\kappa$,
		where
		\[
			\Delta T_{j}
				:=
			\frac{w_j \, \overline{t_j(z)} \widetilde{u} v^\ast} 
				{\sqrt{\sum_{\ell=1}^\kappa w_\ell^2 | t_\ell(z) |^2}}	 \: ,	\quad 
					j = 1, \dots , \kappa  \: .
		\]
	\item $\widetilde{z}$ is the rightmost eigenvalue of $(T + \epsilon \Delta T)(\lambda)$
	(if the rightmost eigenvalue is not unique, take one of the rightmost eigenvalues, e.g., the one with the
	largest imaginary part).
\end{enumerate}
The iterates of Algorithm \ref{alg:fixed_point} can be expressed as $z^{(k+1)} = \zeta(z^{(k)})$ for $k \geq 0$,
so, by the continuity of $\zeta$, the sequence $\{ z^{(k)} \}$ generated by Algorithm \ref{alg:fixed_point} 
without termination (i.e., without line \ref{line:fp1_nle_ter}) can only converge to the fixed-points of $\zeta$.


The subsequent arguments make use of the matrix-valued function
\begin{equation}\label{eq:defn_M}
	M(\lambda)
		\;	:=	\;
		\frac{T(\lambda)}{\sqrt{ w_1^2 | t_1(\lambda) |^2 + \dots + w_\kappa^2 | t_\kappa(\lambda) |^2}}	\:	,
\end{equation}
which is not analytic because of the term in the denominator.
However, the matrix-valued function ${\mathcal M}(s_1, s_2) := M(s_1 + {\rm i} s_2)$ over the domain 
$\{ (x, y) \; | \; x, y \in {\mathbb R}\, \text{ s.t. }\, x + {\rm i} y \in {\mathcal D} \} \subseteq {\mathbb R}^2$ is real-analytic, 
i.e., the real and imaginary parts of ${\mathcal M}(s_1, s_2)$
are real-analytic. We denote with ${\mathcal M}_{s_1}$, ${\mathcal M}_{s_2}$ the partial
derivatives of ${\mathcal M}$ with respect to $s_1$, $s_2$, respectively.
These (possibly complex) partial derivatives at a point $(x, y) \in {\mathbb R}^2$
such that $z = x + {\rm i} y \in {\mathcal D}$ are explicitly given by
 \small
\begin{equation}\label{eq:der_M}
\begin{split}
	&	
\hskip -.5ex
		{\mathcal M}_{s_1}(x,y)
			=
	\frac{1}{ \sqrt{ \sum_{\ell = 1}^\kappa w_\ell^2 | t_\ell (z) |^2}}
	\left\{  T'(z)  	-		
			\frac{ T(z) }{  \sum_{\ell = 1}^\kappa w_\ell^2 | t_\ell (z) |^2  }
			\sum_{\ell=1}^\kappa
						w_\ell^2 \text{Re}\left[ t'_\ell(z) \overline{t_\ell(z)} \right]		\right\} \: ,	\\[.4em]	
	&	
\hskip -.5ex
		{\mathcal M}_{s_2}(x,y)
			=
	\frac{1}{ \sqrt{\sum_{\ell = 1}^\kappa w_\ell^2 | t_\ell (z) |^2}}
	\left\{  {\mathrm i} T'(z)  	+		
			\frac{ T(z) }{  \sum_{\ell = 1}^\kappa w_\ell^2 | t_\ell (z) |^2  }
			\sum_{\ell=1}^\kappa
						w_\ell^2 \text{Im}\left[ t'_\ell(z) \overline{t_\ell(z)} \right]		\right\}	\:	.
\end{split}
\end{equation}
\normalsize
In our analysis, we make use of the map
\begin{equation}\label{eq:nond_opt_map}
	{\mathcal S} : {\mathcal D} \rightarrow {\mathbb C}	\,	,	\;\;\;
	{\mathcal S}(z)  \; = \;  \text{Re}\left\{ u^\ast {\mathcal M}_{s_1}(x,y) v \right\} 	\: -  \:	 
						{\rm i} \cdot \text{Re}\left\{ u^\ast {\mathcal M}_{s_2}(x,y) v \right\}	\:	,
\end{equation} 
where $x, y \in {\mathbb R}$ are such that $z = x + {\rm i} y \in {\mathcal D}$, and
$u$, $v$ denote a pair of consistent unit left, 
		unit right singular vectors of $T(z)$ corresponding to $\sigma_{\min}(T(z))$.
We define nondegenerate points of $\Lambda_{\epsilon}(T)$ 
in terms of ${\mathcal S}$ (and so in terms of ${\mathcal M}, M$) as follows.
\begin{definition}\label{defn:nondegenerate}
	A point $z \in \Lambda_{\epsilon}(T)$ is called nondegenerate if
	\begin{enumerate}
		\item the smallest singular value of $T(z)$ is simple, and
		\item ${\mathcal S}(z) \neq 0$.
	\end{enumerate}
\end{definition}

\noindent
Our main result in this subsection makes use of the following lemma regarding 
nondegenerate points on the boundary of $\Lambda_{\epsilon}(T)$.
A proof of the lemma is included in Appendix \ref{sec:opt_rightmost_pt}.
\begin{lemma}\label{Lem:char_rightp}
Let $z \in {\mathcal D}$ be a nondegenerate point. If the point $z$ is a locally rightmost 
point in $\Lambda_{\epsilon}(T)$, then ${\mathcal S}(z)$ is real and positive.
\end{lemma}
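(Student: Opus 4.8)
The plan is to characterize the locally rightmost point $z \in \Lambda_\epsilon(T)$ as a constrained maximizer of $\mathrm{Re}(\cdot)$ subject to remaining in $\Lambda_\epsilon(T)$, and then convert the resulting first-order optimality condition into the statement that $\mathcal{S}(z)$ is real and positive. By Theorem~\ref{thm:pspec_char}, a point $w = x + {\rm i}y$ lies in $\Lambda_\epsilon(T)$ exactly when $g(x,y) := \sigma_{\min}(M(x+{\rm i}y)) \leq \epsilon$, where $M$ is the normalized matrix-valued function in (\ref{eq:defn_M}); equivalently, writing $\mathcal{M}(s_1,s_2) = M(s_1+{\rm i}s_2)$, the constraint is $g(s_1,s_2) = \sigma_{\min}(\mathcal{M}(s_1,s_2)) \leq \epsilon$. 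Since $z$ is nondegenerate, $\sigma_{\min}(T(z))$ (hence $\sigma_{\min}(M(z))$) is simple, so in a neighborhood of $z$ the function $g$ is real-analytic, with gradient given by the standard singular-value perturbation formula: $\partial_{s_i} g\,(x,y) = \mathrm{Re}\{ u^\ast \mathcal{M}_{s_i}(x,y) v \}$ for $i = 1,2$, where $u,v$ are consistent unit left/right singular vectors of $T(z)$ for $\sigma_{\min}(T(z))$ — note these are also singular vectors of $M(z)$ since $M(z)$ is a positive scalar multiple of $T(z)$. Thus $\nabla g(x,y) = \big( \mathrm{Re}\{u^\ast \mathcal{M}_{s_1} v\},\ \mathrm{Re}\{u^\ast \mathcal{M}_{s_2} v\} \big)$, which is precisely $(\,\mathrm{Re}\,\mathcal{S}(z),\ -\mathrm{Im}\,\mathcal{S}(z)\,)$ by the definition (\ref{eq:nond_opt_map}) of $\mathcal{S}$.

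Next I would argue that at a locally rightmost point the constraint is active and the gradient points in the $+s_1$ direction. First, $g(x,y) = \epsilon$: if $g(x,y) < \epsilon$, then by continuity a whole neighborhood of $(x,y)$ lies in $\Lambda_\epsilon(T)$, so we could strictly increase $s_1$ and stay in $\Lambda_\epsilon(T)$, contradicting local rightmost-ness. Since $z$ is nondegenerate, $\mathcal{S}(z) \neq 0$, i.e. $\nabla g(x,y) \neq 0$, so near $z$ the set $\{g \leq \epsilon\}$ is a genuine smooth sublevel region with boundary a smooth curve through $(x,y)$. Now apply the Lagrange / KKT condition for maximizing $s_1$ over $\{g \leq \epsilon\}$: there is $\lambda \geq 0$ with $(1,0) = \lambda \nabla g(x,y)$. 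Since $\nabla g \neq 0$ we must have $\lambda > 0$, and then $\lambda\, \mathrm{Re}\{u^\ast \mathcal{M}_{s_1} v\} = 1 > 0$ forces $\mathrm{Re}\,\mathcal{S}(z) = \mathrm{Re}\{u^\ast \mathcal{M}_{s_1}v\} > 0$, while $\lambda\, \mathrm{Re}\{u^\ast \mathcal{M}_{s_2} v\} = 0$ forces $\mathrm{Im}\,\mathcal{S}(z) = -\mathrm{Re}\{u^\ast \mathcal{M}_{s_2}v\} = 0$. Hence $\mathcal{S}(z)$ is real and positive, as claimed. (Alternatively, without invoking KKT machinery: parametrize the smooth boundary curve $\{g = \epsilon\}$ near $(x,y)$; the tangent direction is orthogonal to $\nabla g$, and local maximality of $s_1$ along this curve forces the tangent to be vertical, i.e. $\nabla g$ horizontal with positive first component, after checking the second-order/sign direction using that the sublevel region lies on the side where $s_1$ decreases.)

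The main obstacle I expect is justifying the sign — that the gradient points toward $+s_1$ rather than $-s_1$ — rather than merely establishing that $\mathcal{S}(z)$ is real. A clean way to nail this down: suppose for contradiction $\mathrm{Re}\{u^\ast \mathcal{M}_{s_1}(x,y) v\} \leq 0$. If it is $< 0$, then moving in the direction of decreasing $g$ (namely $-\nabla g$, which has strictly positive first component) keeps us in $\Lambda_\epsilon(T)$ while strictly increasing $\mathrm{Re}$, contradicting local rightmost-ness; this also rules out $\mathcal{S}(z)$ being real and negative. If $\mathrm{Re}\{u^\ast \mathcal{M}_{s_1}(x,y)v\} = 0$, i.e. $\partial_{s_1} g (x,y)= 0$, then since $\nabla g \neq 0$ we have $\partial_{s_2} g(x,y) \neq 0$, so the implicit function theorem lets us solve $g = \epsilon$ for $s_2$ as a function of $s_1$ near $(x,y)$, and then for $s_1$ slightly larger than $x$ there is a point in $\Lambda_\epsilon(T)$ with larger real part — again a contradiction. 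A subtle point worth a remark is why the maximization can be restricted to the one smooth branch $\{\sigma_{\min}(\mathcal{M}) \leq \epsilon\}$ locally: this is exactly where simplicity of $\sigma_{\min}(T(z))$ is used, since it guarantees $\sigma_{\min}(\mathcal{M}(\cdot))$ is real-analytic (not merely continuous) near $z$, so the above perturbation formulas and implicit function theorem arguments are valid.
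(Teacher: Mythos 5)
Your proposal is correct and follows essentially the same route as the paper's proof: cast the locally rightmost point as a local maximizer of $s_1$ subject to $\sigma_{\min}({\mathcal M}(s_1,s_2))\leq\epsilon$, use simplicity to get real-analyticity and the singular-value gradient formula, use nondegeneracy for the constraint qualification, and read off positivity of $\mathrm{Re}\,{\mathcal S}(z)$ and vanishing of $\mathrm{Im}\,{\mathcal S}(z)$ from the KKT conditions. The only cosmetic difference is ordering: the paper first establishes $\sigma_{\min}(M(z))>0$ (needed for analyticity of the singular value) before invoking the derivative formula, whereas you assert analyticity first and obtain positivity later via activeness of the constraint; since the activeness argument uses only continuity, this is a harmless rearrangement.
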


\noindent
A nondegenerate point $z \in {\mathcal D}$ on the boundary of $\Lambda_{\epsilon}(T)$
such that ${\mathcal S}(z)$ is real and positive corresponds to a point
on the right boundary of $\Lambda_{\epsilon}(T)$ (i.e., in the sense that
$z + h$ does not belong to $\Lambda_{\epsilon}(T)$ for all positive $h \in {\mathbb R}$
sufficiently small) with a vertical tangent line. We give a special name to such points.
\begin{definition}\label{defn:rbvt_point} 
We call $z \in {\mathbb C}$ an rbvt (right boundary with a vertical tangent) point if 
\begin{enumerate}
	\item $z$ is nondegenerate,
	\item $z$ is on the boundary of $\Lambda_{\epsilon}(T)$, and
	\item ${\mathcal S}(z)$ is real and positive. 
\end{enumerate} 
\end{definition}

\noindent
Lemma \ref{Lem:char_rightp} shows that a nondegenerate locally rightmost
point in $\Lambda_{\epsilon}(T)$ is indeed an rbvt point. Conversely, an rbvt point is 
likely to be, but not necessarily, a locally rightmost point in $\Lambda_{\epsilon}(T)$.
In particular, if, at an rbvt point $z$, the second derivative of $\sigma_{\min}(M(\lambda))$
with respect to the imaginary part of $\lambda$ is negative, then every ball 
${\mathcal B}_r(z) = \{ \widetilde{z} \in {\mathbb C} \; | \; |\widetilde{z} - z | < r \}$ with 
positive radius $r$ contains a point inside $\Lambda_{\epsilon}(T)$ with real part
greater than $\text{Re}(z)$, so $z$ is not locally rightmost in $\Lambda_{\epsilon}(T)$.


We now present the main result that characterizes the fixed points of the
map $\zeta$ associated with Algorithm \ref{alg:fixed_point}. This result
implies that Algorithm \ref{alg:fixed_point}, if it converges to a nondegenerate point of $\Lambda_{\epsilon}(T)$,
the converged point must be an rbvt point, likely to be a locally rightmost point in $\Lambda_{\epsilon}(T)$.

\begin{theorem}\label{thm:fp_main_result}
Let $z \in \Lambda_{\epsilon}(T)$ be nondegenerate, and $\zeta$ be the fixed-point
map associated with Algorithm \ref{alg:fixed_point} defined as above.
\begin{enumerate}
	\item If $z$ is not an rbvt point,
	then $z$ is not a fixed-point of $\zeta$.
	\item If $z$ is the unique globally rightmost point in $\Lambda_{\epsilon}(T)$,
	then $z$ is a fixed-point of $\zeta$.
\end{enumerate}
\end{theorem}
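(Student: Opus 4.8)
The plan is to unpack the definition of the fixed-point map $\zeta$ and relate the behavior at $z$ to the quantity $\mathcal{S}(z)$, then invoke Lemma~\ref{Lem:char_rightp} for part 2 and a first-order perturbation argument for part 1. First I would set up notation: given $z \in \Lambda_{\epsilon}(T)$ nondegenerate, let $u,v$ be the consistent unit left, right singular vectors of $T(z)$ associated with $\sigma_{\min}(T(z))$, let $\delta$ be as in step 2 of the definition of $\zeta$, let $\widetilde{u}$ be the phase-rotated left vector, and let $\Delta T$ be the associated perturbation from step 3. The key observation to establish is that, by Theorem~\ref{thm:berror_NEP}, $z$ itself is an eigenvalue of $(T + \epsilon\Delta T)(\lambda)$ — indeed $\epsilon\Delta T$ has norm $\epsilon$ in the relevant sense and $z \in \Lambda_{\epsilon}(T)$ means $\varphi(z) \le \epsilon$, so $z$ lies in the pseudospectrum of this perturbed function; more precisely one checks $\{(T+\epsilon\Delta T)(z)\} v = 0$ directly from $T(z)v = \sigma_{\min}(T(z)) u$ (up to the scaling and phase in $\Delta T_j$). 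Thus $z$ is \emph{a} candidate for $\widetilde{z}$; it is \emph{the} image $\zeta(z)$ precisely when $z$ is the rightmost eigenvalue of $(T + \epsilon\Delta T)(\lambda)$.

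For part 2, suppose $z$ is the unique globally rightmost point in $\Lambda_{\epsilon}(T)$. Since $\Lambda(T + \epsilon\Delta T) \subseteq \Lambda_{\epsilon}(T)$ and $z \in \Lambda(T + \epsilon\Delta T)$ with $\mathrm{Re}(z) = \alpha_{\epsilon}(T) \ge \mathrm{Re}(w)$ for every $w \in \Lambda_{\epsilon}(T)$, in particular for every eigenvalue $w$ of $(T+\epsilon\Delta T)$, the eigenvalue $z$ is rightmost for $(T+\epsilon\Delta T)(\lambda)$; uniqueness of the globally rightmost point of $\Lambda_{\epsilon}(T)$ forces it to be the \emph{unique} rightmost eigenvalue, so the tie-breaking rule in step 4 is irrelevant and $\zeta(z) = z$. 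For part 1, I would argue the contrapositive: suppose $z$ is a fixed-point, i.e.\ $\zeta(z) = z$, so $z$ is the rightmost eigenvalue of $(T + \epsilon\Delta T)(\lambda)$ with $v$ a right and $\widetilde{u}$ a left eigenvector (the latter because $\widetilde{u}^\ast\{(T+\epsilon\Delta T)(z)\} = 0$, again from Theorem~\ref{thm:berror_NEP}, up to phase). Since $z$ is locally (indeed globally) rightmost in $\Lambda(T+\epsilon\Delta T)$, a standard first-order eigenvalue-perturbation argument in the direction of increasing real part — differentiating the eigenvalue of $(T+\epsilon\Delta T + s H)(\lambda)$ at $s=0$ for perturbations $H$ that push the eigenvalue rightward — shows the derivative of the real part must be nonpositive in every admissible direction, and combining this with the eigenvalue-derivative formula~(\ref{eq:der_mu}) and the explicit form of $\widetilde u$, $\Delta T$, forces the gradient condition that is exactly "$\mathcal{S}(z)$ is real and nonnegative"; nondegeneracy ($\mathcal{S}(z)\ne 0$) upgrades this to real and positive. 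Together with $z\in\partial\Lambda_\epsilon(T)$ (which holds since $\varphi(z)=\epsilon$ at a fixed point, as $z$ being rightmost in $\Lambda(T+\epsilon\Delta T)$ puts it on the boundary), this says $z$ is an rbvt point, contradicting the hypothesis of part 1.

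The main obstacle I anticipate is the first-order argument in part 1 that converts "$z$ is the rightmost eigenvalue of $(T+\epsilon\Delta T)$" into "$\mathcal{S}(z)$ is real and positive." This requires carefully tracking how the eigenvalue $z$ of the \emph{fixed} function $(T+\epsilon\Delta T)$ relates to the pseudospectral geometry encoded by $M(\lambda) = T(\lambda)/\sqrt{\sum w_\ell^2|t_\ell(\lambda)|^2}$ and its real-analytic avatar $\mathcal{M}(s_1,s_2)$: one must show that the phase normalization built into $\widetilde u$ (via $\delta$) is precisely what makes $u^\ast\mathcal{M}_{s_1}(x,y)v$ and $u^\ast\mathcal{M}_{s_2}(x,y)v$ match the derivative of the eigenvalue $z$ with respect to $s_1, s_2$ along the perturbation family, using the derivative formulas~(\ref{eq:der_M}). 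This is essentially a chain-rule bookkeeping exercise connecting $\sigma_{\min}$-derivatives, eigenvalue derivatives, and the quotient structure, and it likely reuses or parallels the computation in the proof of Lemma~\ref{Lem:char_rightp} in Appendix~\ref{sec:opt_rightmost_pt}. I would lean on that lemma as a black box wherever possible: if I can show directly that a fixed-point $z$ must be a locally rightmost point of $\Lambda_\epsilon(T)$ (or at least satisfy the stationarity that the lemma's proof extracts), then Lemma~\ref{Lem:char_rightp} delivers "$\mathcal{S}(z)$ real and positive" for free, and part 1 follows by contraposition without redoing the perturbation calculus.
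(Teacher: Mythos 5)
Your part 2 is essentially the paper's argument and is fine: Lemma \ref{Lem:char_rightp} gives ${\mathcal S}(z)$ real and positive, which fixes the phase so that $\widetilde{u}=-u$, the direct computation then shows $\{(T+\epsilon\Delta T)(z)\}v=0$ using $\sigma_{\min}(T(z))=\epsilon\sqrt{\sum_\ell w_\ell^2|t_\ell(z)|^2}$, and uniqueness of the globally rightmost point forces $z$ to be the unique rightmost eigenvalue of $(T+\epsilon\Delta T)(\lambda)$.

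Part 1, however, rests on a false ``key observation.'' It is \emph{not} true that $z$ is always an eigenvalue of $(T+\epsilon\Delta T)(\lambda)$ for a nondegenerate $z\in\Lambda_\epsilon(T)$. Writing $\widetilde{u}=-e^{{\rm i}\theta}u$, the computation you allude to gives
$\{(T+\epsilon\Delta T)(z)\}v=\bigl[\sigma_{\min}(T(z))-\epsilon e^{{\rm i}\theta}\sqrt{\sum_\ell w_\ell^2|t_\ell(z)|^2}\,\bigr]u$,
which vanishes only if $\theta=0$ \emph{and} $\varphi(z)=\epsilon$ --- i.e., only if $z$ is an rbvt point. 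The inequality $\varphi(z)\le\epsilon$ tells you that \emph{some} perturbation of norm at most $\epsilon$ makes $z$ an eigenvalue, not that this particular one does. In fact, the content of part 1 is exactly that for a non-rbvt point $(T+\epsilon\Delta T)(z)$ is \emph{invertible}: expanding an arbitrary kernel vector $\widehat v=cv+\sum_{j\ge2}c_jv_j$ in the right singular vectors of $T(z)$ kills the $c_j$ and leaves $c(\sigma_{\min}(T(z))-\epsilon e^{{\rm i}\theta}\sqrt{\cdots})=0$, so $c=0$ unless both the modulus and phase conditions hold. That single linear-algebra step is the whole proof of part 1 (the interior case being handled the same way with $\sigma_{\min}(T(z))=\rho\sqrt{\cdots}$, $\rho<\epsilon$). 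Your substitute route --- extracting a stationarity condition from ``$z$ is the rightmost eigenvalue of the fixed function $(T+\epsilon\Delta T)(\lambda)$'' by differentiating along perturbations $sH$ --- cannot work: being rightmost among the eigenvalues of one fixed matrix-valued function imposes no constraint on how the eigenvalue moves under further perturbation (there is no feasible set being maximized over), so no sign condition on ${\mathcal S}(z)$ follows. Likewise, your claim that a rightmost eigenvalue of $(T+\epsilon\Delta T)(\lambda)$ must lie on $\partial\Lambda_\epsilon(T)$ is unjustified as stated; the boundary condition $\varphi(z)=\epsilon$ has to come out of the kernel computation above, not out of ``rightmostness.''
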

\begin{proof}
Let us first prove 2. Since $z$ is a globally rightmost point and nondegenerate, we must have
${\mathcal S}(z)$ real and positive by Lemma \ref{Lem:char_rightp}. Now,
recalling the definition of ${\mathcal S}(z)$ in (\ref{eq:nond_opt_map}) with $u$, $v$
denoting a pair of consistent unit left, unit right singular vectors corresponding to $\sigma_{\min}(T(z))$, 
and observing $u^\ast T(z) v = \sigma_{\min}(T(z)) \| u \|_2^2 = \sigma_{\min}(T(z))$ is real,
it follows from the expressions in (\ref{eq:der_M}) for the partial derivatives of ${\mathcal M}$ that
\begin{equation*}
	{\mathcal S}(z)
		\;		=	\;
	\frac{1}{ \sqrt{\sum_{\ell = 1}^\kappa w_\ell^2 | t_\ell (z) |^2}}
	\left\{ u^\ast T'(z) v 	-		
			\frac{u^\ast T(z) v}{  \sum_{\ell = 1}^\kappa w_\ell^2 | t_\ell (z) |^2  }
			\sum_{\ell=1}^\kappa
						w_\ell^2 t_\ell'(z) \overline{t_\ell(z)}		\right\}	\:	.
\end{equation*}
As a result, since ${\mathcal S}(z)$ is real and positive,
\begin{equation}\label{eq:same_as_Mp}
	 u^\ast T'(z) v 	-		
			\frac{u^\ast T(z) v}{  \sum_{\ell = 1}^\kappa w_\ell^2 | t_\ell (z) |^2  }
			\sum_{\ell=1}^\kappa
						w_\ell^2 t_\ell'(z) \overline{t_\ell(z)}		
\end{equation}
is also real and positive, implying $\widetilde{u} = -u$  in the definition of $\zeta$. It follows that
\[
	\Delta T_{j}
				:=
			- \frac{w_j \, \overline{t_j(z)} u v^\ast} 
				{\sqrt{\sum_{\ell=1}^\kappa w_\ell^2 | t_\ell(z) |^2}}	 \: ,	\quad 
					j = 1, \dots , \kappa  \: .
\]
By definition, $\widetilde{z} = \zeta(z)$ is a rightmost eigenvalue of $(T + \epsilon \Delta T)(\lambda)$,
where $\Delta T(\lambda) = \sum_{\ell = 1}^\kappa w_\ell t_\ell(\lambda) \Delta T_\ell$. Additionally,
\begin{equation}\label{eq:fp1_der_int}	
\begin{split}
	\left\{ (T + \epsilon \Delta T)(z) \right\} v	\;	&	=	\;
				T(z) v	+	\epsilon \Delta T(z) v		\\[.4em]
				&	=	\;
				\sigma_{\min} (T(z)) u		+	\epsilon \sum_{j = 1}^\kappa  w_j t_j(z) \Delta T_j v	\\[.2em]
				&	=	\;
				\epsilon \left\{ \sqrt{\sum_{\ell=1}^\kappa w_\ell^2 | t_\ell(z) |^2} \right\} u
							-
		\epsilon \sum_{j=1}^\kappa \frac{w_j^2 | t_j(z) |^2}{\sqrt{\sum_{\ell=1}^\kappa w_\ell^2 | t_\ell(z) |^2}}u	\\[.6em]
				&	=	\;
				\epsilon \left\{ \sqrt{\sum_{\ell=1}^\kappa w_\ell^2 | t_\ell(z) |^2} \right\} u
							-
				\epsilon \left\{ \sqrt{\sum_{\ell=1}^\kappa w_\ell^2 | t_\ell(z) |^2} \right\} u		\;	=	\;	0	\:	,	
\end{split}
\end{equation}
where for the third equality, we note 
$\sigma_{\min} (T(z)) = \epsilon \bigg\{ \sqrt{\sum_{\ell=1}^\kappa w_\ell^2 | t_\ell(z) |^2} \bigg\}$,
as $z$ is the globally rightmost point in $\Lambda_{\epsilon}(T)$, using also the characterization
of $\Lambda_{\epsilon}(T)$ in (\ref{eq:pspec_char}).
This shows that $z$ is also an eigenvalue of $(T + \epsilon \Delta T)(\lambda)$. Indeed, the point $z$ must be
the unique rightmost eigenvalue of $(T + \epsilon \Delta T)(\lambda)$, as otherwise 
(i.e., if, in addition to $z$, the matrix-valued function
$(T + \epsilon \Delta T)(\lambda)$ has other rightmost eigenvalues) $z$ cannot be the
unique rightmost point in $\Lambda_{\epsilon}(T)$. Consequently, $\widetilde{z} = z$, proving that $z$
is a fixed-point of $\zeta$.

As for 1. let us first assume $z$ is on the boundary of $\Lambda_{\epsilon}(T)$. Since $z$ is not an
rbvt point, then 
the scalar ${\mathcal S}(z)$ is either not real, or real but negative. Following the steps in the previous paragraph,
${\mathcal S}(z)$ and the matrix in (\ref{eq:same_as_Mp}) have the same complex sign, say $e^{{\rm i} \theta}$
such that $\theta \neq 0$. Consequently, $\widetilde{u} = - e^{{\rm i} \theta} u$. Now let 
$\{v, v_2, \dots , v_n \}$ and $\{u, u_2, \dots , u_n \}$ be the orthonormal sets consisting of all right 
singular vectors and left singular vectors, respectively, of $T(z)$ satisfying $T(z) v_j = \sigma_j(z) u_j$
for $j = 2, \dots , n$, where $\sigma_j(z)$ is a singular value of $T(z)$ other than $\sigma_{\min}(T(z))$.
Take any vector $\widehat{v} \in {\mathbb C}^n$ and expand it as 
$\widehat{v} = c v + c_2 v_2 + \dots + c_n v_n$ for some scalars $c, c_2, \dots , c_n \in {\mathbb C}$.
Now suppose
\begin{equation}\label{eq:fp1_der_int2}
\begin{split}
	0	\;	&	=	\;
	\left\{ (T + \epsilon \Delta T)(z) \right\} \widehat{v}	\;		=	\;
				T(z) \widehat{v}	+	\epsilon \Delta T(z) \widehat{v}		\\[.2em]
				&	\quad\quad\quad	=	\;
		\epsilon \left\{ \sqrt{\sum_{\ell=1}^\kappa w_\ell^2 | t_\ell(z) |^2} \right\} c u + \sum_{j=2}^n c_j \sigma_j(z) u_j 
			-
		\epsilon \left\{ \sqrt{\sum_{\ell=1}^\kappa w_\ell^2 | t_\ell(z) |^2} \right\} c e^{{\rm i} \theta} u	\:	,
\end{split}
\end{equation}
where the steps when passing to the second line are similar to those in (\ref{eq:fp1_der_int}).
By the linear independence of $\{u, u_2, \dots , u_n \}$, we have $c_2 = \dots = c_n = 0$,
and $c(1 - e^{{\rm i} \theta}) = 0$, which implies $c = 0$ as $\theta \neq 0$. Consequently, $\widehat{v} = 0$.
This shows that $(T + \epsilon \Delta T)(z)$ is invertible, so
$z$ is not an eigenvalue of $(T + \epsilon \Delta T)(\lambda)$, whereas $\widetilde{z}$
is an eigenvalue of $(T + \epsilon \Delta T)(\lambda)$ by definition. Consequently, $\widetilde{z} \neq z$,
and $z$ is not a fixed-point of $\zeta$. 

Finally, if $z$ is in the interior of $\Lambda_{\epsilon}(T)$, then by the nondegeneracy of $z$, 
we have ${\mathcal S}(z) \neq 0$.
If $\sigma_{\min}(M(z)) = \epsilon$, then $z$ is a local maximizer of $\sigma_{\min}(M(z))$,
but this contradicts with ${\mathcal S}(z) \neq 0$ (i.e., the first-order necessary conditions
for $z$ to be a local maximizer of $\sigma_{\min}(M(z))$ imply ${\mathcal S}(z) = 0$). 
Thus, $\sigma_{\min}(M(z)) < \epsilon$, equivalently 
\[
	\sigma_{\min}(T(z)) = \rho \sqrt{\sum_{\ell=1}^\kappa w_\ell^2 | t_\ell(z) |^2}
	< \epsilon \sqrt{\sum_{\ell=1}^\kappa w_\ell^2 | t_\ell(z) |^2}.
\]
for some nonnegative real number $\rho < \epsilon$. Proceeding as in the previous paragraph, letting
$\{v, v_2, \dots , v_n \}$ and $\{u, u_2, \dots , u_n \}$ be the orthonormal sets consisting of all right 
singular vectors and left singular vectors, take any $\widehat{v} \in {\mathbb C}$, and
expand it as $\widehat{v} = c v + c_2 v_2 + \dots + c_n v_n$ for some scalars $c, c_2, \dots , c_n \in {\mathbb C}$.
Calculations analogous to (\ref{eq:fp1_der_int2}) yield
\begin{equation}\label{eq:fp1_der_int3}
\begin{split}
	0	\;	&	=	\;
	\left\{ (T + \epsilon \Delta T)(z) \right\} \widehat{v}	\;		=	\;
				T(z) \widehat{v}	+	\epsilon \Delta T(z) \widehat{v}		\\[.2em]
				&	\quad\quad\quad	=	\;
		\rho \left\{ \sqrt{\sum_{\ell=1}^\kappa w_\ell^2 | t_\ell(z) |^2} \right\} c u + \sum_{j=2}^n c_j \sigma_j(z) u_j 
			-
		\epsilon \left\{ \sqrt{\sum_{\ell=1}^\kappa w_\ell^2 | t_\ell(z) |^2} \right\} c e^{{\rm i} \theta} u	\:	,
\end{split}
\end{equation}
where now $\theta$ can be zero or nonzero. Linear independence of $\{u, u_2, \dots , u_n \}$ leads again to
$c_2 = \dots = c_n = 0$, as well as
and $c(\rho - \epsilon e^{{\rm i} \theta}) = 0$ implying $c = 0$ as $\rho < \epsilon$. It follows that
$\widehat{v} = 0$, and $(T + \epsilon \Delta T)(z)$ is invertible. Once again $z$ is not eigenvalue
and $\widetilde{z}$ is an eigenvalue of  $(T + \epsilon \Delta T)(\lambda)$, so $\widetilde{z} \neq z$.
\end{proof}

\subsection{An alternative fixed-point iteration}
Here we present an alternative fixed-point iteration to compute $\alpha_{\epsilon}(T)$
based on constant matrix perturbations of $T(\lambda)$. For a moment, let us assume
$T(\lambda)$ in (\ref{eq:mat_fun}) is such that $t_{\kappa}(\lambda) = 1$, and the weights
are such that $w_1  = \dots = w_{\kappa - 1} = 0$ and $w_\kappa = 1$. 
In this special case, the definition of the $\epsilon$-pseudospectrum given by (\ref{eq:defn_NEP_pspec})
and its characterization in (\ref{eq:pspec_char}) take the form
\begin{equation}\label{eq:psa_cons_per}
	\Lambda_{\epsilon}(T)
		\;	=	\;
	\bigcup_{\Delta \in {\mathbb C}^{n\times n}, \| \Delta \|_2 \leq \epsilon}
	\Lambda(T + \Delta)
		\;	=	\;
	\left\{
		z \in {\mathbb C}	\;	|	\;
			\sigma_{\min}(T(z)) \leq \epsilon
	\right\},
\end{equation}
where $\: (T + \Delta)(\lambda)	=	\left\{ \sum_{\ell = 1}^{\kappa - 1} t_{\ell}(\lambda) T_\ell \right\}
										+ \left\{ T_{\kappa} + \Delta \right\}$.
Hence,  in this case, $\Lambda_{\epsilon}(T)$ is defined in terms of only constant
perturbations of $T_{\kappa}$, whereas the perturbations of other coefficients
are not allowed. In this setting, Algorithm \ref{alg:fixed_point} takes
the special form in Algorithm \ref{alg:fixed_point1m}. We note that $\delta_k$ in 
line \ref{line:fp1_nle_delta} of Algorithm \ref{alg:fixed_point} becomes zero in 
the special setting, as $w_1 = \dots = w_{\kappa - 1} = 0$ and $t'_{\kappa}(\lambda) = 0$. 
The specialized algorithm is based on constant perturbations of $T_{\kappa}$;
in particular,
\begin{equation*}
	(T + \epsilon \Delta^{(k-1)})(\lambda) = 
				\left\{ \sum_{\ell = 1}^{\kappa - 1} t_{\ell}(\lambda) T_\ell \right\}
										+ \left\{ T_{\kappa} + \epsilon \Delta^{(k-1)} \right\}
\end{equation*}
in line \ref{line_alg2:cons_per} of Algorithm \ref{alg:fixed_point1m}.

\begin{algorithm}
\begin{algorithmic}[1]
	\REQUIRE{A matrix-valued function $T$ as in (\ref{eq:mat_fun}) with $t_{\kappa}(\lambda) = 1$
	and $w_1 = \dots = w_{\kappa - 1} = 0$, $w_{\kappa} = 1$, a real number $\epsilon > 0$,
				tolerance for termination $\mathsf{tol} > 0$.}
	\ENSURE{Estimates $f$ for $\alpha_{\epsilon}(T)$ and $z$
						for globally rightmost point in $\Lambda_{\epsilon}(T)$.}	
	\vskip 1.2ex
	\STATE{$z_0 \; \gets$ an eigenvalue of $T$.}\label{line_alg1:zm1}
			
	\vskip 1.2ex		
			
	\STATE{$x, y \; \gets$ unit right, left eigenvectors corr. to
				rightmost eigenvalue of $T$.}	\label{line_alg1:z0}
				
	\vskip 1.5ex
	
	\STATE{$y \; \gets \; - \{ (y^\ast T'(z_0) x) / | y^\ast T'(z_0) x| \} y $.}
	
	\vskip 1.7ex
	
	\STATE{$\Delta^{(0)}
			\gets
			 y x^\ast$.} 
			\label{line_alg2:Del0}
			
	\vskip 1.5ex
	
	\FOR{$k=1,2,\dots$}
	
	\vskip 1.2ex
	
	\STATE{$z_k \; \gets \;$ rightmost eigenvalue of $(T + \epsilon \Delta^{(k-1)})(\lambda)$.}\label{line_alg2:cons_per}
	
	\vskip 1.5ex
	
	\STATE{\textbf{If} $| z_{k} - z_{k-1} | < \mathsf{tol} \;$ \textbf{return} 
						$z \gets z_k$, $f \gets \text{Re}(z_k)$.}\label{line:fp2_nle_ter}
						
	\vskip 1.2ex
	
	\STATE{$v_k, u_k \; \gets \;$ unit consistent right, left
							singular vectors corr. to $\sigma_{\min}(T(z_k))$.}

	\vskip 1.8ex
	
	\STATE{$u_k \; \gets \; - \left\{ [u_k^\ast T'(z_k) v_k  ]  
							/ | u_k^\ast T'(z_k) v_k  | \right\} u_k $.}
	
	\vskip 2.3ex
	
	\STATE{$\Delta^{(k)}
			\gets
			u_k v_k^\ast$}

	\vskip 1.5ex
	
	\ENDFOR
\end{algorithmic}
\caption{Fixed-point iteration for the
pseudospectral abscissa of a matrix-valued function subject to constant perturbations}
\label{alg:fixed_point1m}
\end{algorithm}

Going back to our general setting in (\ref{eq:defn_NEP_pspec}), we can view $\Lambda_{\epsilon}(T)$ as the
$\epsilon$-pseudospectrum of
\begin{equation}\label{eq:defn_M2}
\begin{split}
	&
	M(\lambda)		=	\widetilde{t}_1(\lambda)	\widetilde{T}_1
						+	\dots		+	\widetilde{t}_\kappa(\lambda) \widetilde{T}_{\kappa}
						+	\widetilde{t}_{\kappa + 1}(\lambda) \widetilde{T}_{\kappa + 1}	
				\\[.7em]
	&
	\text{with}	\quad
	\widetilde{T}_j	\;	=	\;	T_j	\:	,	\quad
	\widetilde{t}_j(\lambda)	=
			\frac{t_j(\lambda)}
				{\sqrt{ w_1^2 | t_1(\lambda) |^2 + \dots + w_\kappa^2 | t_\kappa(\lambda) |^2}} \: ,
				\;\;\;\;		j = 1, \dots , \kappa	\:	, 	\\[.2em]
	&
	\phantom{\text{with}}	\quad
	\widetilde{T}_{\kappa + 1}	= 0	\:	,	\quad
	\widetilde{t}_{\kappa + 1}(\lambda) =1	\:	,		\\[.2em]
\end{split}
\end{equation}
and the weights $\widetilde{w}_{1} = \dots = \widetilde{w}_{\kappa} = 0$ 
and $\widetilde{w}_{\kappa+1} = 1$. Indeed, it follows from (\ref{eq:psa_cons_per}) that
\begin{equation*}
\begin{split}
	\Lambda_{\epsilon}(M)	
		\;	&	=	\;
	\left\{
		z \in {\mathbb C}	\;	|	\;
			\sigma_{\min}(M(z)) \leq \epsilon
	\right\}		\\[.3em]
		\;	&	=	\;
	\left\{ z \in {\mathbb C}	\;\;\;		\bigg|	\;\;\;
		\frac{\sigma_{\min}(T(z))}
			{\sqrt{ w_1^2 | t_1(z) |^2 + \dots + w_\kappa^2 | t_\kappa(z) |^2}}  \; \leq \; \epsilon	
		\right\}
			\;	=	\;
	\Lambda_{\epsilon}(T)	\:	,
\end{split}
\end{equation*}
where the second equality is due to
\[
	M(\lambda)
		\;	=	\;
	\frac{T(\lambda)}
				{\sqrt{ w_1^2 | t_1(\lambda) |^2 + \dots + w_\kappa^2 | t_\kappa(\lambda) |^2}}  \: ,
\]
as is apparent from (\ref{eq:defn_M2}). Hence, $\Lambda_{\epsilon}(T)$
is the same as the $\epsilon$-pseudospectrum of $M$ but when it is subject
to constant perturbations only. Now $\alpha_{\epsilon}(M) = \alpha_{\epsilon}(T)$,
and it seems we can apply Algorithm \ref{alg:fixed_point1m} to $M(z)$ to compute 
$\alpha_{\epsilon}(T)$. However, there is a technical difficulty, namely $M(\lambda)$ is not
analytic, and the ideas behind Algorithm \ref{alg:fixed_point1m} relies on the analyticity of $T(\lambda)$.
It is for instance not possible to replace $T(\lambda)$ in lines \ref{line_alg2m:normal1} 
and \ref{line_alg2m:normal2} of Algorithm \ref{alg:fixed_point1m} with $M(\lambda)$, as
$M(\lambda)$ is not differentiable (i.e., not holomorphic).


We instead proceed on the real-analytic counterpart ${\mathcal M}(s_1 , s_2) := M(s_1 + {\rm i} s_2)$,
when analyticity plays a role.
Let us in particular describe how we form $\Delta^{(k)}$ at iteration $k$
given the perturbation $\Delta^{(k-1)}$ from the previous iteration.
The point $z_k$ is now the rightmost eigenvalue of $(M + \epsilon \Delta^{(k-1)})(\lambda)$,
and $v_k$, $u_k$ are unit consistent right, left singular vectors corresponding to $\sigma_{\min}(M(z_k)) =: \gamma$. 
The point $z_k$ is also an eigenvalue of $(M + \gamma \underline{\Delta}^{(k-1)})(\lambda)$
for $\underline{\Delta}^{(k-1)} = u_k v_k^\ast$, and the vectors $v_k$, $u_k$ are corresponding right, left eigenvectors.
We assume $z_k$ is a simple eigenvalue of $(M + \gamma \underline{\Delta}^{(k-1)})(\lambda)$.
The perturbation $\Delta_k$ is the matrix $\Delta \in {\mathbb C}^{n\times n}$, $\| \Delta \|_2 \leq 1$ 
maximizing the rate of change in the real part of the eigenvalue $z_k$ of
$(M + \gamma \underline{\Delta}^{(k-1)} + \eta \Delta)(\lambda)$ at $\eta = 0$.

Hence, consider any 
$\Delta \in {\mathbb C}^{n\times n}$ such that $\| \Delta \|_2 \leq 1$. By the analytic implicit function
theorem, there exist an open interval $U$ containing 0 and unique real-analytic functions 
$\mu_1(\eta ; \Delta),  \mu_2(\eta ; \Delta)$ satisfying 
$\mu_1(0; \Delta) = \text{Re}(z_k), \; \mu_2(0 ; \Delta) = \text{Im}(z_k)$ and
\[
	\text{det}
	\{  ( {\mathcal M} + \gamma \underline{\Delta}^{(k-1)} + \eta \Delta)
		( \mu_1(\eta ; \Delta), \mu_2(\eta ; \Delta) ) \}	=	0	\quad	\forall \eta \in U.
\]
There is also an analytic matrix-valued function $v(\eta; \Delta)$
such that $v(0; \Delta) = v_k$ and
\[
	\left\{ \left( {\mathcal M} + \gamma \underline{\Delta}^{(k-1)} + \eta \Delta \right)
		\left( \mu_1(\eta ; \Delta), \mu_2(\eta ; \Delta) \right) \right\} \, v(\eta ; \Delta) 	\;	=	\;	0
		\:	
\]
for all $\eta \in U$ \cite[pages 32-33]{Rel69}. The equation above is analogous to (\ref{eq:eig_eqn}).
Differentiating the last equation at $\eta = 0$, then multiplying with $u_k^\ast$ from left, we obtain
\begin{equation}\label{eq:alg_der_nlp_inter}
	\mu_1'  \left\{ u^\ast_k {\mathcal M}_{s_1}(x_k, y_k) v_k \right\}		+	
			\mu_2'  \left\{ u^\ast_k {\mathcal M}_{s_2}(x_k , y_k) v_k \right\}
				+	u^\ast_k \Delta v_k 		\;	=	\;	0,
\end{equation}
where $x_k$, $y_k$ are real, imaginary parts of $z_k$, and
explicit expressions for ${\mathcal M}_{s_1}$, ${\mathcal M}_{s_2}$
are as in (\ref{eq:der_M}). Let
\begin{equation}\label{eq:defn_Md}
		M^{D}(\lambda)
			\;	:=	\;
		\frac{1}{ \sqrt{\sum_{\ell = 1}^\kappa w_\ell^2 | t_\ell (\lambda) |^2}}
		\left\{T'(\lambda)  	-		
			\frac{ T(\lambda) }{  \sum_{\ell = 1}^\kappa w_\ell^2 | t_\ell (\lambda) |^2  }
			\sum_{\ell=1}^\kappa
						w_\ell^2 t_\ell'(\lambda) \overline{t_\ell(\lambda)}		\right\}	\:	.
\end{equation}
It turns out that setting $\widetilde{u}_k = -u_k e^{{\rm i} \theta}$, $\widetilde{v}_k = v_k$
for $\theta$ such that $u_k^\ast M^{D}(z_k) v_k = \rho e^{{\rm i} \theta}$, we have
\[
	\text{Re} \{ \widetilde{u}^\ast_k {\mathcal M}_{s_1}(x_k, y_k) \widetilde{v}_k \}	
			=
		\widetilde{u}_k^\ast M^{D}(z_k) \widetilde{v}_k		\quad	\text{and}		\quad
	\text{Re} \{ \widetilde{u}^\ast_k {\mathcal M}_{s_2}(x_k, y_k) \widetilde{v}_k \}	
			=		0	\:	.
\]
Thus, multiplying (\ref{eq:alg_der_nlp_inter}) with $-e^{-{\rm i} \theta}$ and taking the real parts
yield
\begin{equation*}
\begin{split}
	0	\;	&	=	\;
		\mu_1' \text{Re} \left\{ \widetilde{u}^\ast_k {\mathcal M}_{s_1}(x_k, y_k) \widetilde{v}_k \right\}		+	
			\mu_2' \text{Re} \left\{ \widetilde{u}^\ast_k {\mathcal M}_{s_2}(x_k , y_k) \widetilde{v}_k \right\}
				+	\text{Re} \left\{ \widetilde{u}^\ast_k \Delta \widetilde{v}_k \right\}		\\[.5em]
		&	=	\;
				\mu'_1 \left\{ \widetilde{u}_k^\ast M^{D}(z_k) \widetilde{v}_k \right\}	+	
						\text{Re} \left\{ \widetilde{u}^\ast_k \Delta \widetilde{v}_k \right\}	\:	.	
\end{split}
\end{equation*}
Finally, as $\widetilde{u}_k^\ast M^{D}(z_k) \widetilde{v}_k = -\rho$ is real and negative,
we deduce
\[
	\mu'_1
		\;	=	\;
	- \frac{\text{Re} \left\{ \widetilde{u}^\ast_k \Delta \widetilde{v}_k \right\}}
				{\widetilde{u}_k^\ast M^{D}(z_k) \widetilde{v}_k}
		\;	=	\;
	\frac{\text{Re} \left\{ \widetilde{u}^\ast_k \Delta \widetilde{v}_k \right\}}
				{| \widetilde{u}_k^\ast M^{D}(z_k) \widetilde{v}_k |}	\:	.
\]
It follows that the matrix $\Delta$ with $\| \Delta \|_2 \leq 1$
maximizing $\mu'_1$ is given by $\Delta_k = \widetilde{u}_k \widetilde{v}_k^\ast$.
Initially, we start from an eigenvalue $z_0$ of $M(\lambda)$ (equivalently $T(\lambda)$), 
so $v_0 = x$, $u_0 =  y$ are right, left
eigenvectors of $M(\lambda)$ (equivalently $T(\lambda)$) corresponding to $z_0$.
The derivation above applies but with the simplification
$u_0^\ast M^D(z_0) v_0 = y^\ast T'(z_0) x$.
These arguments lead us to Algorithm \ref{alg:fixed_point2m} given below.

We emphasize that Algorithm \ref{alg:fixed_point2m} is inspired from the ideas behind
Algorithm \ref{alg:fixed_point1m} but designed for $M(z)$, and stated mostly in terms of $T(z)$
for which the $\epsilon$-pseudospectral abscissa is aimed at. To this end,
in line \ref{line_alg2m:cons_per} of Algorithm \ref{alg:fixed_point2m}, the point
$z_k$ is indeed a rightmost eigenvalue of $(M + \epsilon \Delta^{(k-1)})(\lambda)$,
which is equal to  
$\{ T(\lambda) / \sqrt{ \sum_{\ell = 1}^\kappa w_\ell^2 | t_\ell(\lambda) |^2} \}
			+ \epsilon \Delta^{(k-1)}$.
Additionally, in line \ref{line:fp2_nle_sval}, the vectors $v_k, u_k$ are consistent unit right,
unit left singular vectors of $M(z_k)$ corresponding to its smallest singular value,
as it turns out that the singular vectors of $T(z_k)$ and $M(z_k)$ are the same. 

\begin{algorithm}
\begin{algorithmic}[1]
	\REQUIRE{A matrix-valued function $T$ as in (\ref{eq:mat_fun}), a real number $\epsilon > 0$,
				tolerance for termination $\mathsf{tol} > 0$.}
	\ENSURE{Estimates $f$ for $\alpha_{\epsilon}(T)$ and $z$
						for globally rightmost point in $\Lambda_{\epsilon}(T)$.}	
	\vskip 1.2ex
	\STATE{$z_0 \; \gets$ an eigenvalue of $T$.}\label{line_alg2m:zm1}
			
	\vskip 1.2ex		
			
	\STATE{$x, y \; \gets$ unit right, left eigenvectors corr. to
				rightmost eigenvalue of $T$.}	\label{line_alg2m:z0}
				
	\vskip 1.5ex
	
	\STATE{$y \; \gets \; - \{ (y^\ast T'(z_0) x) / | y^\ast T'(z_0) x| \} y $.}\label{line_alg2m:normal1}
	
	\vskip 1.7ex
	
	\STATE{$\Delta^{(0)}
			\gets
			 y x^\ast$.} 
			\label{line_alg2m:Del0}
			
	\vskip 1.5ex
	
	\FOR{$k=1,2,\dots$}
	
	\vskip 1.2ex
	
	\STATE{$z_k \; \gets \;$ rightmost eigenvalue of 
	$\left\{ T(\lambda) / \sqrt{ \sum_{\ell = 1}^\kappa w_\ell^2 | t_\ell(\lambda) |^2} \right\}
		+  \epsilon \Delta^{(k-1)}$.}\label{line_alg2m:cons_per}
	
	\vskip 1.5ex
	
	\STATE{\textbf{If} $| z_{k} - z_{k-1} | < \mathsf{tol} \;$ \textbf{return} 
						$z \gets z_k$, $f \gets \text{Re}(z_k)$.}\label{line:fp2_nle_ter}
						
	\vskip 1.6ex
	
	\STATE{$v_k, u_k \; \gets \;$ unit consistent right, left
					singular vectors corr. to $\sigma_{\min}(T(z_k))$.}\label{line:fp2_nle_sval}

	\vskip 1.8ex
	
	\STATE{$u_k \; \gets \; - \left\{ [u_k^\ast M^D(z_k) v_k  ]  
							/ | u_k^\ast M^D(z_k) v_k  | \right\} u_k $.}\label{line_alg2m:normal2}
	
	\vskip 2.3ex
	
	\STATE{$\Delta^{(k)}
			\gets
			u_k v_k^\ast$}

	\vskip 1.5ex
	
	\ENDFOR
\end{algorithmic}
\caption{Fixed-point iteration for the
pseudospectral abscissa of a matrix-valued function based on constant perturbations}
\label{alg:fixed_point2m}
\end{algorithm} 	

As Algorithm \ref{alg:fixed_point1m} is a special case of Algorithm \ref{alg:fixed_point},
the analysis concerning the fixed-points of Algorithm \ref{alg:fixed_point} in Section
\ref{sec:fp_analysis} applies to Algorithm \ref{alg:fixed_point1m} as well. In particular
Theorem \ref{thm:fp_main_result} still holds, but now in the definition of a nondegenerate
point the  condition ${\mathcal S}(z) \neq 0$ (see Definition \ref{defn:nondegenerate}, part 2.)  
simplifies as $u^\ast T'(z) v \neq 0$, in the definition of an rbvt point the condition
${\mathcal S}(z)$ is real and positive (see Definition \ref{defn:rbvt_point}, part 3.)  
simplifies as $u^\ast T'(z) v$ is real and positive (as the denominator in (\ref{eq:defn_M})
is constant one in this special case), and the fixed-point function $\zeta$, letting
$\widetilde{z} = \zeta(z)$,  simplifies as follows:
\begin{enumerate}
	\item Let $v, u$ be unit consistent right, left
							singular vectors corresponding to $\sigma_{\min}(T(z))$.
							
	\item Set $\widetilde{u} := - \left\{ u^\ast T'(z) v / | u^\ast T'(z) v  | \right\} u $.
	\item Set  
			$\Delta   :=	\widetilde{u} v^\ast$.
	\item $\widetilde{z}$ is the rightmost eigenvalue of $T(\lambda) + \epsilon \Delta$.
\end{enumerate}
In item 4., if there are multiple rightmost eigenvalues, one of them can be taken
as explained before. 

As for Algorithm \ref{alg:fixed_point2m},
the associated fixed-point function that we now denote by $\zeta_3$ (to distinguish
the fixed-point functions associated with Algorithm \ref{alg:fixed_point},
Algorithm \ref{alg:fixed_point1m} explicitly), letting $\widetilde{z} = \zeta_3(z)$,
is defined as follows:
\begin{enumerate}
	\item Let $v, u$ be unit consistent right, left
							singular vectors corresponding to $\sigma_{\min}(T(z))$.
							
	\item Set $\widetilde{u} := - \left\{ u^\ast M^D(z) v / | u^\ast M^D(z) v  | \right\} u $.
	\item Set  
			$\Delta   :=	\widetilde{u} v^\ast$.
	\item $\widetilde{z}$ is the rightmost eigenvalue of 
	$\{ T(\lambda) / \sqrt{ \sum_{\ell = 1}^\kappa w_\ell^2 | t_\ell(\lambda) |^2} \} + \epsilon \Delta$.
\end{enumerate}
We state the fixed-point results for $\zeta_3$ formally below. Its proof is similar
to that for Theorem \ref{thm:fp_main_result} concerning the fixed-point function $\zeta$
associated with Algorithm \ref{alg:fixed_point}. The only difference in the proof is that
equations (\ref{eq:fp1_der_int}),
(\ref{eq:fp1_der_int2}), (\ref{eq:fp1_der_int3}) now involve $\{ (M + \epsilon \Delta)(z) \} v  = 0$
or $\{ (M + \epsilon \Delta)(z) \} \widehat{v}  = 0$
with $\Delta = \widetilde{u} v^\ast \:$ (rather than $\{ (T + \epsilon \Delta T)(z) \} v = 0$
or $\{ (T + \epsilon \Delta T)(z) \} \widehat{v} = 0$ with 
$\Delta T(\lambda) = 
			\sum_{j = 1}^\kappa w_j t_j(\lambda) \Delta T_j \,$,
	$\,\Delta T_j = \big\{ w_j \, \overline{t_j(z)} \widetilde{u} v^\ast \big/ 
						\sqrt{\sum_{\ell=1}^\kappa w_\ell^2 | t_\ell(z) |^2} \big\}$
						for $j = 1, \dots, \kappa$). 
\begin{theorem}\label{thm:fp_main_result2}
Let $z \in \Lambda_{\epsilon}(T)$ be nondegenerate (in the sense of 
Definition \ref{defn:nondegenerate}, with ${\mathcal S}(z)$ in part 2 as in (\ref{eq:nond_opt_map})), and
$\zeta_3$ be the fixed-point  map associated with Algorithm \ref{alg:fixed_point2m} defined as above.
\begin{enumerate}
	\item If $z$ is not an rbvt point (in the sense of Definition \ref{defn:rbvt_point}) 
	in $\Lambda_{\epsilon}(T)$, then $z$ is not a fixed-point of $\zeta_3$.
	\item If $z$ is the unique globally rightmost point in $\Lambda_{\epsilon}(T)$,
	then $z$ is a fixed-point of $\zeta_3$.
\end{enumerate}
\end{theorem}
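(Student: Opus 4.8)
The plan is to carry over the proof of Theorem \ref{thm:fp_main_result} almost verbatim, with the family $(T + \epsilon \Delta T)(\lambda)$ appearing there replaced by the constant-perturbation family $(M + \epsilon \Delta)(\lambda) = \{ T(\lambda)/\sqrt{\sum_{\ell = 1}^\kappa w_\ell^2 |t_\ell(\lambda)|^2} \} + \epsilon \Delta$, where $\Delta = \widetilde{u} v^\ast$ with $u, v$ a consistent pair of unit left, right singular vectors of $T(z)$ --- equivalently of $M(z)$, since $M(z)$ is a positive scalar multiple of $T(z)$ and hence has the same singular vectors. The single new ingredient that makes this transfer work is the identity ${\mathcal S}(z) = u^\ast M^D(z) v$ at such a consistent pair: since $u^\ast T(z) v = \sigma_{\min}(T(z))$ is real, comparing the formula (\ref{eq:defn_Md}) defining $M^D$ with the expression for ${\mathcal S}(z)$ obtained from (\ref{eq:der_M}) in the proof of Theorem \ref{thm:fp_main_result} shows the two coincide. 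Consequently ${\mathcal S}(z)$ is real and positive if and only if $u^\ast M^D(z) v$ is real and positive, and ${\mathcal S}(z) \neq 0$ if and only if $u^\ast M^D(z) v \neq 0$; these are precisely the quantities that control step~2 in the definition of $\zeta_3$.

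For part~2, suppose $z$ is the unique globally rightmost point of $\Lambda_{\epsilon}(T)$ and is nondegenerate. By Lemma \ref{Lem:char_rightp}, ${\mathcal S}(z)$ is real and positive, hence so is $u^\ast M^D(z) v$, hence $\widetilde{u} = -u$ and $\Delta = -u v^\ast$ in $\zeta_3$. Being globally rightmost, $z$ lies on the boundary of $\Lambda_{\epsilon}(T) = \Lambda_{\epsilon}(M)$, so by the constant-perturbation characterization (\ref{eq:psa_cons_per}) we have $\sigma_{\min}(M(z)) = \epsilon$, i.e., $M(z) v = \epsilon u$. A one-line computation mirroring (\ref{eq:fp1_der_int}) then gives $\{ (M + \epsilon \Delta)(z) \} v = \epsilon u - \epsilon u (v^\ast v) = 0$, so $z$ is an eigenvalue of $(M + \epsilon \Delta)(\lambda)$. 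Because $\| \Delta \|_2 = 1$, every eigenvalue of $(M + \epsilon \Delta)(\lambda)$ lies in $\Lambda_{\epsilon}(M) = \Lambda_{\epsilon}(T)$, and uniqueness of the globally rightmost point forces $z$ to be the unique rightmost eigenvalue of $(M + \epsilon \Delta)(\lambda)$; thus $\zeta_3(z) = z$.

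For part~1, suppose $z$ is nondegenerate but not an rbvt point. If $z$ is on the boundary of $\Lambda_{\epsilon}(T)$, then ${\mathcal S}(z)$, and hence $u^\ast M^D(z) v = \rho e^{{\rm i}\theta}$, is either nonreal or real and negative; nondegeneracy forces $\rho > 0$, so $\theta \neq 0$ and $\widetilde{u} = -e^{{\rm i}\theta} u$. Expanding an arbitrary $\widehat{v}$ in the right-singular-vector basis $\{ v, v_2, \dots, v_n \}$ of $T(z)$ and using $\sigma_{\min}(M(z)) = \epsilon$, one computes $\{ (M + \epsilon \Delta)(z) \} \widehat{v}$ exactly as in (\ref{eq:fp1_der_int2}); linear independence of $\{ u, u_2, \dots, u_n \}$ together with $\theta \neq 0$ forces $\widehat{v} = 0$, so $(M + \epsilon \Delta)(z)$ is invertible, $z$ is not an eigenvalue of $(M + \epsilon \Delta)(\lambda)$ while $\zeta_3(z)$ is, and $\zeta_3(z) \neq z$. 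If instead $z$ lies in the interior of $\Lambda_{\epsilon}(T)$, then ${\mathcal S}(z) \neq 0$ rules out $z$ being a local maximizer of $\sigma_{\min}(M(\cdot))$ (the first-order condition would give ${\mathcal S}(z) = 0$), so $\gamma := \sigma_{\min}(M(z)) < \epsilon$; the analogue of (\ref{eq:fp1_der_int3}) now yields the scalar relation $c(\gamma - \epsilon e^{{\rm i}\theta}) = 0$, and $|\gamma| < \epsilon$ again forces $c = 0$ and $\widehat{v} = 0$, so $(M + \epsilon \Delta)(z)$ is invertible and $\zeta_3(z) \neq z$.

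The step I expect to require the most care is the preliminary identity ${\mathcal S}(z) = u^\ast M^D(z) v$ at a consistent singular-vector pair, since the notions of nondegenerate and rbvt points are phrased through ${\mathcal S}$ while Algorithm \ref{alg:fixed_point2m} and the map $\zeta_3$ are phrased through $M^D$; one must also verify the two auxiliary facts that $T(z)$ and $M(z)$ share singular vectors and that $\epsilon \Delta$ with $\| \Delta \|_2 = 1$ is an admissible constant perturbation in the description (\ref{eq:psa_cons_per}) of $\Lambda_{\epsilon}(M)$. Once these are in place, everything else is a direct transcription of the three displayed computations (\ref{eq:fp1_der_int})--(\ref{eq:fp1_der_int3}) in the proof of Theorem \ref{thm:fp_main_result}.
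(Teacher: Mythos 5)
Your proposal is correct and is exactly the argument the paper intends: the paper itself only remarks that the proof of Theorem \ref{thm:fp_main_result2} is that of Theorem \ref{thm:fp_main_result} with $\{(T+\epsilon\Delta T)(z)\}v$ replaced by $\{(M+\epsilon\Delta)(z)\}v$ in (\ref{eq:fp1_der_int})--(\ref{eq:fp1_der_int3}), which is precisely your transcription. Your explicit verification of the bridging identity ${\mathcal S}(z)=u^\ast M^D(z)v$ (left implicit in the paper) and of the shared singular vectors of $T(z)$ and $M(z)$ is accurate and closes the only gaps one would need to check.
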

\noindent
According to Theorem \ref{thm:fp_main_result2}, just like Algorithm \ref{alg:fixed_point},
assuming Algorithm \ref{alg:fixed_point2m} converges to a nondegenerate point,
this point must be an rbvt point.

\begin{example}\label{ex:damp1_2}
\rm{Let us again consider the damping problem in Example \ref{ex:damp1},
in particular the matrix polynomial $P(\lambda) = \lambda^2 M + \lambda C_{\mathrm{int}} + K$
in that example, i.e., without external damping, that is with the damping parameter $\nu = 0$.
We apply Algorithms \ref{alg:fixed_point} and \ref{alg:fixed_point2m}
to compute the rightmost point in $\Lambda_{\epsilon}(P)$ for $\epsilon = 0.1$ and $\epsilon = 0.2$
with the weights $w_1 = w_2 = w_3 = 1$ and the tolerance ${\sf tol} = 10^{-10}$.

Both Algorithm \ref{alg:fixed_point} and Algorithm \ref{alg:fixed_point2m} are initialized
with $z_0$ equal to the eigenvalue with the largest imaginary part.
The two algorithms as well as the criss-cross algorithm \cite[Section 2.1]{MehM24} return 
the same globally rightmost points up to prescribed tolerances, specifically 
$0.3049280 + 7.7520368{\rm i}$ and $0.6614719 + 7.8301883{\rm i}$ for 
$\epsilon = 0.1$ and $\epsilon = 0.2$, respectively. These computed globally
rightmost points together with the boundary of $\Lambda_{\epsilon}(P)$ for 
$\epsilon = 0.1, 0.2$ are illustrated in Figure \ref{fig:poly_psa}.
Additionally, Table \ref{tab:nle_iterates} list the number of iterations needed
by Algorithm \ref{alg:fixed_point} and Algorithm \ref{alg:fixed_point2m} to reach the 
prescribed tolerance ${\sf tol} = 10^{-10}$, as well as the first few iterates of the 
algorithms until the first five decimal digits of the iterates become correct. In these
two applications of the algorithms with $\epsilon = 0.1$, $\epsilon = 0.2$,
Algorithm \ref{alg:fixed_point} requires fewer iterations for the prescribed accuracy
and seems to be converging faster. This is a general pattern we observe for
decent values of $\epsilon$. However, for larger values of $\epsilon$ close to $\sigma_{\min}(M)$
(note that $\Lambda_{\epsilon}(P)$ is unbounded for $\epsilon > \sigma_{\min}(M)$),
it seems that Algorithm \ref{alg:fixed_point2m} converges more reliably;
we refer to Section \ref{sec:num_examp} for numerical examples with larger $\epsilon$ values
for which Algorithm \ref{alg:fixed_point2m} converges accurately, while Algorithm \ref{alg:fixed_point} 
does not converge.}
\end{example}


\begin{figure}
\begin{floatrow}
\hskip -.4ex
\ffigbox{%
	\includegraphics[width = .28\textwidth]{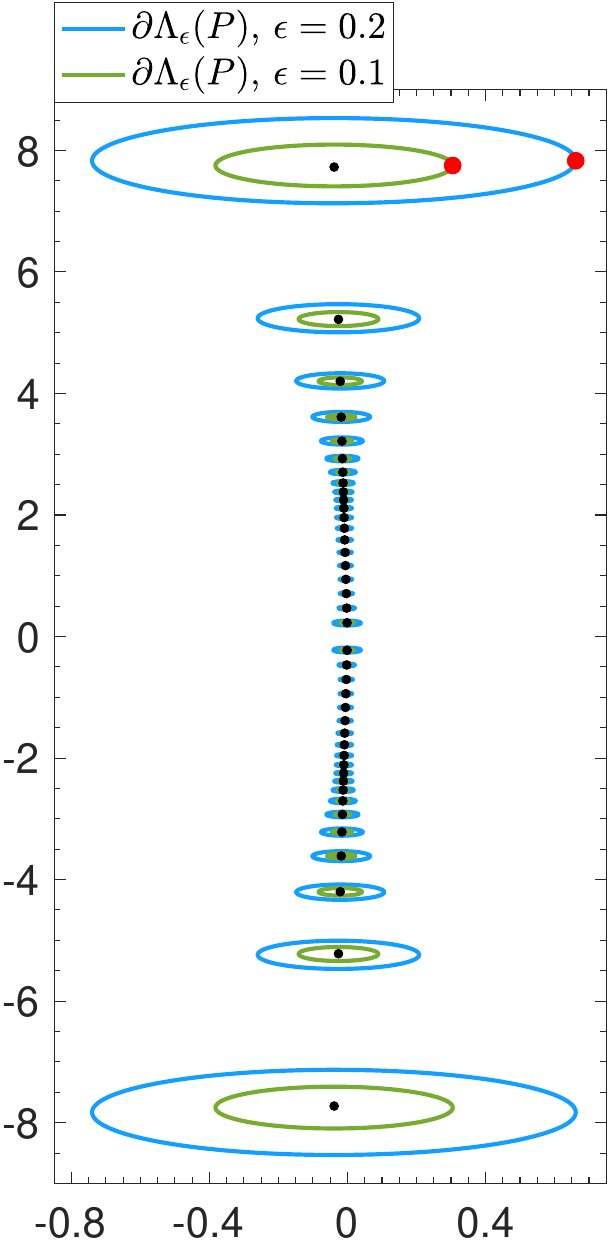} 
}{%
  \caption{  The boundary of $\Lambda_{\epsilon}(P)$ for the matrix polynomial
  $P$ in Example \ref{ex:damp1_2}. Black and red dots represent eigenvalues
  and computed rightmost points in $\Lambda_{\epsilon}(P)$. }
  \label{fig:poly_psa}
}
\hskip -.5ex
\capbtabbox{%
\small
\hskip -.7ex
\begin{tabular}{c}
    \begin{tabular}{|c||c|c|}
\hline
	$\epsilon$		&	Alg.~\ref{alg:fixed_point}	&	Alg.~\ref{alg:fixed_point2m}	\\
\hline
\hline
	0.1			&	7		&			10		\\
	0.2			&	9		&			13		\\
\hline
    \end{tabular}		\\[2.2em]
   \hskip -.7ex
    \begin{tabular}{|c||c|c|}
\hline
	$k$		&	Alg.~\ref{alg:fixed_point}		&	Alg.~\ref{alg:fixed_point2m}	\\
\hline
\hline
	0		& \hskip -.5ex	$-0.03863 + \underline{7.7}2651{\mathrm i}$	&	\hskip -.7ex $-0.03863 + \underline{7.7}2651{\mathrm i}$	\\
	1		& \hskip -.5ex	$\phantom{-}\underline{0.30}088 + \underline{7.7}0127{\mathrm i}$	&	\hskip -.7ex $\phantom{-}\underline{0.30}310 + \underline{7.7}3108{\mathrm i}$	\\
	2		& \hskip -.5ex	$\phantom{-}\underline{0.3049}2 + \underline{7.75204}{\mathrm i}$	&	\hskip -.7ex $\phantom{-}\underline{0.30}308 + \underline{7.75}194{\mathrm i}$	\\
	3		& \hskip -.5ex	$\phantom{-}\underline{0.30493} + \underline{7.75204}{\mathrm i}$	&	\hskip -.7ex $\phantom{-}\underline{0.30492} + \underline{7.75}193{\mathrm i}$	\\
	4		& \hskip -.5ex	$\phantom{-}\underline{0.30493} + \underline{7.75204}{\mathrm i}$	&	\hskip -.7ex $\phantom{-}\underline{0.30492} + \underline{7.75204}{\mathrm i}$	\\
\hline
    \end{tabular}		\\[4em]
    \hskip -.7ex
    \begin{tabular}{|c||c|c|}
\hline
	$k$		&	Alg.~\ref{alg:fixed_point}		&	Alg.~\ref{alg:fixed_point2m}	\\
\hline
\hline
	0		& \hskip -.5ex	$-0.03863 + \underline{7.7}2651{\mathrm i}$	&	\hskip -.7ex $-0.03863 + \underline{7.7}2651{\mathrm i}$	\\
	1		& \hskip -.5ex	$\phantom{-}\underline{0.6}2923  + \underline{7}.62747{\mathrm i}$	&	\hskip -.7ex $\phantom{-}\underline{0.6}4584 + \underline{7}.74547{\mathrm i}$	\\
	2		& \hskip -.5ex	$\phantom{-}\underline{0.661}28 + \underline{7.830}33{\mathrm i}$	&	\hskip -.7ex $\phantom{-}\underline{0.6}4621 + \underline{7.8}2829{\mathrm i}$	\\
	3		& \hskip -.5ex	$\phantom{-}\underline{0.66147} + \underline{7.8301}2{\mathrm i}$	&	\hskip -.7ex $\phantom{-}\underline{0.661}21 + \underline{7.8}2844{\mathrm i}$	\\
	4		& \hskip -.5ex	$\phantom{-}\underline{0.66147} + \underline{7.83019}{\mathrm i}$	&	\hskip -.7ex $\phantom{-}\underline{0.661}15 + \underline{7.8301}6{\mathrm i}$	\\	
	5		& \hskip -.5ex	$\phantom{-}\underline{0.66147} + \underline{7.83019}{\mathrm i}$	&	\hskip -.7ex $\phantom{-}\underline{0.66147} + \underline{7.8301}5{\mathrm i}$	\\
	6		& \hskip -.5ex	$\phantom{-}\underline{0.66147} + \underline{7.83019}{\mathrm i}$	&	\hskip -.7ex $\phantom{-}\underline{0.66147} + \underline{7.83019}{\mathrm i}$	\\
\hline
   \end{tabular}		
\end{tabular} 
}{%
  \caption{ Concerns applications of Alg.~\ref{alg:fixed_point}, Alg.~\ref{alg:fixed_point2m}
  		 to the matrix polynomial $P$ in Example \ref{ex:damp1_2}.
  		(Top) Number of iterations until termination.
  		(Middle) Iterates $z_k$, $\epsilon = 0.1$. 
		 (Bottom) Iterates $z_k$, $\epsilon = 0.2$.}%
	\label{tab:nle_iterates}
}
\end{floatrow}
\end{figure}

\section{Approximation of the pseudospectral abscissa of a matrix}\label{sec:psaA}
The $\epsilon$-pseudospectrum $\Lambda_{\epsilon}(A)$
of a matrix $A \in {\mathbb C}^{n\times n}$ is defined as in (\ref{eq:defn_ps}),
and the $\epsilon$-pseudospectral abscissa $\alpha_{\epsilon}(A)$ in (\ref{eq:defn_pspa})
is the real part of the rightmost point in $\Lambda_{\epsilon}(A)$.


Recalling the definition of $\Lambda_{\epsilon}(T)$, i.e., (\ref{eq:defn_NEP_pspec})
for a matrix-valued function $T$ as in (\ref{eq:mat_fun}), 
this matrix setting is only a special case of our treatment in the previous 
two sections; in particular, $\Lambda_{\epsilon}(A) = \Lambda_{\epsilon}(T)$ 
and $\alpha_{\epsilon}(A) = \alpha_{\epsilon}(T)$ for $T(\lambda) = \lambda I - A \;$ 
(that is $\kappa = 2$ and $t_1(\lambda) = \lambda$, $t_2(\lambda) = -1$, $T_1 = I$, $\, T_2 = A$ in (\ref{eq:mat_fun}))
$\;$with the weights $w_1 = 0$, $w_2 = 1$.

\subsection{First-order approximation}\label{sec:psa_fo_approx}
In this special matrix setting, we assume that every eigenvalue of $A$ is simple.
Let $\mu_0$ be an eigenvalue of $A$, and $U$ be 
as in Theorem \ref{thm:eig_well_posed}, i.e., $U$ is an open interval
containing 0 such that the $A + \eta \Delta$ has a unique eigenvalue
in $B_r(\mu_0)$, that is simple, for every $\eta \in U$ 
every $\Delta \in {\mathbb C}^{n\times n}$ such that $\| \Delta \|_2 \leq 1$
for some real number $r > 0$. Moreover, we assume $\epsilon$ is sufficiently small,
in particular $[0, \epsilon] \in U$.

For a given $\Delta \in {\mathbb C}^{n\times n}$ with $\| \Delta \|_2 \leq 1$, 
the function $\mu(\eta; \Delta, \mu_0)$ is defined as in Section \ref{sec:der_mu}, i.e.,
$\mu(\eta; \Delta, \mu_0)$ for $\eta \in U$ is the unique eigenvalue of
$A + \eta \Delta$ in $B_r(\mu_0)$. The eigenvalue function $\mu(\eta; \Delta, \mu_0)$
is analytic on $(0,\epsilon)$, continuous on $[0,\epsilon]$, and satisfies
$\mu(0; \Delta, \mu_0) = \mu_0$. 
In some occasions in this section, we also use the notation $\mu(\eta; \Delta, \mu_0, A)$ 
to make the dependence of this
analytic eigenvalue function on the matrix $A$ explicit. In this special setting, equations
(\ref{eq:der_mu}) and (\ref{eq:derL}) in Section \ref{sec:der_mu} concerning the derivatives 
of the eigenvalue function $\mu$ simplify as
\[
	\mu'(0 ; \Delta, \mu_0)
		\;	=	\;
	\frac{y^\ast \Delta x}{y^\ast x}
	\quad\;\;\;		\text{and}		\quad\;\;\;
	{\mathcal L}'_\Delta(0 ; \mu_0)
		\;	=	\;
	\mathrm{Re} \left\{ \mu'(0 ; \Delta, \mu_0) \right\}
		\;	=	\;
	\mathrm{Re} \left\{ \frac{y^\ast \Delta x}{y^\ast x} \right\}		\:	
\]
for a pair of unit right and unit left eigenvectors $x$ and $y$ of $A$ corresponding to $\mu_0$.
Moreover,
\[
	\max_{\Delta \in {\mathbb C}^{n\times n} \text{ s.t. } \| \Delta \|_2 \leq 1} \: {\mathcal L}'_\Delta(0 ; \mu_0)
		\;\;	=	\;\;
	\frac{1}{| y^\ast x |}
		\;\;	=	\;\;
	{\mathcal L}'_{\Delta_\ast}(0 ; \mu_0)		\;\;	, 	\quad	\text{where}	\;\;
	\Delta_\ast
		\;	=	\;
	 y x^\ast	\:	
\]
and, without loss of generality, we assume unit left, right eigenvectors $y$, $x$ are such that $y^\ast x$ is real
and positive. As a result, 
\begin{equation}\label{eq:RA_approx}
\begin{split}
	{\mathcal R}(\epsilon ; \mu_0)
		\;	&	=	\;
\max_{\Delta \in {\mathbb C}^{n\times n} \text{ s.t. } \| \Delta \|_2 \leq 1} \: {\mathcal L}_{\Delta}(\epsilon ; \mu_0)	\\[.3em]
		\;	&	=	\;
	\mathrm{Re}(\mu_0)	\: + \:	\epsilon \left( \frac{1}{| y^\ast x|} \right) \: + \: {\mathcal O}(\epsilon^2)	\:	.
\end{split}
\end{equation}
The main estimation result, that is Theorem \ref{thm:fo_estimate},
specialize into the following form. Here we remark that Assumption \ref{ass:psa_nlevp} 
holds trivially in the matrix setting, i.e., $\Lambda_{\epsilon}(A)$ is bounded, and 
$\Lambda(A)$, that is the set of eigenvalues of $A$, is finite, so it can be dropped.
\begin{theorem}\label{thm:psaA_est}
Suppose that $\epsilon > 0$ is sufficiently small so that $[0,\epsilon] \subset U \subset {\mathbb R}$ 
for an open interval $U$ satisfying the following condition: for every $\mu_0 \in \Lambda(A)$
there is a real number $r(\mu_0) > 0$
such that $A + \eta \Delta$
has only one eigenvalue in $B_{r(\mu_0)}(\mu_0)$, which is simple, for every $\eta \in U$ 
every $\Delta \in {\mathbb C}^{n\times n}$ such that $\| \Delta \|_2 \leq 1$.
Then, we have
\begin{equation*}
\begin{split}
	\alpha_{\epsilon}(A)  \;   & = \; \max \{ {\mathcal R}(\epsilon ; \mu_0) \; | \; \mu_0 \in \Lambda(A)	\}	\\
		& = \;
		\max_{\mu_0 \in \Lambda(A)}
						\left\{
							\mathrm{Re}(\mu_0)		+
			\epsilon	\left(	  \frac{1}{| y_{\mu_0}^\ast x_{\mu_0} |} 	\right)		
						\right\}
							\:	+	\:
						{\mathcal O}(\epsilon^2)		\:	,	
\end{split}
\end{equation*}
where $x_{\mu_0}$, $y_{\mu_0}$ denote a unit right eigenvector, a unit left eigenvector, respectively, 
of $A$ corresponding to its eigenvalue $\mu_0$.
\end{theorem}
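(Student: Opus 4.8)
The plan is to derive Theorem~\ref{thm:psaA_est} as a direct specialization of Theorem~\ref{thm:fo_estimate} by placing the matrix problem in the nonlinear eigenvalue framework of Section~\ref{sec:fo_nlevp}. Concretely, I would take $T(\lambda) = \lambda I - A$, that is, $\kappa = 2$, $t_1(\lambda) = \lambda$, $t_2(\lambda) = -1$, $T_1 = I$, $T_2 = A$, together with the weights $w_1 = 0$ and $w_2 = 1$. This $T$ is regular, since $\det(T(\lambda)) = \det(\lambda I - A)$ is a nonzero polynomial, and, as recorded in the text, $\Lambda_{\epsilon}(T) = \Lambda_{\epsilon}(A)$, $\alpha_{\epsilon}(T) = \alpha_{\epsilon}(A)$, and $\Lambda(T) = \Lambda(A)$. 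The one piece of bookkeeping worth spelling out is that, because $w_1 = 0$, a pair $(\Delta T_1, \Delta T_2)$ with $\| [\, \Delta T_1 \ \Delta T_2 \,] \|_2 \le 1$ yields $\Delta T(\lambda) = t_1(\lambda) w_1 \Delta T_1 + t_2(\lambda) w_2 \Delta T_2 = -\Delta T_2$, so $(T + \eta \Delta T)(\lambda) = \lambda I - (A + \eta \Delta T_2)$; hence, after relabelling $\Delta := \Delta T_2$, the family $\{(T + \eta \Delta T)(\lambda) : \Delta \in \mathcal{S}\}$ is exactly the family of matrices $\{A + \eta \Delta : \|\Delta\|_2 \le 1\}$, and the eigenvalue functions $\mu(\eta;\Delta)$ of Section~\ref{sec:fo_nlevp} become the eigenvalue branches of $A + \eta \Delta$ emanating from $\mu_0$.

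Next I would check that the hypotheses of Theorem~\ref{thm:fo_estimate} are satisfied for this $T$. Assumption~\ref{ass:psa_nlevp} holds trivially in the matrix setting: $\Lambda_{\epsilon}(A)$ is closed, being the sublevel set $\{z : \sigma_{\min}(\lambda I - A) \le \epsilon\}$ of a continuous function, and bounded, since every $z \in \Lambda_{\epsilon}(A)$ is an eigenvalue of some $A + \Delta$ with $\|\Delta\|_2 \le \epsilon$ and hence $|z| \le \|A\|_2 + \epsilon$; being compact, it attains the supremum in part~(i), while $\Lambda(A)$ is finite so part~(ii) is attained as well. Every $\mu_0 \in \Lambda(A)$ is simple by hypothesis and isolated because $A$ has finitely many eigenvalues. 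Finally, under the identification of the previous paragraph, the condition imposed on the interval $U$ and the radii $r(\mu_0)$ in Theorem~\ref{thm:psaA_est} is precisely the statement that $[0,\epsilon] \subset U$ with $U = \bigcap_{\mu_0 \in \Lambda(A)} U_{\mu_0}$ (a finite intersection of open intervals containing $0$, hence again such an interval) and each $U_{\mu_0}$ coming from Theorem~\ref{thm:eig_well_posed} applied to $T(\lambda) = \lambda I - A$ at $\mu_0$; this is the remaining hypothesis of Theorem~\ref{thm:fo_estimate}.

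With the hypotheses in place, I would invoke Theorem~\ref{thm:fo_estimate}. Its first conclusion reads $\alpha_{\epsilon}(T) = \max\{ \mathcal{R}(\epsilon;\mu_0) : \mu_0 \in \Lambda(T) \}$, which, upon replacing $T$ by $A$, is the first asserted equality. For the second, the cleanest route is to substitute the already-established formula (\ref{eq:RA_approx}) for $\mathcal{R}(\epsilon;\mu_0)$ into this equality; alternatively, one simplifies the right-hand side of (\ref{eq:main_char_psa_nle}) directly, noting that $T'(\mu_0) = I$, so $|y_{\mu_0}^\ast T'(\mu_0) x_{\mu_0}| = |y_{\mu_0}^\ast x_{\mu_0}|$ (which is nonzero because $\mu_0$ is simple and is independent of the choice of unit eigenvectors), and $\sqrt{w_1^2 |t_1(\mu_0)|^2 + w_2^2 |t_2(\mu_0)|^2} = \sqrt{0 + 1} = 1$. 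Either way one obtains $\alpha_{\epsilon}(A) = \max_{\mu_0 \in \Lambda(A)}\{ \mathrm{Re}(\mu_0) + \epsilon / |y_{\mu_0}^\ast x_{\mu_0}| \} + \mathcal{O}(\epsilon^2)$, the single $\mathcal{O}(\epsilon^2)$ outside the maximum absorbing the per-eigenvalue remainders uniformly, which is legitimate because $\Lambda(A)$ is finite and $|\max_i a_i - \max_i b_i| \le \max_i |a_i - b_i|$ (and is in any case already performed inside the proof of Theorem~\ref{thm:fo_estimate}). I do not anticipate any real obstacle: the entire content is the translation dictionary matching $\lambda I - A$ to the general $T$ and verifying the hypotheses, which is routine; the only mildly delicate points are this translation and the uniformity of the remainder term, both addressed above.
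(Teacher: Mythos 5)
Your proposal is correct and follows exactly the route the paper takes: Theorem \ref{thm:psaA_est} is obtained by specializing Theorem \ref{thm:fo_estimate} to $T(\lambda)=\lambda I - A$ with $w_1=0$, $w_2=1$, noting that Assumption \ref{ass:psa_nlevp} holds trivially (finitely many eigenvalues, bounded pseudospectrum) and that the general formula collapses to $\mathrm{Re}(\mu_0)+\epsilon/|y_{\mu_0}^\ast x_{\mu_0}|$ since $T'(\mu_0)=I$ and the weight factor equals $1$, as in (\ref{eq:RA_approx}). Your additional remarks on the translation of the perturbation set and the uniformity of the remainder over the finite set $\Lambda(A)$ are accurate elaborations of points the paper leaves implicit.
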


\subsection{Second-order approximation}\label{sec:psa_so_approx}
We further derive a second-order approximation formula for $\alpha_{\epsilon}(A)$
in the matrix setting for a matrix $A \in {\mathbb C}^{n\times n}$
with an error ${\mathcal O}(\epsilon^3)$ below. The 
$\epsilon$-pseudospectrum of $A$ can alternatively be defined in terms of the
Frobenius norm as
\[
	\Lambda_{\epsilon}(A)	\;		:=	\;	
		\bigcup_{\Delta \in {\mathbb C}^{n\times n} \text{ s.t. } \| \Delta \|_F \leq \epsilon}	
		\:	\Lambda(A + \Delta) \: ,
\]
which turns out to be equivalent to the definition in terms of the 2-norm in (\ref{eq:defn_ps}). Here, we
employ the definition above in terms of the Frobenius norm, as it facilitates the derivation
due to the existence of the inner product $\langle F , G \rangle = \text{trace}(F^\ast G)$
for $F, G \in {\mathbb C}^{n\times n}$ such that $\| A \|_F = \sqrt{\langle A , A \rangle}$.


As in the previous subsection, let us consider the eigenvalue function
$\mu(\eta; \Delta) = \mu(0; \Delta, \mu_0)$ of 
$A + \eta \Delta$ for a prescribed  $\Delta \in {\mathbb C}^{n\times n}$
as a function of $\eta$. 
Now, we focus on $\Delta$ satisfying $\| \Delta \|_F \leq 1$, and assume that 
$\epsilon > 0$ is small enough so that the eigenvalue $\mu(\eta; \Delta)$ is analytic 
on an open interval $U$ such that $[0,\epsilon] \subset U$.
We would like to come up with a second-order approximation of the quantity
\begin{equation*}
	\widetilde{\mathcal R}(\epsilon; \mu_0)
			\;	:=	\;	
	\max_{\Delta \in \widetilde{\mathcal S}}	\:	\text{Re} \{ \mu(\epsilon; \Delta) \}
\]
with an approximation error ${\mathcal O}(\epsilon^3)$, where
$\widetilde{\mathcal S} := \{ \Delta \in {\mathbb C}^{n\times n} \; | \; \| \Delta \|_F \leq 1 \}	\:$.


We exploit the Taylor expansion
\begin{equation*}
\begin{split}
	\max_{\Delta \in \widetilde{\mathcal S}}	\:	\text{Re} \{ \mu(\epsilon; \Delta) \}
		\;	&	=	\;
	\max_{\Delta \in \widetilde{\mathcal S}}	\: \text{Re} \{ \mu_0 \}
				+
				\epsilon \text{Re} \{ \mu'(0; \Delta) \}
				+
				\frac{\epsilon^2}{2} \text{Re} \{ \mu''(0; \Delta) \}
				+	{\mathcal O}(\epsilon^3)		\\
		&	=	\;
	\text{Re} \{ \mu_0 \}	+
			\epsilon \left\{ \max_{\Delta \in \widetilde{\mathcal S}} \:  \text{Re} \{ \mu'(0; \Delta) \}
				+
				\frac{\epsilon}{2} \text{Re} \{ \mu''(0; \Delta) \}
				+	{\mathcal O}(\epsilon^2)	\right\}	\:	.
\end{split}
\end{equation*}
Hence, it suffices to approximate 
$\: \max_{\Delta \in \widetilde{\mathcal S}} \:  \text{Re} \{ \mu'(0; \Delta) \}
				+
				\frac{\epsilon}{2} \text{Re} \{ \mu''(0; \Delta) \} \:$
with an approximation error ${\mathcal O}(\epsilon^2)$.
There are real-analytic vector-valued functions $x(\eta; \Delta)$, $y(\eta; \Delta)$
satisfying $\| x(\eta; \Delta) \|_2 = \| y(\eta; \Delta) \|_2 = 1$ and
\[
	(A + \eta \Delta - \mu(\eta; \Delta)I) x(\eta; \Delta)	=	0	\;	,
	\quad
	y(\eta; \Delta)^\ast (A + \eta \Delta - \mu(\eta; \Delta)I)	=	0
\]
for all $\eta \in U$ \cite[pages 32-33]{Rel69}. We can assume, without loss of generality, that
$y(\eta; \Delta)^\ast x(\eta; \Delta)$ is real and positive for all $\eta \in U$; if not,
$y(\eta; \Delta)$ can be replaced by
\[
	\widetilde{y}(\eta; \Delta) 
	:= \big\{ y(\eta; \Delta)^\ast x(\eta; \Delta) / | y(\eta; \Delta)^\ast x(\eta; \Delta) | \big\} y(\eta; \Delta)
\]
-- an analytic function due to $y(\eta; \Delta)^\ast x(\eta; \Delta) \neq 0$ as $\mu(\eta; \Delta)$
is a simple eigenvalue -- $\;$so that $\widetilde{y}(\eta; \Delta)^\ast x(\eta; \Delta)$ is real and positive
for all $\eta \in U$. Set $x := x(0; \Delta)$, $y := y(0; \Delta)$, which are unit right, unit left
eigenvectors of $A$ corresponding to the eigenvalue $\mu_0$ such that $y^\ast x$ is real
and positive. Moreover, let us use the notations
 $x'_{\Delta} := x'(0; \Delta)$, $y'_{\Delta} := y'(0; \Delta)$. Observe
\begin{equation}\label{eq:intermed}
\begin{split}
	&
	\max_{\Delta \in \widetilde{\mathcal S}} \:  \text{Re} \{ \mu'(0; \Delta) \}
				\: + \:
				\frac{\epsilon}{2} \cdot \text{Re} \{ \mu''(0; \Delta) \}
				\;	=	\;		\\
	&
	\max_{\Delta \in \widetilde{\mathcal S}}	\:
	\text{Re}
	\left\{
		\frac{y^\ast \Delta x}{y^\ast x}
	\right\}
		\: +	\\
	&
	\phantom{aaaaaaaaaaa}
	\frac{\epsilon}{2}	\cdot
	\text{Re}
	\left\{
		\frac{(y'_\Delta)^\ast \Delta x}{y^\ast x}
				+
		\frac{y^\ast \Delta (x'_{\Delta})}{y^\ast x}
				-
		\left\{	(y'_\Delta)^\ast x + y^\ast (x'_\Delta \right)\} \frac{y^\ast \Delta x}{(y^\ast x)^2}
	\right\}	\;	=	\;	\\
	&
	\max_{\Delta \in \widetilde{\mathcal S}}	\:
	\frac{1}{y^\ast x}
	\text{Re}
	\left\{
	\left\langle
		y x^\ast
			+
		\frac{\epsilon}{2}
		\left\{
		(y'_\Delta) x^\ast	 + y (x'_\Delta)^\ast	+	(\beta_{\Delta})	
										y x^\ast \right\}	\:	,	\:	\Delta
	\right\rangle
	\right\}	\:	,
\end{split}
\end{equation}		
where
$
	\beta_{\Delta}	 :=	\frac{-1}{y^\ast x}\left\{	(y'_\Delta)^\ast x + y^\ast (x'_\Delta) \right\}
$,		
recalling also that the inner product $\langle \cdot , \cdot \rangle$ is defined by
$\langle F , G \rangle := \text{trace}(F^\ast G)$ for $F, G \in {\mathbb C}^{n\times n}$.
The matrix $\Delta \in \widetilde{\mathcal S}$ maximizing the expression in the last line above
is of unit Frobenius norm, and of the form
\[
	\Delta_\ast	\;	=	\;	
	\frac{yx^\ast + {\mathcal O}(\epsilon)\;\; }{\| yx^\ast + {\mathcal O}(\epsilon) \|_F}
		\;	=	\;
	yx^\ast	+	{\mathcal O}(\epsilon)	\:	.
\]	
For $h = O(\epsilon)$, we approximate the derivative of the right eigenvector in this direction by
\begin{equation*}
\begin{split}
	x'_{\Delta_\ast}	\;	=	\;	\frac{x(h; \Delta_\ast) - x}{h}	+	{\mathcal O}(h)
			\;	&	=	\;
	\frac{x(h; y x^\ast) + {\mathcal O}(h\epsilon) - x}{h}	+	{\mathcal O}(h)	\\
			&	=	\;
	\frac{x(h; y x^\ast) - x}{h}	+	{\mathcal O}(\epsilon)	\\
			&	=	\;
				\widetilde{x}_p	+	{\mathcal O}(\epsilon)	\:\;	,
				\quad	\text{with}	\;\;\;
			\widetilde{x}_p
				:=
			\frac{x(h; y x^\ast) - x}{h}	\:	.
\end{split}
\end{equation*}
Note that $x(h; \Delta_\ast) = x(h; yx^\ast) + {\mathcal O}(h\epsilon)$, since $x(h; \Delta_\ast)$
is the eigenvector of 
$A + h \Delta_\ast = A + h ( yx^\ast + {\mathcal O}(\epsilon)) = A + hyx^\ast + {\mathcal O}(h \epsilon)$,
whereas $x(h; yx^\ast)$ is the eigenvector of $A + h y x^\ast$.
Similarly,
\[
	y'_{\Delta_\ast}	\;	=	\;	\widetilde{y}_p	+	{\mathcal O}(\epsilon)	
			\:\;	,
				\quad	\text{with}	\;\;\;
			\widetilde{y}_p
				:=
			\frac{y(h; y x^\ast) - y}{h}	\:	.
\]


It follows from (\ref{eq:intermed}) that
\begin{equation*}
\begin{split}
	&
	\max_{\Delta \in \widetilde{\mathcal S}} \:  \text{Re} \{ \mu'(0; \Delta) \}
				+
				\frac{\epsilon}{2} \cdot \text{Re} \{ \mu''(0; \Delta) \}
				\;	=	\;		\\
	&
	\frac{1}{y^\ast x}
	\text{Re}
	\left\{
	\left\langle
		y x^\ast
			+
		\frac{\epsilon}{2}
		\left\{
			(\widetilde{y}_p) x^\ast	 + y (\widetilde{x}_p)^\ast	+	
								(\widetilde{\beta}_p)	y x^\ast \right\}	\:	,	\:	\Delta_\ast
	\right\rangle
	\right\}	\:	+	\:	{\mathcal O}(\epsilon^2)	\:	=	\;			\\
	&
	\max_{\Delta \in \widetilde{\mathcal S}}	\:
	\frac{1}{y^\ast x}
	\text{Re}
	\left\{
	\left\langle
		y x^\ast
			+
		\frac{\epsilon}{2}
		\left\{
			(\widetilde{y}_p) x^\ast	 + y (\widetilde{x}_p)^\ast	+	
								(\widetilde{\beta}_p)	y x^\ast \right\}	\:	,	\:	\Delta
	\right\rangle
	\right\}	\:	+	\:	{\mathcal O}(\epsilon^2)	\:	,
\end{split}
\end{equation*}
where 
$\widetilde{\beta}_p :=	\frac{-1}{y^\ast x}\left\{	(\widetilde{y}_p)^\ast x + y^\ast (\widetilde{x}_p) \right\}$.
Clearly, $\Delta$ of unit Frobenius norm maximizing the expression in the last line is
\begin{equation}\label{eq:near_opt_pert}
	\widetilde{\Delta}_\ast	
		\;	:=	\;
		\frac{
		y x^\ast
			+
		\displaystyle
		\frac{\epsilon}{2}
		\left\{
			(\widetilde{y}_p) x^\ast	 + y (\widetilde{x}_p)^\ast	+	
								\widetilde{\beta}_p	y x^\ast \right\}
			\:
		 \;\; }
		{\left\|
		y x^\ast
			+
		\displaystyle
		\frac{\epsilon}{2}
		\left\{
			(\widetilde{y}_p) x^\ast	 + y (\widetilde{x}_p)^\ast	+	
								\widetilde{\beta}_p	y x^\ast \right\}
			\:
			\right\|_F}	\:	.
\end{equation}


To summarize, we have
\begin{equation*}
\begin{split}
&	\quad\quad\quad\quad
	\max_{\Delta \in \widetilde{\mathcal S}}	\:	\text{Re} \{ \mu(\epsilon; \Delta) \}
		\;\;\;		=	\;	\\
&	\text{Re} \{ \mu_0 \}
				+
	\frac{\epsilon}{y^\ast x}
	\text{Re}
	\left\{
	\left\langle
		y x^\ast
			+
		\frac{\epsilon}{2}
		\left\{
			(\widetilde{y}_p) x^\ast	 + y (\widetilde{x}_p)^\ast	+	
				(\widetilde{\beta}_p)	y x^\ast \right\}	\:	,	\:	\widetilde{\Delta}_\ast
	\right\rangle
	\right\}		+		{\mathcal O}(\epsilon^3)	\:	=		\\[.2em]
&	\text{Re} \{ \mu_0 \}
				+
	\frac{\epsilon}{y^\ast x}
	\text{Re}
	\left\{
	\left\langle
		y x^\ast
			+
		\frac{\epsilon}{2}
		\left\{
			(y'_{\widetilde{\Delta}_\ast}) x^\ast	 + y (x'_{\widetilde{\Delta}_\ast})^\ast	+	
				( \beta_{\widetilde{\Delta}_\ast} )	y x^\ast  + {\mathcal O}(\epsilon) \right\}	\:	,	\:	\widetilde{\Delta}_\ast
	\right\rangle
	\right\}		+		{\mathcal O}(\epsilon^3)		\: 	=	\\[.4em]
&
	\text{Re} \{ \mu_0 \}
				+
	\frac{\epsilon}{y^\ast x}
	\text{Re}
	\left\{
	\left\langle
		y x^\ast
			+
		\frac{\epsilon}{2}
		\left\{
			(y'_{\widetilde{\Delta}_\ast}) x^\ast	 + y (x'_{\widetilde{\Delta}_\ast})^\ast	+	
				( \beta_{\widetilde{\Delta}_\ast} )	y x^\ast  \right\}	\:	,	\:	\widetilde{\Delta}_\ast
	\right\rangle
	\right\}		+		{\mathcal O}(\epsilon^3)	\:	=			\\[.1em]
&
	 \text{Re} \{ \mu_0 \}
				+
	\epsilon \text{Re} \{ \mu'(0; \widetilde{\Delta}_\ast) \}
				+
	\frac{\epsilon^2}{2} \text{Re} \{ \mu''(0; \widetilde{\Delta}_\ast) \}
				+	
			{\mathcal O}(\epsilon^3)		
					\: =	\:
	\text{Re} \{ \mu(\epsilon; \widetilde{\Delta}_\ast) \}	+	{\mathcal O}(\epsilon^3)	
\end{split}
\end{equation*}
for $\widetilde{\Delta}_\ast$ as in (\ref{eq:near_opt_pert}). These arguments lead to the
following second-order approximation result for $\alpha_{\epsilon}(A)$. Note that we use the
notation $\alpha(F)$ to denote the spectral abscissa of a square matrix $F$ in the result. 
\begin{theorem}\label{thm:mat_sec_ord_approx}
Let $U \subset {\mathbb R}$ be an open interval containing 0 as in
Theorem \ref{thm:psaA_est}  but for $\Delta \in {\mathbb C}^{n\times n}$ such that $\| \Delta \|_F \leq 1$.
Suppose also $\epsilon > 0$ is small enough so that $[0,\epsilon] \subset U$.
Then, we have
\begin{equation}\label{eq:sec_ord_thm1}
\begin{split}
	&  \alpha_{\epsilon}(A)  \;    = \; 
			\max \{ \widetilde{{\mathcal R}}(\epsilon ; \mu_0) \; | \; \mu_0 \in \Lambda(A)	\}	\\[.4em]
	&  \phantom{\alpha_{\epsilon}(A)}  \:    = 
			\max_{\mu_0 \in \Lambda(A)}
				\bigg\{
					\mathrm{Re} \{ \mu(\epsilon; \widetilde{\Delta}^{\mu_0}_{\ast} , \mu_0) \}
				\bigg\}  +	{\mathcal O}(\epsilon^3)	\\[.4em]
	&  \phantom{\alpha_{\epsilon}(A)}  \:    = 
		\max_{\mu_0 \in \Lambda(A)}
		\bigg\{
		\alpha\left(
			A + \epsilon  \widetilde{\Delta}^{\mu_0}_{\ast} \right)
		\bigg\}  +	{\mathcal O}(\epsilon^3)		\\[.7em]
	&	 \phantom{\alpha_{\epsilon}(A)}
	\:	 =	
	\max_{\mu_0 \in \Lambda(A)}
		\bigg\{
			\mathrm{Re} \{ \mu_0 \}
				\; + \;
				\frac{\epsilon}{y_{\mu_0}^\ast x_{\mu_0}}
				\mathrm{Re}
				\bigg[
					\bigg\langle
					y_{\mu_0} x_{\mu_0}^\ast
						\; +		\hskip 33ex		\\[.3em]
	&
		\hskip 18.8ex
		\frac{\epsilon}{2}
		\left\{
			(\widetilde{y}^{\, \mu_0}_{p}) x_{\mu_0}^\ast	 + y_{\mu_0} (\widetilde{x}^{\, \mu_0}_{p})^\ast	+	
				(\widetilde{\beta}^{\, \mu_0}_{p})	y_{\mu_0} x_{\mu_0}^\ast 
		\right\}		,
																\widetilde{\Delta}_{\mu_0 , \ast}
	\bigg\rangle
	\bigg]	
						\bigg\}		
								+	
						{\mathcal O}(\epsilon^3)			,
\end{split}
\end{equation}
where $x_{\mu_0}$, $y_{\mu_0}$ denote a unit right eigenvector, a unit left eigenvector, respectively, 
of $A$ corresponding to its eigenvalue $\mu_0$ normalized such that $y_{\mu_0}^\ast x_{\mu_0}$ is real and positive,
\begin{equation*}
\begin{split}
	&
	\widetilde{x}^{\, \mu_0}_{p}
				:=
			\frac{x(h; y_{\mu_0} x_{\mu_0}^\ast, \mu_0) - x_{\mu_0}}{h}	\:	,	\quad
	\widetilde{y}^{\, \mu_0}_{p}
				:=
			\frac{y(h; y_{\mu_0} x_{\mu_0}^\ast, \mu_0) - y_{\mu_0}}{h}	\:	,	\\[.3em]
	&
	\widetilde{\beta}^{\, \mu_0}_{p} :=	\frac{-1}{y_{\mu_0}^\ast x_{\mu_0}}
	\left\{	(\widetilde{y}^{\, \mu_0}_{p})^\ast x_{\mu_0} + y_{\mu_0}^\ast (\widetilde{x}^{\, \mu_0}_{p}) \right\}	\:	
\end{split}
\end{equation*}
for some positive $h = {\mathcal O}(\epsilon)$, while
$x(\eta; \Delta,\mu_0)$, $y(\eta; \Delta,\mu_0)$ denote analytic unit right, unit left 
eigenvectors of $A + \eta \Delta$
such that $y(\eta; \Delta,\mu_0)^\ast x(\eta; \Delta,\mu_0)$ is real and positive,
$x(0; \Delta,\mu_0) = x_{\mu_0}$, $y(0; \Delta,\mu_0) = y_{\mu_0}$, and
\begin{equation}\label{eq:so_perturbation}
	\widetilde{\Delta}^{\mu_0}_{\ast}	
		\;	:=	\;
		\frac{
		y_{\mu_0} x_{\mu_0}^\ast
			+
		\displaystyle
		\frac{\epsilon}{2}
		\left\{
			(\widetilde{y}^{\, \mu_0}_{p}) x_{\mu_0}^\ast	 + 
					y_{\mu_0} (\widetilde{x}^{\, \mu_0}_{p})^\ast	+	
								\widetilde{\beta}^{\, \mu_0}_{p}	y_{\mu_0} x_{\mu_0}^\ast \right\}
			\:
		 \;\; }
		{\left\|
		y_{\mu_0} x_{\mu_0}^\ast
			+
		\displaystyle
		\frac{\epsilon}{2}
		\left\{
			(\widetilde{y}^{\, \mu_0}_{p}) x^\ast	 + y (\widetilde{x}^{\, \mu_0}_{p})^\ast	+	
								\widetilde{\beta}^{\, \mu_0}_p	y_{\mu_0} x_{\mu_0}^\ast \right\}
			\:
			\right\|_F}	\:	.
\end{equation}
\end{theorem}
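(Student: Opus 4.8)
The statement repackages the derivation carried out in Section~\ref{sec:psa_so_approx}, so the plan is to verify the four equalities of (\ref{eq:sec_ord_thm1}) in the order written, using that derivation as a black box and treating the ``gluing over eigenvalues'' and ``spectral abscissa versus a single eigenvalue branch'' issues by hand. For the first equality, $\alpha_{\epsilon}(A)=\max\{\widetilde{\mathcal R}(\epsilon;\mu_0)\mid\mu_0\in\Lambda(A)\}$, I would run the proof of Theorem~\ref{thm:fo_estimate} essentially verbatim, with the $2$-norm unit ball replaced throughout by the Frobenius-norm unit ball $\widetilde{\mathcal S}$ (the two $\epsilon$-pseudospectra coincide). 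Concretely: for $\alpha_{\epsilon}(A)\le\max\{\cdots\}$, take $z_\ast$ with $\mathrm{Re}(z_\ast)=\alpha_{\epsilon}(A)$ lying in $\Lambda(A+\Delta)$ for some $\|\Delta\|_F\le\epsilon$, set $\widehat{\Delta}:=\Delta/\epsilon\in\widetilde{\mathcal S}$, and argue as in Theorem~\ref{thm:fo_estimate} (invoking the Frobenius-norm version of Theorem~\ref{thm:eig_well_posed} supplied by the hypothesis of Theorem~\ref{thm:psaA_est}) that $z_\ast=\mu(\epsilon;\widehat{\Delta},\mu_\ast)$ for some $\mu_\ast\in\Lambda(A)$, whence $\widetilde{\mathcal R}(\epsilon;\mu_\ast)\ge\mathrm{Re}(z_\ast)=\alpha_{\epsilon}(A)$. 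For the reverse bound, pick $\mu_\ast$ attaining the right-hand maximum; compactness of $\widetilde{\mathcal S}$ gives $\underline{\Delta}\in\widetilde{\mathcal S}$ with $\widetilde{\mathcal R}(\epsilon;\mu_\ast)=\mathrm{Re}\{\mu(\epsilon;\underline{\Delta},\mu_\ast)\}$, and $\|\epsilon\underline{\Delta}\|_F\le\epsilon$ forces $\mu(\epsilon;\underline{\Delta},\mu_\ast)\in\Lambda(A+\epsilon\underline{\Delta})\subseteq\Lambda_{\epsilon}(A)$, so $\widetilde{\mathcal R}(\epsilon;\mu_\ast)\le\alpha_{\epsilon}(A)$.

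For the second and fourth equalities I would read off directly from Section~\ref{sec:psa_so_approx}: the chain of displayed equalities following (\ref{eq:near_opt_pert}) establishes, for each simple $\mu_0\in\Lambda(A)$, both $\widetilde{\mathcal R}(\epsilon;\mu_0)=\mathrm{Re}\{\mu(\epsilon;\widetilde{\Delta}^{\mu_0}_{\ast},\mu_0)\}+{\mathcal O}(\epsilon^3)$ with $\widetilde{\Delta}^{\mu_0}_{\ast}$ as in (\ref{eq:so_perturbation}), and that $\widetilde{\mathcal R}(\epsilon;\mu_0)$ equals the closed-form expression in the last two lines of (\ref{eq:sec_ord_thm1}) up to ${\mathcal O}(\epsilon^3)$. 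Since $\Lambda(A)$ is finite, these ${\mathcal O}(\epsilon^3)$ remainders are uniform over $\mu_0$, so the maximum over $\mu_0$ passes through the remainder, and combining with the first equality yields the second and the fourth.

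For the third equality I would use a sandwich argument. For each $\mu_0$, the point $\mu(\epsilon;\widetilde{\Delta}^{\mu_0}_{\ast},\mu_0)$ is an eigenvalue of $A+\epsilon\widetilde{\Delta}^{\mu_0}_{\ast}$, hence $\mathrm{Re}\{\mu(\epsilon;\widetilde{\Delta}^{\mu_0}_{\ast},\mu_0)\}\le\alpha(A+\epsilon\widetilde{\Delta}^{\mu_0}_{\ast})$; and since $\|\epsilon\widetilde{\Delta}^{\mu_0}_{\ast}\|_F=\epsilon$, every eigenvalue of $A+\epsilon\widetilde{\Delta}^{\mu_0}_{\ast}$ lies in $\Lambda_{\epsilon}(A)$, so $\alpha(A+\epsilon\widetilde{\Delta}^{\mu_0}_{\ast})\le\alpha_{\epsilon}(A)$. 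Taking the maximum over $\mu_0\in\Lambda(A)$ and using the second equality squeezes $\max_{\mu_0}\alpha(A+\epsilon\widetilde{\Delta}^{\mu_0}_{\ast})$ between $\alpha_{\epsilon}(A)+{\mathcal O}(\epsilon^3)$ and $\alpha_{\epsilon}(A)$, which is precisely the third equality.

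I expect the main obstacle to lie not in this bookkeeping but in making rigorous the ${\mathcal O}$-calculus of Section~\ref{sec:psa_so_approx} that it invokes, which rests on three uniformity/stability claims. First, the Taylor remainders of $\eta\mapsto\mu(\eta;\Delta)$ and of the normalized eigenvectors $x(\eta;\Delta),y(\eta;\Delta)$ should be bounded uniformly in $\Delta\in\widetilde{\mathcal S}$; this follows from compactness of $\widetilde{\mathcal S}$ and the joint real-analyticity of $(\eta,\Delta)\mapsto(\mu,x,y)$ about the simple eigenvalue $\mu_0$. Second, the maximizer $\Delta_\ast$ of $\Delta\mapsto\mathrm{Re}\{\mu'(0;\Delta)\}+(\epsilon/2)\,\mathrm{Re}\{\mu''(0;\Delta)\}$ over $\widetilde{\mathcal S}$ should satisfy $\Delta_\ast=y_{\mu_0}x_{\mu_0}^\ast+{\mathcal O}(\epsilon)$; this is the genuinely delicate step, a perturbation-of-optimizer statement that relies on the leading term $\Delta\mapsto\mathrm{Re}\langle y_{\mu_0}x_{\mu_0}^\ast,\Delta\rangle/(y_{\mu_0}^\ast x_{\mu_0})$ having a unique maximizer $y_{\mu_0}x_{\mu_0}^\ast$ on the unit Frobenius sphere with negative-definite restricted Hessian there (a negative multiple of the identity). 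Third, replacing $x'_{\Delta_\ast},y'_{\Delta_\ast}$ first by $x'_{y_{\mu_0}x_{\mu_0}^\ast},y'_{y_{\mu_0}x_{\mu_0}^\ast}$ (using the second claim and the smoothness of the eigenvector in the matrix) and then by the finite differences $\widetilde{x}^{\,\mu_0}_{p},\widetilde{y}^{\,\mu_0}_{p}$ with step $h={\mathcal O}(\epsilon)$ (using the first claim) incurs only ${\mathcal O}(\epsilon)$ errors, which the prefactor $\epsilon/2$ turns into ${\mathcal O}(\epsilon^3)$. Granting these three points, as Section~\ref{sec:psa_so_approx} does, the four equalities follow as sketched above.
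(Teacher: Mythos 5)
Your proposal matches the paper's own proof: the first equality by rerunning Theorem \ref{thm:fo_estimate} with the Frobenius norm, the second and fourth by citing the derivation of Section \ref{sec:psa_so_approx}, and the third via exactly the sandwich $\alpha_{\epsilon}(A) \geq \max_{\mu_0}\alpha(A+\epsilon\widetilde{\Delta}^{\mu_0}_{\ast}) \geq \max_{\mu_0}\mathrm{Re}\{\mu(\epsilon;\widetilde{\Delta}^{\mu_0}_{\ast},\mu_0)\}$ combined with the second equality. Your closing remarks on the uniformity of the remainders and the perturbation-of-the-maximizer step are a fair accounting of what the paper leaves implicit, but they do not change the route.
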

\begin{proof}
The proof of the first equality in (\ref{eq:sec_ord_thm1}) is the same as the proof of the first equality in 
Theorem \ref{thm:fo_estimate} specialized for a matrix $A$ in place of the matrix-valued function $T$, 
and the 2-norm replaced by the Frobenius norm. The second and fourth equalities in (\ref{eq:sec_ord_thm1})
are immediate from the derivation in this subsection, while the third equality in (\ref{eq:sec_ord_thm1})
follows from the inequalities
\[
	\alpha_{\epsilon}(A)
		\;	\geq	\;
	\max_{\mu_0 \in \Lambda(A)}
		\bigg\{
		\alpha\left(
			A + \epsilon  \widetilde{\Delta}^{\mu_0}_{\ast} \right)
		\bigg\} 
			\;	\geq	\;
		\max_{\mu_0 \in \Lambda(A)}
		\bigg\{
		\mathrm{Re} \{ \mu(\epsilon; \widetilde{\Delta}^{\mu_0}_{\ast} , \mu_0) \}
		\bigg\}
\]
combined with the second equality in (\ref{eq:sec_ord_thm1}).
\end{proof}

\begin{remark}\label{rmk:so_approx_prac}
Arguably the most useful approximation formula from Theorem \ref{thm:mat_sec_ord_approx} in practice is
\begin{equation}\label{eq:so_approx_formula}
	\alpha_{\epsilon}(A)
		\;	\approx	\;
	\max_{\mu_0 \in \Lambda(A)}
		\bigg\{
		\alpha\left(
			A + \epsilon  \widetilde{\Delta}^{\mu_0}_{\ast} \right)
		\bigg\}  
\end{equation}
with an approximation error ${\mathcal O}(\epsilon^3)$.
This requires the computation of the rightmost eigenvalue of $A + \epsilon  \widetilde{\Delta}^{\mu_0}_\ast$
for every $\mu_0 \in \Lambda(A)$, which can be achieved by an iterative method such as a Krylov
subspace method, e.g., {\sf eigs} in MATLAB. 


From the arguments that give rise to Theorem \ref{thm:mat_sec_ord_approx}, 
assuming $A$ has simple eigenvalues and $\epsilon$ is small enough,
the eigenvalue $\mu_0 \in \Lambda(A)$ maximizing the right-hand side
of (\ref{eq:so_approx_formula}) is the eigenvalue $\mu_0 \in \Lambda(A)$ maximizing 
$\widetilde{{\mathcal R}}(\epsilon ; \mu_0)$, that is the eigenvalue leading to
the rightmost point in $\Lambda_{\epsilon}(A)$.
Moreover, for this choice of $\mu_0 \in \Lambda(A)$, the arguments before
Theorem \ref{thm:mat_sec_ord_approx} show that 
$\alpha_{\epsilon}(A) = \widetilde{{\mathcal R}}(\epsilon ; \mu_0) \approx 
		\mathrm{Re} \{ \mu(\epsilon; \widetilde{\Delta}^{\mu_0}_{\ast}, \mu_0)  \}$,
		again provided $\epsilon$ is small enough.
Also, $\mu(\epsilon; \widetilde{\Delta}^{\mu_0}_{\ast}, \mu_0)$ is an eigenvalue of 
$A + \epsilon  \widetilde{\Delta}^{\mu_0 }_{\ast}$, so is contained in the $\epsilon$-pseudospectrum,
and, as a result, is an estimate for the globally rightmost point in $\Lambda_{\epsilon}(A)$.
Hence, the rightmost eigenvalue of $A + \epsilon  \widetilde{\Delta}^{\mu_0}_{\ast}$
for $\mu_0$ maximizing the right-hand side
of (\ref{eq:so_approx_formula}) could possibly provide a good estimate for a
globally rightmost point in $\Lambda_{\epsilon}(A)$.


Note also that the computation of $\widetilde{\Delta}^{\mu_0}_{\ast}$ requires an additional
eigenvector computation for every $\mu_0 \in \Lambda(A)$, namely the right, left eigenvectors 
$x(h; y_{\mu_0} x_{\mu_0}^\ast, \mu_0)$, $y(h; y_{\mu_0} x_{\mu_0}^\ast, \mu_0)$
of $A + h y_{\mu_0} x_{\mu_0}^\ast$. Since $h$ can be chosen small (e.g., half of the double machine precision), 
this can be achieved by computing the eigenvalue of $A + h y_{\mu_0} x_{\mu_0}^\ast$ closest to $\mu_0$,
and corresponding eigenvectors. Recalling $\mu_0$ is an eigenvalue of $A$, and nearly 
an eigenvalue of $A + h y_{\mu_0} x_{\mu_0}^\ast$, this task is likely to be cheaper 
than computing a rightmost eigenvalue.
\end{remark}

\begin{example}\label{ex:psa_mat1}
\rm{To illustrate the accuracy of the first-order and second-order approximations for $\alpha_{\epsilon}(A)$
with respect to $\epsilon$, we experiment with two random matrices. These matrices are 
$100\times 100$, $200\times 200$, and generated by typing 
\begin{center}
\texttt{randn(100)+0.5*sqrt(-1)*randn(100)},
\texttt{0.5*randn(200)+2*sqrt(-1)*randn(200)}, 
\end{center}
respectively, in MATLAB.
Letting 
\[
	\mathcal{R}_{\epsilon}(A)	:=	
	\max_{\mu_0 \in \Lambda(A)}
						\left\{
			\mathrm{Re}(\mu_0)		+
			\epsilon	\left(	  \frac{1}{| y_{\mu_0}^\ast x_{\mu_0} |} 	\right)		
						\right\}
			\;\;\;	\text{and}		\;\;\;
	\mathcal{R}^{(2)}_{\epsilon}(A)	:=	
	\max_{\mu_0 \in \Lambda(A)}
	\bigg\{
		\alpha\left(
			A + \epsilon  \widetilde{\Delta}^{\mu_0}_{\ast} \right)
		\bigg\} 
\]
be first-order and second-order approximations, the errors $| \alpha_{\epsilon}(A) - \mathcal{R}_{\epsilon}(A) |$
and $| \alpha_{\epsilon}(A) - \mathcal{R}^{(2)}_{\epsilon}(A) |$ of these approximations for the two random
matrices are plotted in Figure \ref{fig:error_mat1} as a function of $\epsilon$. The plots are in logarithmic scale. 
The slopes of $| \alpha_{\epsilon}(A) - \mathcal{R}_{\epsilon}(A) |$ and 
$| \alpha_{\epsilon}(A) - \mathcal{R}^{(2)}_{\epsilon}(A) |$ in the plots appear to be two and three, respectively,
same as the slopes of $y = \epsilon^2$ and $y = \epsilon^3$ in the logarithmic scale (i.e., the slopes of
$\log \, y \: = \: 2 \log \, \epsilon$ and $\log \, y \: = \: 3 \log \, \epsilon$). The plots confirm that the approximation
errors of $\mathcal{R}_{\epsilon}(A)$ and $\mathcal{R}^{(2)}_{\epsilon}(A)$ for these two random matrices
are ${\mathcal O}(\epsilon^2)$ and ${\mathcal O}(\epsilon^3)$.}

\begin{figure}
	\begin{tabular}{cc}
		\hskip -3.5ex
			\subfigure[$100\times 100$ matrix example]{\includegraphics[width = .52\textwidth]{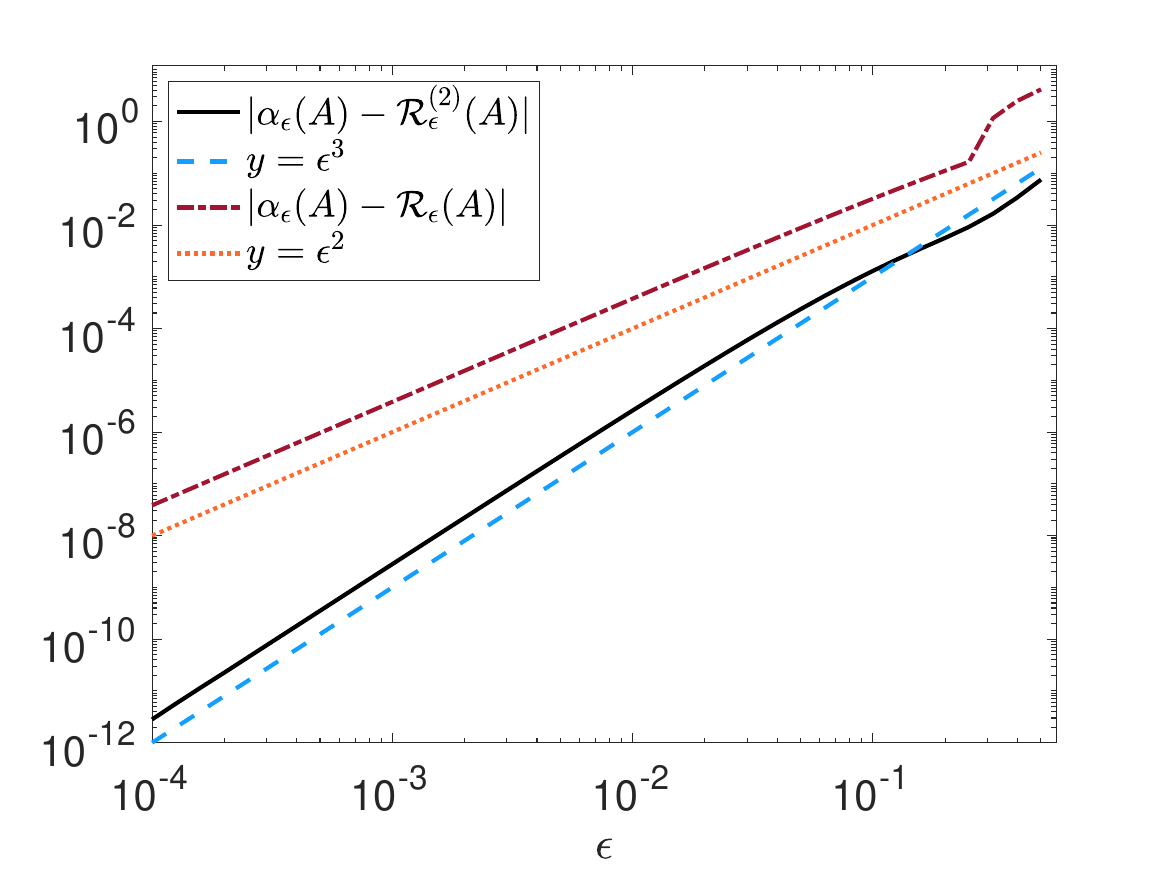}}   & 
			\hskip -1.5ex
			\subfigure[$200\times 200$ matrix example]{\includegraphics[width = .52\textwidth]{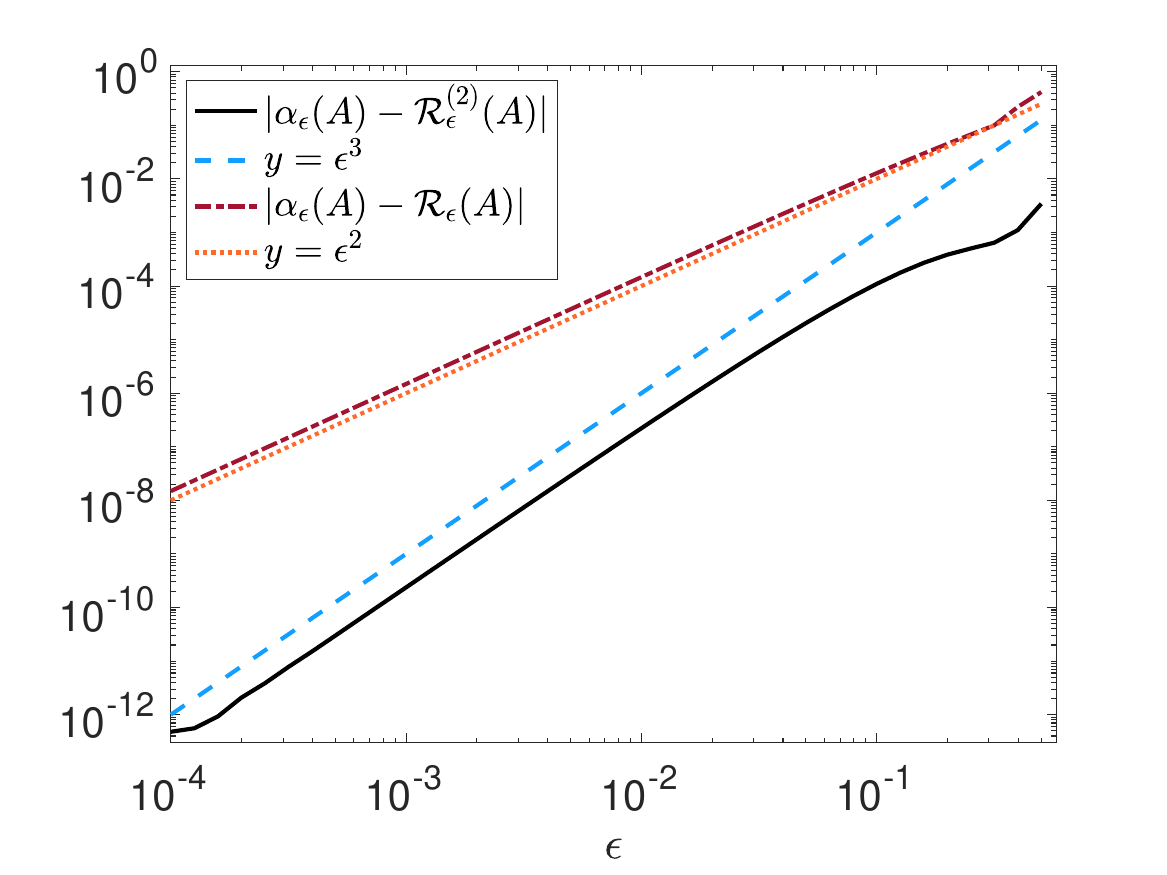}}	
	\end{tabular}
		\caption{The errors of the first-order approximation $\mathcal{R}_{\epsilon}(A)$
		and second-order approximation $\mathcal{R}^{(2)}_{\epsilon}(A)$
		for $\alpha_{\epsilon}(A)$ as a function of $\epsilon$ on two random matrices.}
		\label{fig:error_mat1}
\end{figure}
\end{example}

\begin{example}\label{ex:psa_mat2}
\rm{We consider a static output feedback stabilization problem $A + \nu BC^T$ with respect to the real 
parameter $\nu$ (controller), where $A \in {\mathbb R}^{1006\times 1006}$, $B, C \in {\mathbb R}^{1006}$ 
are taken from the NN18 example in the $COMPl_eib$ collection \cite{Lei04}. The aim is to find $\nu$ such that
$A + \nu BC^T$ has all of its eigenvalues on the left half of the complex plane. In \cite{AliM24}},
rather than minimizing the spectral abscissa, the $\epsilon$-pseudospectral abscissa of 
$A(\nu) := A + \nu BC^T$ for $\epsilon = 0.2$ is minimized over all $\nu \in [-1,1]$.
We depict the approximation of $\alpha_{\epsilon}(\nu)$ with $\mathcal{R}^{(2)}_{\epsilon}(\nu)$ for $\nu \in [-1,1]$
in Figure \ref{fig:error_mat2}. In the left-hand plot of the figure, it is not possible to distinguish
$\alpha_{\epsilon}(\nu)$ from its approximation $\mathcal{R}^{(2)}_{\epsilon}(\nu)$ for $\epsilon = 0.2$.
In the right-hand plot, the errors appear to be decreasing in accordance with ${\mathcal O}(\epsilon^3)$
as $\epsilon$ is reduced from 0.2 to 0.05.
\begin{figure}
	\begin{tabular}{cc}
		\hskip -3.5ex
			\includegraphics[width = .52\textwidth]{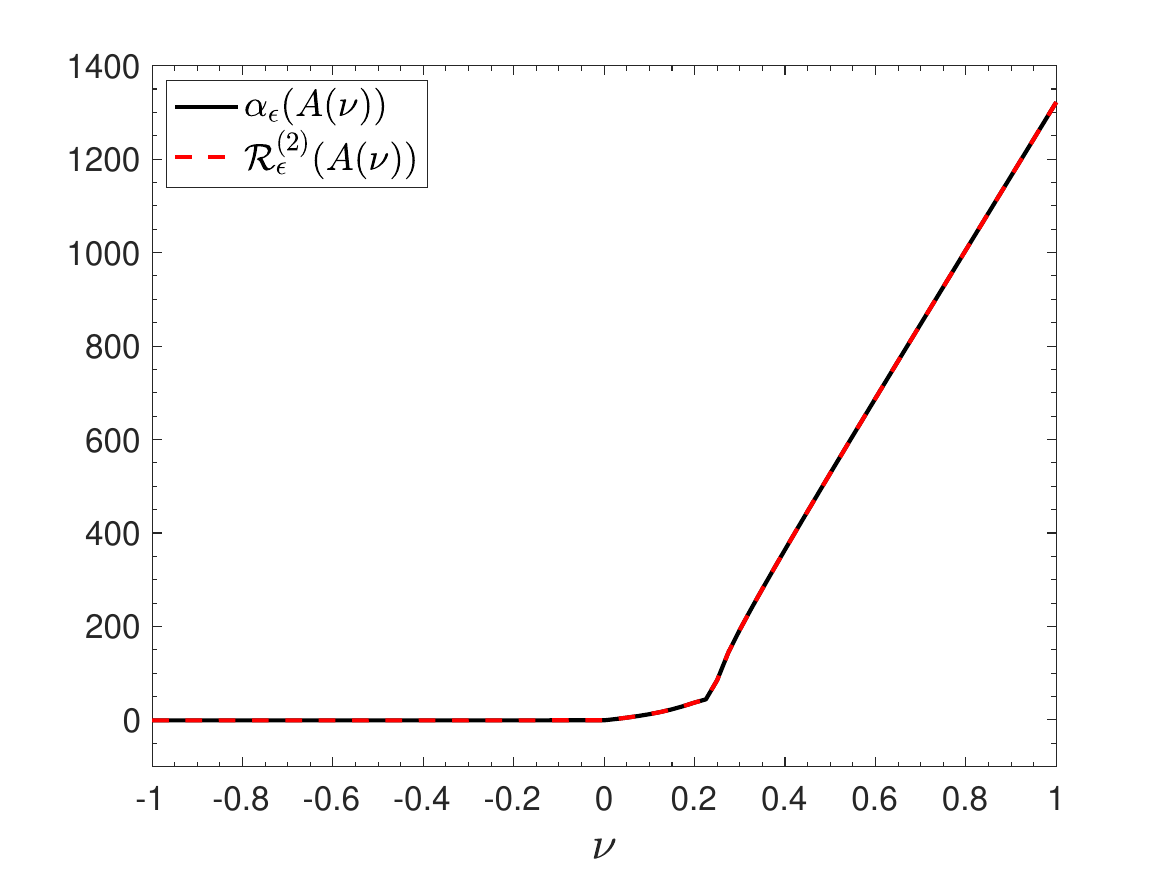} & 
			\hskip -1.5ex
			\includegraphics[width = .52\textwidth]{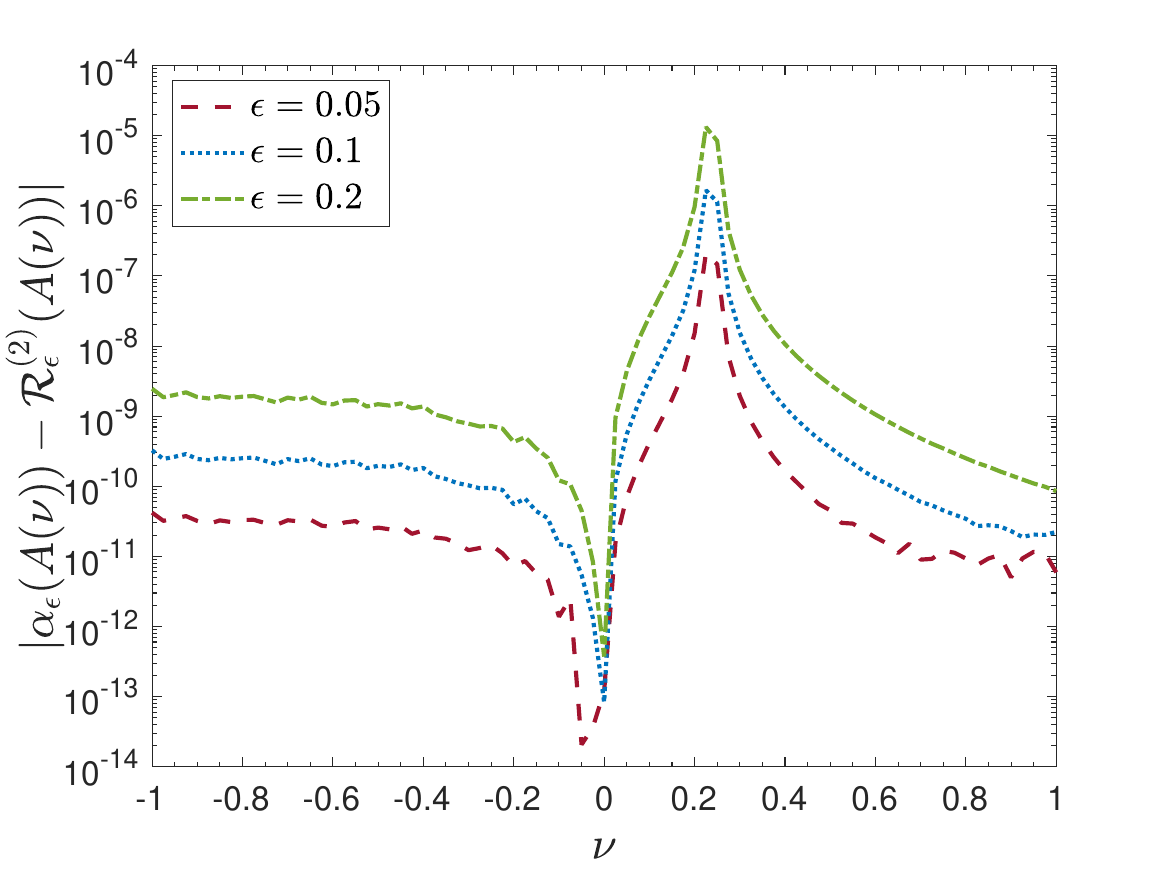} 
	\end{tabular}
		\caption{The approximation of $\alpha_{\epsilon}(A(\nu))$ with $\mathcal{R}^{(2)}_{\epsilon}(A(\nu))$
		for $\nu \in [-1,1]$,
		where $A(\nu) = A + \nu BC^T$ is the NN18 example from the $COMPl_eib$ collection. In the
		left-plot, $\epsilon = 0.2$.}
		\label{fig:error_mat2}
\end{figure}
\end{example}

\subsection{Fixed-point iteration for the $\epsilon$-pseudospectral abscissa of a matrix}
The first-order, second-order approximation of the previous two subsections yield estimates
of the $\epsilon$-pseudospectral abscissa of a matrix with errors 
${\mathcal O}(\epsilon^2)$, ${\mathcal O}(\epsilon^3)$, respectively.
For arbitrarily high accuracy, here, we briefly summarize the specialization of the fixed-point iterations 
in Section \ref{sec:fp_mat_val} to estimate the $\epsilon$-pseudospectral abscissa of a matrix-valued function
for a matrix $A \in {\mathbb C}^{n\times n}$. 
The approximation ideas of the previous two subsections can be employed to initialize
the fixed-point iteration below, but we postpone a proper discussion of this to Section \ref{sec:glob_opts}.


The matrix $A$ is initially perturbed by
$\Delta^{(0)} :=  y_0 x_0^\ast$, 
where $y_0, x_0$ are left, right
eigenvectors of $A$ corresponding to an eigenvalue $\lambda_R$ normalized so that
\[
	\|x_0 \|_2 = \| y_0 \|_2 = 1
	\quad	\text{and}		\quad
	y^\ast_0 x_0 \text{ is real and positive}	\:	,
\]
since
$\:
		\Delta^{(0)}	\, \in \,	
			\argmax
			\left\{
					\text{Re} \left\{ \mu'(0; \Delta, \lambda_R, A) \right\}
						\;	|	\;
					\Delta \in {\mathbb C}^{n\times n} \text{ s.t. } \| \Delta \|_2 \leq 1	
			\right\}
$.


Letting $z_1$ be the rightmost eigenvalue of $A + \epsilon \Delta^{(0)}$,
by the Eckart-Young-Mirsky theorem \cite[Theorem 2.5.3]{GolVL96},
the minimal perturbation $\underline{\Delta}^{(0)} \in {\mathbb C}^{n\times n}$ 
(with the smallest 2-norm possible)
such that $z_1$ is an eigenvalue of $A + \underline{\Delta}^{(0)}$ is given by
\[
	\underline{\Delta}^{(0)}	\;	:=	\;	-\gamma u_1 v_1^\ast
\] 
where $\gamma := \sigma_{\min}(z_1 I - A)$ and $u_1, v_1$ are corresponding
consistent unit left, right singular vectors, which also turn out to be left, right
eigenvectors of $A + \underline{\Delta}^{(0)}$ corresponding to its eigenvalue $z_1$. 
Normalizing the eigenvectors $u_1$, $v_1$ into  
$\widetilde{u}_1, \widetilde{v}_1$ so that
\[
	\| \widetilde{u}_1 \|_2 = \| \widetilde{v}_1 \|_2 = 1
	\quad	\text{and}		\quad
	\widetilde{u}_1^\ast \widetilde{v}_1 \text{ is real and positive}	,
\]
the matrix $\Delta^{(1)} := \widetilde{u}_1 \widetilde{v}_1^\ast$ satisfies
\[
		\Delta^{(1)}	 \in 
		\argmax
		\left\{	
				\text{Re} \left\{ \mu'(0; \Delta, z_1, A + \underline{\Delta}^{(0)}) \right\}		
					\:	|	\:
					\Delta \in {\mathbb C}^{n\times n} \text{ s.t. } \| \Delta \|_2 \leq 1	
		\right\}	\:	.
\]
Setting $z_2$ as the rightmost eigenvalue of $A + \epsilon \Delta^{(1)}$, we
repeat the procedure. The fixed-point iteration specialized
for the $\epsilon$-pseudospectral abscissa 
of a matrix is given in Algorithm \ref{alg:fixed_point_matrix} below.

\begin{algorithm}
\begin{algorithmic}[1]
	\REQUIRE{A matrix $A\in {\mathbb C}^{n\times n}$, a real number $\epsilon > 0$,
				tolerance for termination $\mathsf{tol} > 0$.}
	\ENSURE{Estimates $f$ for $\alpha_{\epsilon}(A)$ and $z$
						for globally rightmost point in $\Lambda_{\epsilon}(A)$.}	
	\vskip 1.2ex
	\STATE{$z_0 \; \gets$ an eigenvalue of $A$.}\label{line_alg4A:zm1}
	
	\vskip 1.2ex
	
	\STATE{$x, y \; \gets$ unit right, left eigenvectors corr. to
				rightmost eigenvalue of $A$.}
	\vskip 1.2ex
	\STATE{$y \; \gets \;\,  \{ (y^\ast x) / | y^\ast x| \} y $.}
	\vskip 1.2ex
	\STATE{$\Delta^{(0)} \; \gets \; y x^\ast$.}\label{line_alg4:D0}
	\vskip 1.2ex
	\FOR{$k=1,2,\dots$}
	\vskip 1.2ex
	\STATE{$z_k \; \gets \;$ rightmost eigenvalue of $A + \epsilon \Delta^{(k-1)}$.}
	\vskip 1.2ex
	\STATE{\textbf{If} $| z_{k} - z_{k-1} | < \mathsf{tol} \;$ \textbf{return} 
						$z \gets z_k$, $f \gets \text{Re}(z_k)$.}
	\vskip 1.2ex
	\STATE{$v_k, u_k \; \gets \;$ unit consistent right, left
							singular vectors corr. to $\sigma_{\min}(z_k I_n - A)$.} 
	\vskip 1.2ex
	\STATE{$u_k \; \gets \;\, \{ ( u_k^\ast v_k ) / | u_k^\ast v_k | \}  u_k $.}
	\vskip 1.2ex
	\STATE{$\Delta^{(k)} \gets u_k v_k^\ast$.}
	\vskip 1.5ex
	\ENDFOR
\end{algorithmic}
\caption{Fixed-point iteration for the pseudospectral abscissa of a matrix}
\label{alg:fixed_point_matrix}
\end{algorithm}

\subsubsection{Fixed-points of Algorithm \ref{alg:fixed_point_matrix}.}
Consider Algorithm \ref{alg:fixed_point} applied to $T(\lambda) = \lambda I - A$
(i.e., $t_1(\lambda) = \lambda$, $t_2(\lambda) = -1$ and $T_1 = I$, $T_2 = A$)
with the weights $w_1 = 0$, $w_2 = 1$. The fixed-point map $\zeta$ in 
Section \ref{sec:fp_analysis} in this special case, letting $\widetilde{z} = \zeta(z)$,
is defined as follows:
\begin{enumerate}
	\item Let $v, u$ be consistent unit right, left
							singular vectors corresponding to $\sigma_{\min}(zI - A)$.
							
	\item Set $\widetilde{u} := - \left\{ u^\ast  v  / | u^\ast v  | \right\} u $.
	\item Set  
			$\Delta  := -\widetilde{u} v^\ast$.
	\item $\widetilde{z}$ is the rightmost eigenvalue of 
	$T(\lambda) + \epsilon (- \Delta) = \lambda I - (A + \epsilon \Delta)$, that is the rightmost 
	eigenvalue of 
	$A - \epsilon \widetilde{u} v^\ast	= 	A  +   \epsilon \left\{ u^\ast  v  / | u^\ast v  | \right\} u v^\ast$.
\end{enumerate}
Now looking at one iteration of Algorithm \ref{alg:fixed_point_matrix},
for the associated fixed-point function $\zeta_4$,
the point $\widetilde{z} = \zeta_4(z)$ is the rightmost eigenvalue
of $A  +   \epsilon \left\{ u^\ast  v  / | u^\ast v  | \right\} u v^\ast$. Hence,
the algorithms have the same fixed-point functions. It can also be verified 
that $z_1$ for both algorithms are the same, provided that the algorithms 
are started with the same $z_0$. It follows that
Algorithm \ref{alg:fixed_point_matrix} and Algorithm \ref{alg:fixed_point}
applied to $T(\lambda) = \lambda I - A$ with $w_1 = 0$, $w_2 = 1$
generate the same sequence $\{ z_k \}$. The next result is now
an immediate corollary of Theorem \ref{thm:fp_main_result}, noting that
$\Lambda_{\epsilon}(A) = \Lambda_{\epsilon}(T)$ for 
$T(\lambda) = \lambda I - A$ with $w_1 = 0$, $w_2 = 1$, and the definition
of an rbvt point simplifies as follows.
\begin{definition}\label{defn:rbvt_point_matrix} 
\hskip -.45ex
We call $z \in {\mathbb C}$ an rbvt (right boundary with a vertical tangent) point in $\Lambda_{\epsilon}(A)$ if 
\begin{enumerate}
	\item the smallest singular value of $zI - A$ is simple,
	\item $z$ is on the boundary of $\Lambda_{\epsilon}(A)$, and
	\item $u^\ast v$ is real and positive, where $u$, $v$ denote a pair of consistent
	left, right singular vectors corresponding to $\sigma_{\min}(zI - A)$. 
\end{enumerate} 
\end{definition}
\begin{theorem}\label{thm:convergence_matrix_case}
Let $z \in \Lambda_{\epsilon}(A)$ be such that the smallest singular
value of $zI - A$ is simple, and, denoting with $u$, $v$ a pair of left,
right singular vectors corresponding to $\sigma_{\min}(zI - A)$, such that 
$u^\ast v \neq 0$.  Moreover, let $\zeta_4$ be the fixed-point
map associated with Algorithm \ref{alg:fixed_point_matrix}.
\begin{enumerate}
	\item If $z$ is not an rbvt point in $\Lambda_{\epsilon}(A)$,
	then $z$ is not a fixed-point of $\zeta_4$.
	\item If $z$ is the unique globally rightmost point in $\Lambda_{\epsilon}(A)$,
	then $z$ is a fixed-point of $\zeta_4$.
\end{enumerate}
\end{theorem}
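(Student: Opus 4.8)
The plan is to obtain Theorem~\ref{thm:convergence_matrix_case} as a specialization of Theorem~\ref{thm:fp_main_result} to the matrix-valued function $T(\lambda) = \lambda I - A$, i.e.\ $\kappa = 2$, $t_1(\lambda) = \lambda$, $t_2(\lambda) = -1$, $T_1 = I$, $T_2 = A$, together with the weights $w_1 = 0$, $w_2 = 1$. For this $T$ one has $\Lambda_{\epsilon}(T) = \Lambda_{\epsilon}(A)$, and, as already argued in the discussion preceding the theorem, the iterates produced by Algorithm~\ref{alg:fixed_point_matrix} coincide with those produced by Algorithm~\ref{alg:fixed_point} applied to this $T$; in particular the two associated fixed-point maps agree, $\zeta_4 = \zeta$ on ${\mathbb C}$. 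It then remains only to verify that the hypotheses of the theorem and the notion of an rbvt point in Definition~\ref{defn:rbvt_point_matrix} are precisely the specializations of ``nondegenerate'' (Definition~\ref{defn:nondegenerate}) and ``rbvt point'' (Definition~\ref{defn:rbvt_point}) to this particular $T$.

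First I would record the simplifications for $T(\lambda) = \lambda I - A$ with $w_1 = 0$, $w_2 = 1$: the denominator $\sqrt{w_1^2|t_1(z)|^2 + w_2^2|t_2(z)|^2}$ is identically one, so $M(\lambda) = T(\lambda) = \lambda I - A$ in (\ref{eq:defn_M}), $T'(z) \equiv I$, and ${\mathcal M}(s_1,s_2) = (s_1 + {\rm i}s_2)I - A$ gives ${\mathcal M}_{s_1} \equiv I$, ${\mathcal M}_{s_2} \equiv {\rm i}I$ in (\ref{eq:der_M}); moreover the quantity $\delta$ in the definition of $\zeta$ vanishes since $w_1 = 0$ and $t_2'(\lambda) = 0$. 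Substituting these into (\ref{eq:nond_opt_map}), a one-line computation gives ${\mathcal S}(z) = {\rm Re}\{u^\ast v\} - {\rm i}\,{\rm Re}\{{\rm i}\,u^\ast v\} = u^\ast v$, where $u$, $v$ are consistent unit left, right singular vectors of $zI - A$ corresponding to $\sigma_{\min}(zI - A)$. Hence the condition ``$\sigma_{\min}(T(z))$ simple and ${\mathcal S}(z) \neq 0$'' of Definition~\ref{defn:nondegenerate} is exactly ``$\sigma_{\min}(zI - A)$ simple and $u^\ast v \neq 0$'', which is the hypothesis of Theorem~\ref{thm:convergence_matrix_case}, and ``${\mathcal S}(z)$ real and positive'' in Definition~\ref{defn:rbvt_point} is exactly ``$u^\ast v$ real and positive'', so Definition~\ref{defn:rbvt_point_matrix} coincides with Definition~\ref{defn:rbvt_point} for this $T$. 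With these identifications, parts~1 and~2 of the statement follow verbatim from parts~1 and~2 of Theorem~\ref{thm:fp_main_result}, using $\zeta = \zeta_4$ and $\Lambda_{\epsilon}(T) = \Lambda_{\epsilon}(A)$.

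The one place that genuinely needs care — and the main obstacle, such as it is — is the verification $\zeta_4 = \zeta$, because of the sign bookkeeping induced by $t_2(\lambda) = -1$. I would chase a single step: with $u$, $v$ the singular vectors of $zI - A$, step~2 of $\zeta$ gives $\widetilde u = -\{(u^\ast v)/|u^\ast v|\}u$ (using $\delta = 0$), step~3 gives $\Delta T_2 = \overline{t_2(z)}\,\widetilde u v^\ast = -\widetilde u v^\ast$ with $\Delta T_1 = 0$, so the effective perturbation is $\Delta T(\lambda) = t_2(\lambda)w_2\Delta T_2 = \widetilde u v^\ast$, and the rightmost eigenvalue sought is that of $(T + \epsilon \Delta T)(\lambda) = \lambda I - (A - \epsilon\widetilde u v^\ast) = \lambda I - (A + \epsilon\{(u^\ast v)/|u^\ast v|\}u v^\ast)$; on the other side, Algorithm~\ref{alg:fixed_point_matrix} sets $u \gets \{(u^\ast v)/|u^\ast v|\}u$ and $\Delta = u v^\ast$, so it seeks the rightmost eigenvalue of $A + \epsilon\{(u^\ast v)/|u^\ast v|\}u v^\ast$ — the same matrix, whence $\zeta_4(z) = \zeta(z)$. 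A parallel check that the initializations agree (both set $\Delta^{(0)} = y x^\ast$ with $y$ normalized so that $y^\ast x = y^\ast T'(z_0) x$ is real and positive, and both take $z_1$ to be the rightmost eigenvalue of $A + \epsilon y x^\ast$) completes the reduction, and the conclusion then follows directly from Theorem~\ref{thm:fp_main_result}.
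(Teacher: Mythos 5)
Your proof is correct and follows essentially the same route as the paper: the paper likewise obtains the theorem as an immediate corollary of Theorem \ref{thm:fp_main_result} by checking that $\zeta_4$ coincides with the map $\zeta$ for $T(\lambda)=\lambda I - A$ with $w_1=0$, $w_2=1$ (so that both maps send $z$ to the rightmost eigenvalue of $A+\epsilon\{u^\ast v/|u^\ast v|\}uv^\ast$), and that ${\mathcal S}(z)=u^\ast v$, whence the nondegeneracy and rbvt conditions specialize exactly as you state. The only (inessential) slip is in your parenthetical on initializations: Algorithm \ref{alg:fixed_point} normalizes $y$ so that $y^\ast T'(z_0)x$ is real and \emph{negative}, the sign being absorbed by $t_2(\lambda)=-1$; this does not affect the theorem, which concerns only the fixed points of $\zeta_4$.
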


\noindent
Analogous to the earlier conclusions made regarding the convergence 
of Algorithm \ref{alg:fixed_point}, we infer from Theorem \ref{thm:convergence_matrix_case} 
that if the sequence $\{ z_k \}$ by Algorithm \ref{alg:fixed_point_matrix} 
converges to a point $z_\ast$ such that 
\begin{enumerate}
	\item $\sigma_{\min}(z_\ast I - A)$ is simple, and 
	\item $u^\ast v \neq 0$ for the left, right singular vectors $u$, $v$ 
	corresponding to $\sigma_{\min}(z_\ast I - A)$, 
\end{enumerate}
the converged point $z_\ast$ is an rbvt point in $\Lambda_{\epsilon}(A)$,
probably a locally rightmost point in $\Lambda_{\epsilon}(A)$.

\begin{example}\label{eq:psa_mat3}
Let us again consider the matrix $A$ of size $1006\times 1006$
from the NN18 example (see Example \ref{ex:psa_mat2}).
We compute the globally rightmost point in $\Lambda_{\epsilon}(A)$ 
for $\epsilon = 40$ and $\epsilon = 80$ using Algorithm \ref{alg:fixed_point_matrix}
with tolerance ${\sf tol} = 10^{-10}$. Initially, $z_0$ is set equal to the 
eigenvalue with the largest imaginary part. The computed rightmost points
for $\epsilon = 40$, $\epsilon = 80$ are $39 + 40{\rm i}$, $79 + 40{\rm i}$,
respectively, and match with the points returned by the globally convergent
criss-cross algorithm \cite[Algorithm 3.1]{BurLO03}. These converged points are displayed
in Figure \ref{fig:mat_psa} together with the plots of the boundary of $\Lambda_{\epsilon}(A)$
for $\epsilon = 40$, $\epsilon = 80$. For both values of $\epsilon$,
the algorithm terminates after three iterations. The iterates of 
Algorithm \ref{alg:fixed_point_matrix}  are listed in Table \ref{tab:mat_iterates}.
\end{example}

\begin{figure}
\begin{floatrow}
\ffigbox{%
	\includegraphics[width = .38\textwidth]{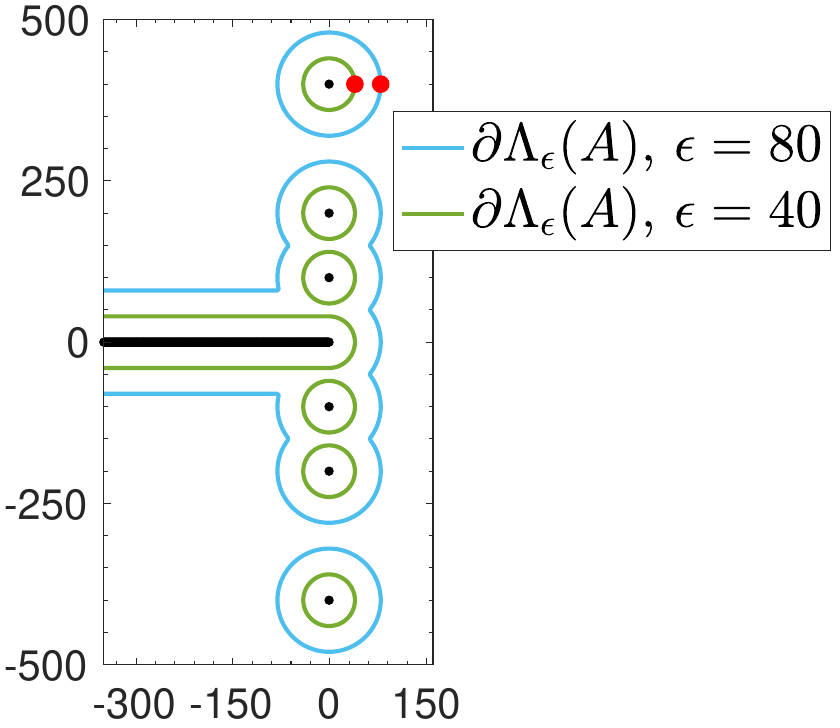} 
}{%
  \caption{  The boundary of $\Lambda_{\epsilon}(A)$ for the matrix $A$ from
  the NN18 example. Black and red dots correspond to eigenvalues
  and computed rightmost points in $\Lambda_{\epsilon}(A)$. }
  \label{fig:mat_psa}
}
\capbtabbox{%
\begin{tabular}{c}
    \begin{tabular}{|c||c|}
\hline
	$k$		&	Alg.~\ref{alg:fixed_point_matrix}			\\
\hline
\hline
	0		& \hskip -.5ex	$-1.00000 + \underline{400.00000}{\mathrm i}$		\\
	1		& \hskip -.5ex	$\: \underline{3}8.82335 + \underline{400.00000}{\mathrm i}$	\\
	2		& \hskip -.5ex	$\: \underline{39.00000} + \underline{400.00000}{\mathrm i}$	\\
\hline
    \end{tabular}		\\[3em]	
        \begin{tabular}{|c||c|}
	\hline
	$k$		&	Alg.~\ref{alg:fixed_point_matrix}		\\	
	\hline
	\hline
	0		& \hskip -.5ex	$-1.00000 + \underline{400.00000}{\mathrm i}$		\\
	1		& \hskip -.5ex	$\: \underline{7}8.64670 + \underline{400.00000}{\mathrm i}$	\\
	2		& \hskip -.5ex	$\: \underline{79.00000} + \underline{400.00000}{\mathrm i}$	\\
	\hline
   \end{tabular}		\\[3em]
\end{tabular} 
}{%
  \caption{ Concerns the application of Alg.~\ref{alg:fixed_point_matrix}
  		 to $A$ from the NN18 example.
  		(Top) Iterates $z_k$, $\epsilon = 40$. 
		 (Bottom) Iterates $z_k$, $\epsilon = 80$.}%
	\label{tab:mat_iterates}
}
\end{floatrow}
\end{figure}

\section{Initializations for global solutions of large-scale problems}\label{sec:glob_opts}
For the large-scale setting, the existing iterative
algorithms for the computation of $\alpha_{\epsilon}(T)$
(including the algorithms in \cite{MicG12, MeeMMV17}, and
Algorithms \ref{alg:fixed_point}--\ref{alg:fixed_point_matrix} below) 
suffer from local convergence. 
Initializing the locally convergent iterative algorithms with points close to the global
rightmost points is a remedy, and should usually result in convergence to a global rightmost point.


Some of the iterative algorithms remain throughout in the same connected component
of $\Lambda_{\epsilon}(T)$ from where it is initiated. Thus, to attain global convergence for an 
iterative algorithm that is guaranteed to 
converge only locally, it may be important 
to start with an initial point lying in the connected component that contains the rightmost 
point in $\Lambda_{\epsilon}(T)$ globally. To this end, suppose
$\mu_\ast \in \argmax_{\mu_0 \in \Lambda_{\epsilon}(T)} {\mathcal R}(\epsilon; \mu_0)$.
By Theorem \ref{thm:fo_estimate}, assuming its assumptions are satisfied,
the definition of ${\mathcal R}(\epsilon; \mu_\ast)$ and 
the compactness of ${\mathcal S}$, there is $\Delta_\ast \in {\mathcal S}$ such that
\begin{equation}\label{eq:est_rightmost1}
	\alpha_{\epsilon}(T) \; = \; {\mathcal R}(\epsilon; \mu_\ast)
					\; = \; \text{Re}\{ \mu(\epsilon; \Delta_\ast , \mu_\ast) \}	\:	.
\end{equation}
Moreover,
since $\epsilon \Delta_\ast \in {\mathcal S}_\epsilon$, we have 
$\mu(\epsilon; \Delta_\ast , \mu_\ast) \in \Lambda_{\epsilon}(T)$, so
$\mu(\epsilon; \Delta_\ast , \mu_\ast)$ is a globally rightmost point in $\Lambda_{\epsilon}(T)$.
There is a continuous map, namely ${\mathcal C}(\eta) : [0, \epsilon] \rightarrow \Lambda_{\epsilon}(T)$,
${\mathcal C}(\eta) := \mu(\eta; \Delta_\ast , \mu_\ast)$ such that ${\mathcal C}(0) = \mu_\ast$
and ${\mathcal C}(\epsilon) = \mu(\epsilon; \Delta_\ast , \mu_\ast)$. This shows that
$\mu_\ast$ and the rightmost point $\mu(\epsilon; \Delta_\ast , \mu_\ast)$ in $\Lambda_{\epsilon}(T)$
are in the same connected component of $\Lambda_{\epsilon}(T)$.
\begin{theorem}
Suppose that the assumptions of Theorem \ref{thm:fo_estimate} hold. Then
any eigenvalue $\mu_\ast \in \argmax_{\mu_0 \in \Lambda_{\epsilon}(T)} {\mathcal R}(\epsilon; \mu_0)$
is in a connected component of $\Lambda_{\epsilon}(T)$ that contains a rightmost point of 
$\Lambda_{\epsilon}(T)$ globally.
\end{theorem}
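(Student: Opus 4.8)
The plan is to make rigorous the path argument already sketched in the paragraph preceding the statement. First I would fix $\mu_\ast$ in the set $\argmax_{\mu_0 \in \Lambda(T)} {\mathcal R}(\epsilon; \mu_0)$ (nonempty by part (ii) of Assumption~\ref{ass:psa_nlevp}) and invoke the first equality in (\ref{eq:main_char_psa_nle}) of Theorem~\ref{thm:fo_estimate} to get $\alpha_{\epsilon}(T) = {\mathcal R}(\epsilon; \mu_\ast)$. Then, using the definition of ${\mathcal R}(\epsilon; \mu_\ast)$ in (\ref{eq:defn_Reps}) together with the compactness of ${\mathcal S}$ and the continuity of $\Delta \mapsto \mathrm{Re}\{\mu(\epsilon; \Delta, \mu_\ast)\}$ on ${\mathcal S}$, I would extract a maximizer $\Delta_\ast = (\Delta T_{\ast,1}, \dots, \Delta T_{\ast,\kappa}) \in {\mathcal S}$ with ${\mathcal R}(\epsilon; \mu_\ast) = \mathrm{Re}\{\mu(\epsilon; \Delta_\ast, \mu_\ast)\}$.

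The core of the argument is to produce a connected subset of $\Lambda_{\epsilon}(T)$ joining $\mu_\ast$ to a globally rightmost point. To this end I would define ${\mathcal C} : [0,\epsilon] \to {\mathbb C}$ by ${\mathcal C}(\eta) := \mu(\eta; \Delta_\ast, \mu_\ast)$; by the properties of the analytic eigenvalue branch recorded in Section~\ref{sec:fo_nlevp} (well-posedness from Theorem~\ref{thm:eig_well_posed} and $[0,\epsilon] \subset U$), the map ${\mathcal C}$ is continuous on $[0,\epsilon]$, with ${\mathcal C}(0) = \mu_\ast$ and $\mathrm{Re}\,{\mathcal C}(\epsilon) = {\mathcal R}(\epsilon; \mu_\ast) = \alpha_{\epsilon}(T)$. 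Next I would verify that ${\mathcal C}(\eta) \in \Lambda_{\epsilon}(T)$ for \emph{every} $\eta \in [0,\epsilon]$: since $\|[\Delta T_{\ast,1}\ \cdots\ \Delta T_{\ast,\kappa}]\|_2 \le 1$, the scaled tuple $\eta \Delta_\ast$ lies in ${\mathcal S}_\epsilon$, the associated perturbation $\eta \Delta T_\ast(\lambda) = \sum_{j=1}^{\kappa} t_j(\lambda) w_j (\eta \Delta T_{\ast,j})$ lies in ${\mathcal P}_\epsilon$, and by construction ${\mathcal C}(\eta)$ is an eigenvalue of $(T + \eta \Delta T_\ast)(\lambda)$, so ${\mathcal C}(\eta) \in \Lambda(T + \eta \Delta T_\ast) \subseteq \Lambda_{\epsilon}(T)$ by (\ref{eq:defn_NEP_pspec}). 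In particular ${\mathcal C}(\epsilon) \in \Lambda_{\epsilon}(T)$ has real part $\alpha_{\epsilon}(T)$, hence is a globally rightmost point of $\Lambda_{\epsilon}(T)$.

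To conclude, I would observe that ${\mathcal C}([0,\epsilon])$ is the continuous image of the connected set $[0,\epsilon]$, hence a connected subset of $\Lambda_{\epsilon}(T)$ containing both $\mu_\ast = {\mathcal C}(0)$ and the globally rightmost point ${\mathcal C}(\epsilon)$; therefore $\mu_\ast$ and ${\mathcal C}(\epsilon)$ lie in one and the same connected component of $\Lambda_{\epsilon}(T)$, which is the assertion. I do not expect a serious obstacle here: the only point requiring care is that ${\mathcal C}(\eta)$ must stay in $\Lambda_{\epsilon}(T)$ throughout the parameter interval and remain the specific analytic branch emanating from $\mu_\ast$, and both are guaranteed by Theorem~\ref{thm:eig_well_posed} and the standing hypothesis $[0,\epsilon]\subset U$ for the eigenvalue $\mu_\ast$.
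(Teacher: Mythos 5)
Your proposal is correct and follows essentially the same route as the paper: identify a maximizing perturbation $\Delta_\ast \in {\mathcal S}$ via compactness and Theorem \ref{thm:fo_estimate}, then connect $\mu_\ast$ to the globally rightmost point $\mu(\epsilon;\Delta_\ast,\mu_\ast)$ by the continuous eigenvalue path $\eta \mapsto \mu(\eta;\Delta_\ast,\mu_\ast)$, whose image stays in $\Lambda_{\epsilon}(T)$ because $\eta\Delta_\ast \in {\mathcal S}_\epsilon$ for all $\eta \in [0,\epsilon]$. Your write-up merely makes explicit the intermediate verification that the entire path lies in the pseudospectrum, which the paper leaves implicit.
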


The arguments above suggest that the iterative algorithms can be initialized with
\begin{equation}\label{eq:cru_eig}
	\widetilde{\mu}_\ast \in
	\argmax_{\mu_0 \in \Lambda(P)}
						\left\{
							\mathrm{Re}(\mu_0)		+
		\epsilon	\left\{	 
		 \frac{ \sqrt{w_1^2 |t_1(\mu_0)|^2 + \dots + w_{\kappa}^2 | t_\kappa(\mu_0) |^2}}{| y^\ast_{\mu_0} T'(\mu_0) x_{\mu_0} |} 	
		 		\right\}		
						\right\}	\:	
\end{equation}
using the first-order approximation of ${\mathcal R}(\epsilon; \mu_0)$ in (\ref{eq:R_approx}). 
In the special setting of computing $\alpha_{\epsilon}(A)$ for a matrix $A \in {\mathbb C}^{n\times n}$, 
the condition above to choose the initial point simplifies as
\begin{equation}\label{eq:cru_eig3}
	\widetilde{\mu}_\ast \in
	\argmax_{\mu_0 \in \Lambda(A)}
						\left\{
							\mathrm{Re}(\mu_0)		+
		\epsilon	\left( 
		 \frac{ 1 }{| y^\ast_{\mu_0} x_{\mu_0} |} 	
		 		\right)		
						\right\}
\end{equation}
based on the first-order approximation of ${\mathcal R}(\epsilon; \mu_0)$ in (\ref{eq:RA_approx}).
\begin{remark}\label{rem:init_st1}
The fixed-point iterations, i.e., Algorithms \ref{alg:fixed_point} and \ref{alg:fixed_point2m} 
(Algorithm \ref{alg:fixed_point_matrix}), 
proposed above for the pseudospectral abscissa of a matrix-valued function 
(respectively, of a matrix)
can be initialized by setting $z_0$ equal to $\widetilde{\mu}_\ast$ as in (\ref{eq:cru_eig}) 
(respectively, as in (\ref{eq:cru_eig3})).
\end{remark}


Perhaps even a better option for initialization is using an accurate estimate
for the global maximizer sought, i.e., a point guaranteed to lie inside $\Lambda_{\epsilon}(T)$
and that is close to the globally rightmost point in $\Lambda_{\epsilon}(T)$.
Let $\mu_\ast \in \argmax_{\mu_0 \in \Lambda_{\epsilon}(T)} {\mathcal R}(\epsilon; \mu_0)$
and $\Delta_\ast \in {\mathcal S}$ be the optimal perturbation satisfying (\ref{eq:est_rightmost1}).
As argued above, $\mu(\epsilon; \Delta_\ast , \mu_\ast)$ is globally the rightmost
point in $\Lambda_{\epsilon}(T)$. Neither optimal $\mu_\ast$ nor the optimal perturbation 
$\Delta_\ast$ is known in practice. Yet, we can approximate $\mu_\ast$ with
$\widetilde{\mu}_\ast$ based on (\ref{eq:cru_eig}), and $\Delta_\ast$
with $\widetilde{\Delta}_\ast = (\underline{\Delta T}_1 , \dots , \underline{\Delta T}_\kappa)$ 
for $\underline{\Delta T}_1, \dots, \underline{\Delta T}_\kappa$
as in (\ref{eq:opt_perturbation}) but by replacing $\mu_0$ with $\widetilde{\mu}_\ast$.
Moreover,
\begin{equation*}
\begin{split}
	& \mu(\epsilon ; \Delta_\ast, \mu_\ast)	\;	 \approx	\;	
	\mu(\epsilon ; \widetilde{\Delta}_\ast, \widetilde{\mu}_\ast)	\;	\\ 	
	&	\hskip 4.5ex	\approx	\;
	\widetilde{\mu}_\ast	+	\epsilon \mu'(0; \widetilde{\Delta}_\ast, \widetilde{\mu}_\ast)	
					\;	=	\;	
			\widetilde{\mu}_\ast	+
							\epsilon
							\left\{
			\frac{\sqrt{w_1^2 |t_1(\widetilde{\mu}_\ast)|^2 + \dots + w_{\kappa}^2 |t_{\kappa}(\widetilde{\mu}_\ast)|^2}}{| y^\ast T'(\widetilde{\mu}_\ast) x |}
							\right\}	\;	=:	\;	\widetilde{\mu}_{\epsilon}	\:	.	
\end{split}											
\end{equation*}	
Thus, in practice, after computing $\widetilde{\mu}_\ast$, 
$\widetilde{\Delta}_\ast = (\underline{\Delta T}_1 , \dots , \underline{\Delta T}_\kappa) \in {\mathcal S}$ and 
$\widetilde{\mu}_{\epsilon}$, it seems plausible to approximate $\mu(\epsilon ; \Delta_\ast, \mu_\ast)$ 
with the eigenvalue of $(T + \epsilon \widetilde{\Delta T}_\ast)(\lambda)$ closest to 
$\widetilde{\mu}_{\epsilon}$, where $\widetilde{\Delta T}_\ast(\lambda) := t_1(\lambda) w_1 \underline{\Delta T}_1 
						+ \dots + t_{\kappa}(\lambda) \underline{\Delta T}_\kappa$,
which we refer as \emph{the first-order strategy}.


In the matrix setting, for approximating $\mu(\epsilon; \Delta_\ast , \mu_\ast)$
with higher accuracy, the second-order formulas in Section \ref{sec:psa_so_approx} are also available to our use:
\begin{itemize}
\item
\textbf{Second-order strategy.}
As argued in Remark \ref{rmk:so_approx_prac}, 
the rightmost eigenvalue of $A + \epsilon \widetilde{\Delta}^{\mu_0}_{\ast}$
for the maximizer $\mu_0 \in \Lambda(A)$ of the maximization problem on right-hand side 
of (\ref{eq:so_approx_formula}) and 
for the corresponding matrix $\widetilde{\Delta}^{\mu_0}_{\ast}$ as in (\ref{eq:so_perturbation})
is possibly a good estimate for $\mu(\epsilon; \Delta_\ast , \mu_\ast)$. In particular,
the real part of this rightmost eigenvalue and $\alpha_{\epsilon}(A)$ differ by ${\mathcal O}(\epsilon^3)$
under simplicity assumptions on the eigenvalues of $A$ and nearby matrices.
However, this requires forming $\widetilde{\Delta}^{\mu_0}_{\ast}$, as well as
computing the rightmost eigenvalue of $A + \epsilon \widetilde{\Delta}^{\mu_0}_{\ast}$
for every $\mu_0 \in \Lambda(A)$. 
\vskip 1ex
\item
\textbf{Hybrid first-order, second-order strategy.}
To reduce the computational cost, 
the rightmost eigenvalue $\mu(\epsilon; \Delta_\ast , \mu_\ast)$ can be estimated by
the rightmost eigenvalue of $A + \epsilon \widetilde{\Delta}^{\mu_0}_{\ast}$ 
for $\widetilde{\Delta}_{\mu_0, \ast}$ as in (\ref{eq:so_perturbation}) only for a particular 
$\mu_0 \in \Lambda(A)$ from which the rightmost point in $\Lambda_{\epsilon}(A)$ is likely to originate, e.g.,
$\mu_0 = \widetilde{\mu}_\ast$ with $\widetilde{\mu}_\ast$ as in (\ref{eq:cru_eig3}). 
\end{itemize}


\begin{remark}\label{rem:init_psa}
Algorithm \ref{alg:fixed_point_matrix} to compute $\alpha_{\epsilon}(A)$ for a matrix $A$
can also be initialized as follows by using the estimate of $\mu(\epsilon; \Delta_\ast, \mu_\ast)$ 
from the hybrid first-order, second-order strategy above:
\begin{enumerate}
	\item[\bf (1)] 
	$z_0$ can be set equal to the estimate
	from the hybrid first-order, second-order strategy.
	\item[\bf (2)]
	$\Delta^{(0)}$ in line \ref{line_alg4:D0} of Algorithm \ref{alg:fixed_point_matrix} 
	can be formed as $\Delta^{(0)} = u_0 v_0^\ast$ in terms of unit left, right singular vectors $u_0$, $v_0$ 
	corresponding to $\sigma_{\min}(z_0 I - A)$ normalized so that $u_0^\ast v_0$ is real and positive.
\end{enumerate}
Algorithms \ref{alg:fixed_point} and \ref{alg:fixed_point2m} to compute $\alpha_{\epsilon}(T)$
for a matrix-valued function $T$ can also be initialized similarly, for instance by using the estimate 
of $\mu(\epsilon; \Delta_\ast, \mu_\ast)$ from the first-order strategy.
\end{remark}

\section{Software}\label{sec:software}
The MATLAB implementations of (i) Algorithm \ref{alg:fixed_point} and Algorithm \ref{alg:fixed_point2m}
to compute the $\epsilon$-pseudospectral abscissa of a quadratic matrix polynomial, and (ii) Algorithm \ref{alg:fixed_point_matrix} to compute the $\epsilon$-pseudospectral abscissa of a matrix
are publicly available \cite{AhmM25}. 

\medskip

\noindent
\textbf{Initializations.} 
The implementations of Algorithms \ref{alg:fixed_point} and  
\ref{alg:fixed_point2m} for a quadratic matrix-polynomial are initialized by default
as in Remark \ref{rem:init_st1}, i.e., by setting $z_0$ equal to an eigenvalue $\mu_\ast$ 
satisfying (\ref{eq:cru_eig}). 
On the other hand, the implementation of Algorithm \ref{alg:fixed_point_matrix}
for a matrix $A$ is initialized by default according to Remark \ref{rem:init_psa},
i.e., by setting $z_0$ equal to the estimate of the globally rightmost point in $\Lambda_{\epsilon}(A)$ 
from the hybrid first-order, second-order strategy.

\medskip

\noindent
\textbf{Restarts.}
All implementations have an optional parameter ${\mathcal N}$, which controls the number
of times the fixed-point iteration (i.e., Algorithm \ref{alg:fixed_point}, Algorithm \ref{alg:fixed_point2m} or
Algorithm \ref{alg:fixed_point_matrix}) is executed, with each execution starting from a different $z_0$.
The largest value returned by these executions is taken as the computed value of the
$\epsilon$-pseudospectral abscissa. To give the specifics
when ${\mathcal N} > 1$, 
for Algorithm \ref{alg:fixed_point} or Algorithm \ref{alg:fixed_point2m},
the algorithm is executed ${\mathcal N}$ times, each time with $z_0$ equal to
one of the ${\mathcal N}$ eigenvalues yielding the largest value of the metric on the right-hand 
side of (\ref{eq:cru_eig}). \\[.4em]
\noindent
For Algorithm \ref{alg:fixed_point_matrix} with ${\mathcal N} > 1$ to compute
$\alpha_{\epsilon}(A)$ for a matrix $A$,  first ${\mathcal N}$ eigenvalues of
$A$ yielding the largest values of the metric on the right-hand side of (\ref{eq:cru_eig3}) are computed.
Then, for each such eigenvalue $\mu_0$, an initialization point $z_0$ is 
obtained by using the second-order strategy. \\[.4em]
\noindent
With ${\mathcal N} = 1$, these strategies
reduce to the default initialization strategies explained above.

\section{Numerical results}\label{sec:num_examp}
Here we report numerical results with publicly available MATLAB implementations
described in the previous section.

In subsection \ref{subsec:numexp_poly}, we first provide a numerical comparison
of Algorithms \ref{alg:fixed_point} and \ref{alg:fixed_point2m} on quadratic matrix 
polynomials arising from damping optimization. We also illustrate their accuracy
by comparing the results returned by them with those by the criss-cross 
algorithm \cite[Algorithm 2.1]{MehM24}.

In subsection \ref{subsec:numexp_matrix}, we focus on 
Algorithm \ref{alg:fixed_point_matrix} for the matrix setting,
in particular present numerical results with this approach on a parameter-dependent
matrix related to stabilization by static output feedback. We also 
provide comparisons with the criss-cross algorithm for matrices \cite[Algorithm 3.1]{BurLO03}, 
as well as \cite[Algorithm PSA1]{GugO11}.

All numerical experiments in this section are performed using 
MATLAB 2024b on a MacBook Air 
with Mac OS~15.4 operating system, Apple M3 CPU and 16GB RAM.

The termination tolerance used for every execution of 
Algorithms \ref{alg:fixed_point}, \ref{alg:fixed_point2m}, \ref{alg:fixed_point_matrix}
and \cite[Algorithm PSA1]{GugO11} is $\mathsf{tol} = 10^{-8}$.
The condition that we employ for termination 
inside Algorithm \ref{alg:fixed_point_matrix}
in the experiments in subsection \ref{subsec:numexp_matrix}
is $| \text{Re}(z_{k+1}) - \text{Re}(z_{k}) | < 
				\mathsf{tol} \cdot \max\{1 \, , \,  | \text{Re}(z_{k}) | \}$,
which is the termination condition employed by the implementation of \cite[Algorithm PSA1]{GugO11}.
We rely on this condition rather than 
 $| \text{Re}(z_{k+1}) - \text{Re}(z_{k}) | < \mathsf{tol}$ 
 for the sake of a fair comparison with \cite[Algorithm PSA1]{GugO11}.

\subsection{Results on quadratic matrix polynomials}\label{subsec:numexp_poly}

\subsubsection{One-parameter damping example.}\label{sec:numexp_cont_param}
Consider the quadratic matrix polynomial 
$P(\lambda; \nu) = \lambda^2 M + \lambda C(\nu) + K$ with
$M, C(\nu), K \in {\mathbb R}^{20\times 20}$ from Example \ref{ex:damp1}
as in (\ref{eq:damping_ex_20by20}), where
$C_{\mathrm{int}} \, = \, 2\xi  M^{1/2} \sqrt{M^{-1/2} K M^{-1/2}} M^{1/2}$
and $\xi = 0.005$. Recall that this problem corresponds to a mass-spring-damper system
with an external damper with viscosity $\nu$ positioned on the second mass
\cite[Example 5.2]{MehM24}. We compute $\alpha_{\epsilon}(P(\cdot;\nu))$
for $\epsilon = 0.4$, weights $(w_1, w_2, w_3) = (1,1,1)$ and $\nu \in [0,100]$ 
(i.e., for every $\nu$ on a uniform grid for $[0,100]$)
using Algorithms \ref{alg:fixed_point} 
and \ref{alg:fixed_point2m} initialized according to (\ref{eq:cru_eig}), as well as 
the globally convergent criss-cross algorithm. They all return the same results 
up to prescribed tolerances throughout $\nu \in [0,100]$. In particular, a plot
of $\alpha_{\epsilon}(P(\cdot;\nu))$ computed by Algorithm \ref{alg:fixed_point}
over $\nu \in [0,100]$ is given on the left in Figure \ref{fig:error_poly_fp_NN18}
with the solid black curve, and
the plots of the values computed by Algorithm \ref{alg:fixed_point2m} and the 
criss-cross algorithm are indistinguishable. The absolute value of the difference
of the computed values by Algorithm \ref{alg:fixed_point} and the criss-cross 
algorithm, as well as the difference of the values by Algorithm \ref{alg:fixed_point2m} 
and the criss-cross algorithm are depicted on the right in Figure \ref{fig:error_poly_fp_NN18}
as a function of $\nu \in [0,100]$.
Both of the the differences are less than the prescribed error tolerance $10^{-8}$
throughout $[0,100]$, yet Algorithm \ref{alg:fixed_point} appears to be more accurate.

Additionally, we report the total times in seconds taken by the three algorithms
in Table \ref{tab:poly_fp_NN18_times}, as well as the average number of iterations
required by Algorithms \ref{alg:fixed_point}, \ref{alg:fixed_point2m} 
until termination in Table \ref{tab:poly_fp_NN18_times}. 
Algorithms \ref{alg:fixed_point}, \ref{alg:fixed_point2m} are significantly faster
than the criss-cross algorithm, yet return the same values as the criss-cross algorithm
up to prescribed error tolerance.
Among Algorithms \ref{alg:fixed_point}, \ref{alg:fixed_point2m}, the former requires
fewer iterations to satisfy the prescribed tolerance, leading also to slightly 
faster runtime.

We have alternatively initiated Algorithms \ref{alg:fixed_point} and \ref{alg:fixed_point2m}
with the rightmost eigenvalues. The computed values by both algorithms are the same
up to tolerance, and plotted on the left in Figure \ref{fig:error_poly_fp_NN18} with the dashed
orange curve. The plot shows that the computed values of $\alpha_{\epsilon}(P(\cdot;\nu))$
are now much smaller than the actual values, and this occurs due to convergence
to wrong locally rightmost points that are not rightmost globally. Especially, for
general matrix-valued functions including matrix polynomials, it seems essential
to initialize the iterative algorithms properly, possibly based on (\ref{eq:cru_eig}).
Perturbations of rightmost eigenvalues especially in the nonlinear eigenvalue setting
may not to lead to the globally rightmost 
points in the $\epsilon$-pseudospecrtum; see, e.g.,  Figure \ref{fig:poly_psa}, where
the rightmost eigenvalue is the eigenvalue closest to the origin, yet the least
sensitive eigenvalue.


\begin{figure}
	\begin{tabular}{cc}
		\hskip -1.5ex
		\includegraphics[width = .5\textwidth]{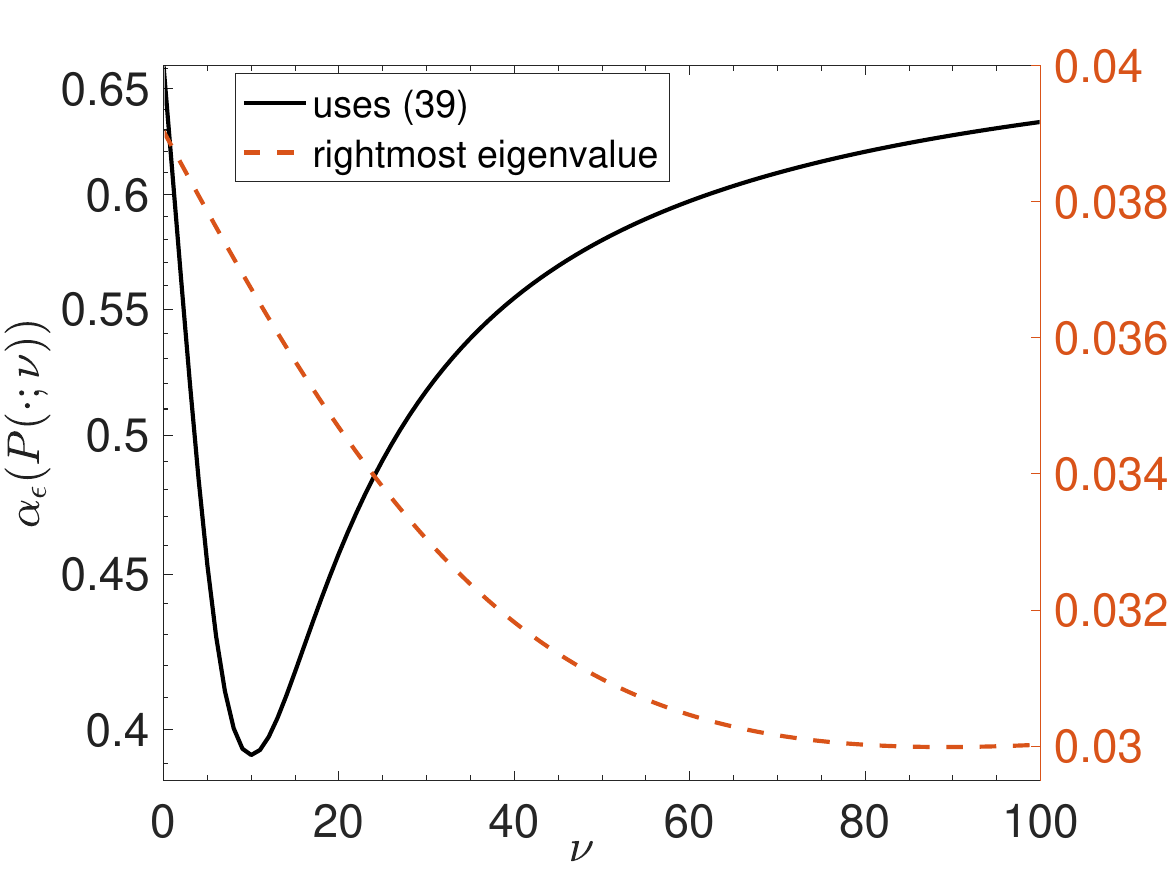} 		 & 
			\hskip .5ex
			\includegraphics[width = .5\textwidth]{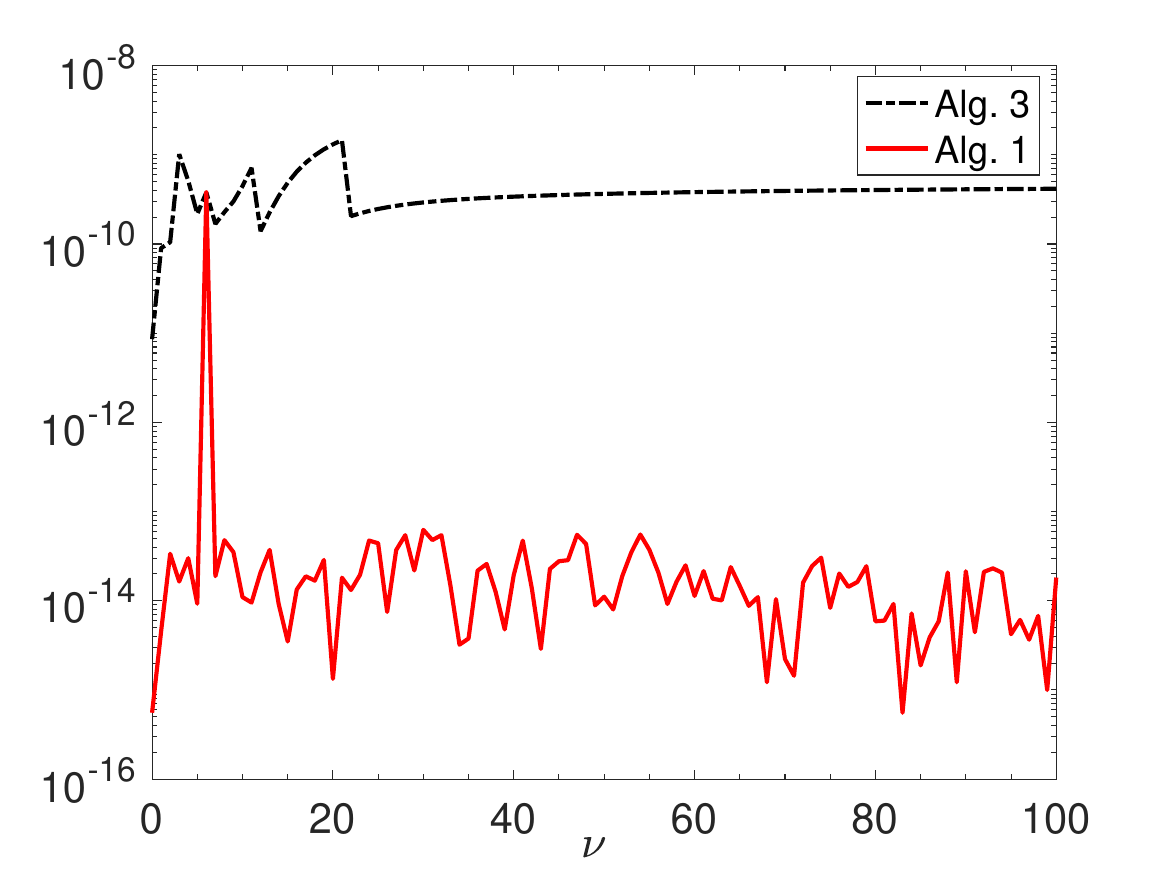}
	\end{tabular}
		\caption{
		This figure concerns the one-parameter damping problem
		in \S\ref{sec:numexp_cont_param}.  
		(Left) The computed value of $\alpha_{\epsilon}(P(\cdot ; \nu))$ 
		by Algorithm \ref{alg:fixed_point} as a function of $\nu \in [0,100]$.
		The plots of $\alpha_{\epsilon}(P(\cdot ; \nu))$ 
		by Algorithm \ref{alg:fixed_point2m} and the criss-cross algorithm
		are indistinguishable.
		(Right) The errors of Algorithms \ref{alg:fixed_point} and \ref{alg:fixed_point2m}, 
		that is the 
		absolute values of the differences between the values returned by these algorithms
		and the criss-cross algorithm.}
		\label{fig:error_poly_fp_NN18}
\end{figure}

\begin{table}
	\begin{tabular}{|c||cc|}
		\hline
								&	time		&	iterations		\\
		\hline
		\hline
		Alg. \ref{alg:fixed_point}		&	0.9		&	7.07			\\
		Alg. \ref{alg:fixed_point2m}	&	1.2 		&	10.75		\\
		criss-cross				&	38.1		&	---			\\
		\hline
	\end{tabular}
		\caption{  The total runtime
				in seconds of Algorithms \ref{alg:fixed_point}, 
				\ref{alg:fixed_point2m} and the criss-cross algorithm, and
				the average number of iterations until termination
				by Algorithms \ref{alg:fixed_point}, \ref{alg:fixed_point2m} for the
				example in \S\ref{sec:numexp_cont_param}.}
		\label{tab:poly_fp_NN18_times}
\end{table}

\subsubsection{Two-parameter damping example.}\label{sec:numexp_cont_param2}
Let us now consider the damping problem above with the addition of
a second damper on the nineteenth mass. The mass and stiffness
matrices $M$ and $K$ are as before, but the damping matrix becomes
\[
	C(\nu_1, \nu_2)
		=
	C_{\mathrm{int}}	\,	+	\,	\nu_1 e_2 e_2^T	
						\,	+	\,	\nu_2 e_{19} e_{19}^T  \:
\]
where we constrain the damping parameters $\nu_1$ and $\nu_2$
to lie in the intervals $[0,50]$ and $[0, 100]$, respectively. We compute
$\alpha_{\epsilon}(\cdot ; \nu)$, $\nu := (\nu_1, \nu_2)$ for $\epsilon = 0.2$
and the weights $(w_1,w_2,w_3) = (1,1,1)$
on the box $[0,50]\times [0,100]$ (i.e., on a uniform grid for this box)
using Algorithms \ref{alg:fixed_point} and \ref{alg:fixed_point2m} initialized according to (\ref{eq:cru_eig}),
and the criss-cross algorithm. All three return the same values of 
$\alpha_{\epsilon}(\cdot ; \nu)$ on the whole box up to the prescribed error 
tolerance. The plot of the computed $\alpha_{\epsilon}(\cdot ; \nu)$ by
 Algorithm \ref{alg:fixed_point} as a function of $\nu$ in the box 
 is given on the left in Figure \ref{fig:error_poly_fp_multi1}. In the same
 figure on the right, the error of  Algorithm \ref{alg:fixed_point}, that is the absolute
 value of the difference between the computed values by Algorithm \ref{alg:fixed_point}
 and the criss-cross algorithm, is shown on the box. 


\begin{figure}
	\begin{tabular}{cc}
		\hskip -3.5ex
			\includegraphics[width = .5\textwidth]{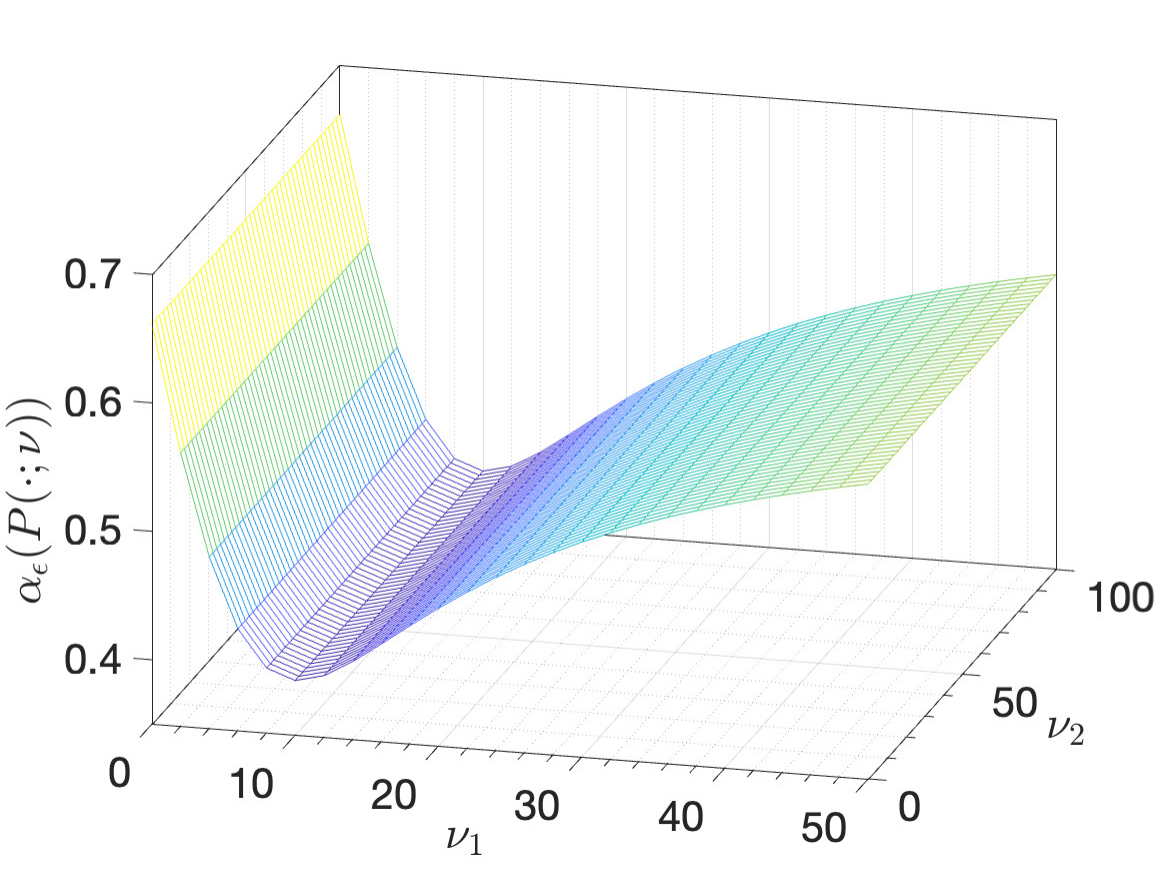} & 
			\hskip -1.5ex
			\includegraphics[width = .5\textwidth]{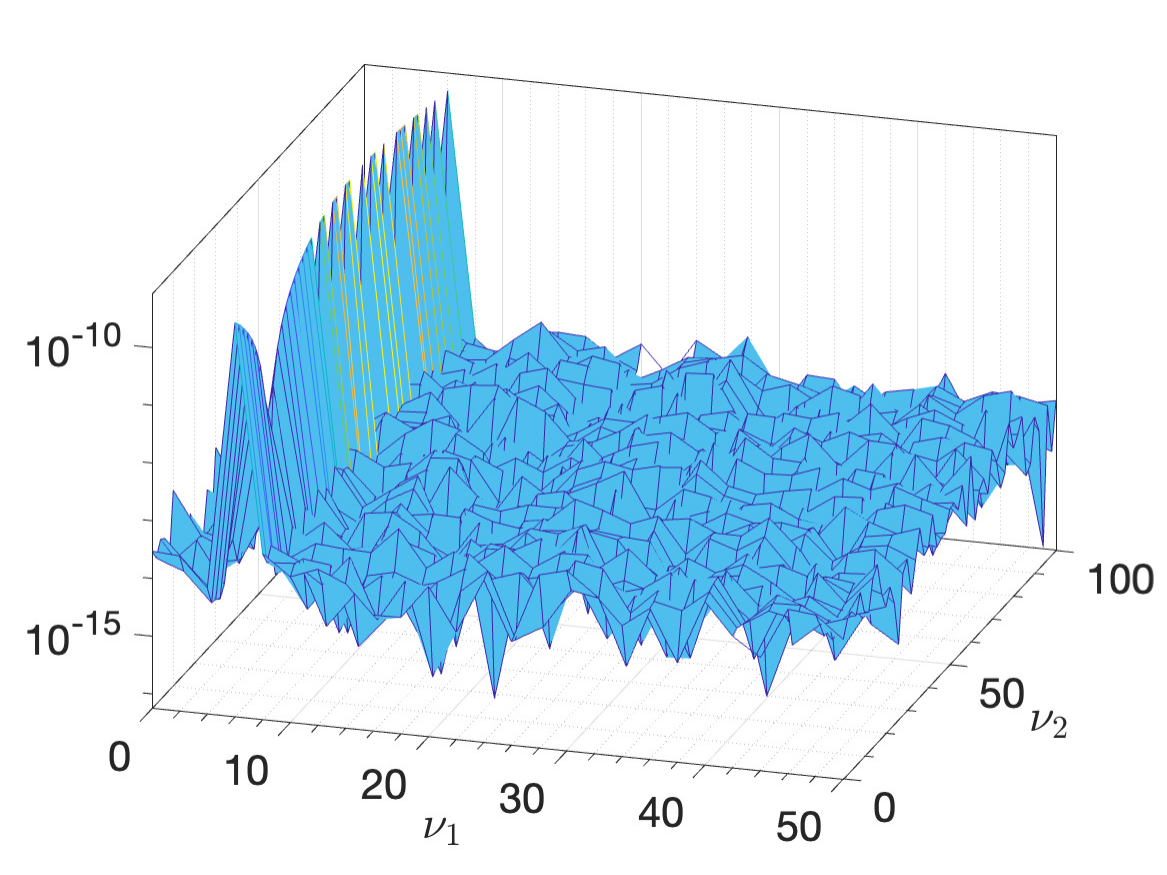} 
	\end{tabular}
		\caption{ The figure concerns the two-parameter damping problem 
		in \S\ref{sec:numexp_cont_param2}.  (Left) Plot of
		$\alpha_{\epsilon}(P(\cdot ; \nu))$ computed by Algorithm \ref{alg:fixed_point}
		on the box $[0,50] \times [0,100]$. (Right) Error of Algorithm \ref{alg:fixed_point}, 
		that is the absolute value of the gap between
		the computed values of $\alpha_{\epsilon}(P(\cdot ; \nu))$ by Algorithm \ref{alg:fixed_point} 
		and the criss-cross algorithm.}
		\label{fig:error_poly_fp_multi1}
\end{figure}

We additionally list the time taken by the algorithms, and the average number of
iterations by \ref{alg:fixed_point}, \ref{alg:fixed_point2m} 
until termination in Table \ref{tab:poly_fp_multi1_times} for this two-parameter
problem on the box. Once again, Algorithms \ref{alg:fixed_point}, \ref{alg:fixed_point2m} 
are significantly faster than the criss-cross algorithm, even though all three algorithms return 
the same values throughout the box up to the prescribed error tolerances. Similar to the
case for the one-parameter example above, Algorithm \ref{alg:fixed_point}
performs fewer iterations compared to Algorithm \ref{alg:fixed_point2m}, and,
as a result, Algorithm \ref{alg:fixed_point} has a lower runtime.

\begin{table}
	\begin{tabular}{|c||cc|}
		\hline
								&	time		&	iterations		\\
		\hline
		\hline
		Alg. \ref{alg:fixed_point}		&	13.3		&	8.31			\\
		Alg. \ref{alg:fixed_point2m}	&	18.8 		&	12.62		\\
		criss-cross				&	398.5		&	---			\\
		\hline
	\end{tabular}
		\caption{ The runtimes in seconds of the three algorithms and the number of iterations of 
				Algorithms \ref{alg:fixed_point}, \ref{alg:fixed_point2m} 
				for the two-parameter example in \S\ref{sec:numexp_cont_param2}. }
		\label{tab:poly_fp_multi1_times}
\end{table}

\subsubsection{Comparison of Algorithms \ref{alg:fixed_point} and \ref{alg:fixed_point2m} 
for varying values of $\epsilon$}
In this section, we take a closer look at the convergence of 
Algorithms \ref{alg:fixed_point} and \ref{alg:fixed_point2m} 
initialized according to (\ref{eq:cru_eig}) for varying values of $\epsilon$. In particular,
let us consider the computation of $\alpha_{\epsilon}(P(\cdot ; \nu))$ using these algorithms
for the one-parameter damping example in \S\ref{sec:numexp_cont_param}
for various values of $\nu \in [0,100]$, $(w_1, w_2, w_3) = (1,1,1)$ and $\epsilon = 0.2, 0.4, 0.8$.
The $\epsilon$-pseudospectrum is unbounded, and
$\alpha_{\epsilon}(P(\cdot ; \nu))$ is not bounded above regardless 
of $\nu$ for every $\epsilon > \sigma_{\min}(M) = 1$.  As $\epsilon < 1$
gets closer to one, it seems reasonable to expect that $\alpha_{\epsilon}(P(\cdot ; \nu))$
becomes more ill-conditioned and harder to compute.

\begin{table}
\begin{tabular}{c}
\begin{tabular}{|c||c|cc|ccc|cc|}
\hline
		&					& 	 \multicolumn{2}{c}{error}		&	\multicolumn{3}{|c}{time}	&	\multicolumn{2}{|c|}{iterations}			\\	
$\nu$	& $\alpha_{\epsilon} (P(\cdot ; \nu))$ &	Alg. \ref{alg:fixed_point}	 &	Alg. \ref{alg:fixed_point2m}		& Alg. \ref{alg:fixed_point}	& 	Alg. \ref{alg:fixed_point2m}		&	CCQ
							&  Alg. \ref{alg:fixed_point}		&  Alg. \ref{alg:fixed_point2m}	 \\[.2em]

\hline
\hline
0	&6.6147e-01	&	\phantom{-}5.6e-16	&	\phantom{-}8.6e-12	&	0.03	&0.04	&0.43	&7	&11	\\
10	&3.9242e-01	&	\phantom{-}1.1e-14	&	-4.5e-10	&0.03	&0.03		&0.14	&7	&9	\\
40	&5.5478e-01	&	-1.9e-14	&	-3.4e-10	&0.02	&0.04	&0.44	&7	&11	\\
100	&6.3385e-01	&	\phantom{-}1.8e-14	&	-4.2e-10	&0.02	&0.03	&0.43	&7	&11	\\
\hline 
\end{tabular}		\\
\phantom{aa} \\[-.1em]
\begin{tabular}{|c||c|cc|ccc|cc|}
\hline
		&					& 	 \multicolumn{2}{c}{error}		&	\multicolumn{3}{|c}{time}	&	\multicolumn{2}{|c|}{iterations}			\\	
$\nu$	& $\alpha_{\epsilon} (P(\cdot ; \nu))$ &	Alg. \ref{alg:fixed_point}	 &	Alg. \ref{alg:fixed_point2m}		& Alg. \ref{alg:fixed_point}	& 	Alg. \ref{alg:fixed_point2m}		&	CCQ
							&  Alg. \ref{alg:fixed_point}		&  Alg. \ref{alg:fixed_point2m}	 \\[.2em]

\hline
\hline
0	&1.4750e00	&	-2.2e-16	&	-1.5e-09	&	0.03	&0.04	&0.47	&12	&17	\\
10	&1.2856e00	&	\phantom{-}2.4e-10	&	-1.1e-09	&0.03	&0.04		&0.45	&11	&17	\\
40	&1.3947e00	&	\phantom{-}2.6e-14	&	-3.1e-09	&0.03	&0.04	&0.44	&11	&18	\\
100	&1.4632e09	&	-3.1e-15	&	-3.5e-09	&0.03	&0.04	&0.44	&11	&18	\\
\hline 
\end{tabular}	\\
\phantom{aa}	\\[-.1em]
\begin{tabular}{|c||c|c|cc|c|}
\hline
		&					& 	 \multicolumn{1}{c}{error}		&	\multicolumn{2}{|c}{time}	&	\multicolumn{1}{|c|}{iterations}			\\	
$\nu$	& $\alpha_{\epsilon} (P(\cdot ; \nu))$ &		 Alg. \ref{alg:fixed_point2m}			& 	Alg. \ref{alg:fixed_point2m}		&	CCQ
									&  Alg. \ref{alg:fixed_point2m}	 \\[.2em]

\hline
\hline
0	&4.6728e00	&	\phantom{-}9.7e-05	&	0.10	&0.21	&72	\\
10	&4.5928e00	&	-2.1e-08	&	0.10		&0.22		&74	\\
40	&4.6042e00	&	-1.7e-08	&	0.10		&0.24		&77	\\
100	&4.6455e00	&	\phantom{-}1.1e-06	&	0.10		&0.25	&77	\\
\hline 
\end{tabular}	\\
\end{tabular}
\caption{Comparison of the algorithms in terms of errors, runtimes in seconds and number of iterations
on the damping example in \S\ref{sec:numexp_cont_param}. 
CCQ stands for the criss-cross algorithm for 
a quadratic matrix polynomial \cite[Algorithm 2.1]{MehM24}.
(Top) $\epsilon =0.2$ (Middle) $\epsilon =0.4$ (Bottom) $\epsilon = 0.8$.}
\label{tab:epsvarious_alg1_vs_alg3}
\end{table}

In Table \ref{tab:epsvarious_alg1_vs_alg3}, the errors and number of iterations
of Algorithms \ref{alg:fixed_point} and \ref{alg:fixed_point2m}, as well as the
runtimes in seconds for these two algorithms and the criss-cross algorithm for 
a quadratic matrix polynomial (CCQ) \cite[Algorithm 2.1]{MehM24} are listed for
several values of the damping parameter $\nu$. Here, by error for one of these
two algorithms, we mean the difference between the computed values by the
algorithm and the criss-cross algorithm. A positive error means that 
Algorithm \ref{alg:fixed_point} or \ref{alg:fixed_point2m} returns a larger estimate
for $\alpha_{\epsilon}(P(\cdot ; \nu))$, while a negative error means the criss-cross
algorithm returns a larger value. For $\epsilon = 0.8$, Algorithm \ref{alg:fixed_point} does 
not converge, so the error, runtime, number of iterations for Algorithm \ref{alg:fixed_point}
are not listed at the bottom in Table \ref{tab:epsvarious_alg1_vs_alg3}. For $\epsilon = 0.2, 0.4$,
both Algorithm \ref{alg:fixed_point} and Algorithm \ref{alg:fixed_point2m} return
the correct values of $\alpha_{\epsilon}(P(\cdot;\nu))$ up to the prescribed error
tolerance. For these two values of $\epsilon$, they both have significantly lower runtime 
compared to the criss-cross algorithm, and the number of iterations required by both 
algorithms until termination is fewer than 20 in all runs. As also
observed in the previous subsection, Algorithm \ref{alg:fixed_point} requires fewer
number of iterations, and have slightly smaller runtimes compared to Algorithm \ref{alg:fixed_point2m} 
for these choices of $\epsilon$. This is the typical behavior we observe by the two
algorithms as long as $\epsilon$ is not close $\sigma_{\min}(M)$. 

As $\epsilon$ increases the number of iterations until termination by both algorithms 
also increase. For $\epsilon = 0.8$, Algorithm \ref{alg:fixed_point} does not converge 
anymore, while Algorithm \ref{alg:fixed_point2m} still converges even if it requires 
more than 70 iterations. For $\epsilon = 0.8$
and $\nu = 0, 100$, Algorithm \ref{alg:fixed_point2m} returns values even more accurate 
than the criss-cross algorithm as apparent from the results at the bottom of Table 
\ref{tab:epsvarious_alg1_vs_alg3}. In these examples, the criss-cross algorithm
is not capable of returning highly accurate results probably because the eigenvalue
problems it solves are ill-conditioned.
We observe in general that for larger values of $\epsilon$ close to $\sigma_{\min}(M)$,
Algorithm \ref{alg:fixed_point2m} still converges to $\alpha_{\epsilon}(P(\cdot;\nu))$
robustly even if it is at the expense of additional iterations, while Algorithm \ref{alg:fixed_point}
may possibly not converge.

\subsubsection{Comparison of the algorithms on larger matrix polynomials}\label{sec:numexp_wrt_size}
We next compare Algorithms \ref{alg:fixed_point}, \ref{alg:fixed_point2m},
and the criss-cross algorithm on larger quadratic matrix polynomials taken from
\cite[Example 5.3]{MehM24}. These quadratic matrix polynomials still originate 
from mass-damping-spring systems, and are of the form 
$P(\lambda) = \lambda^2 M + \lambda C_{\mathrm{int}} + K$ with $n\times n$ mass, 
stiffness, internal damping matrices of the form
\begin{equation*}
\begin{split}
	&	
	M
		=
	\mathrm{diag}(     
	1,2, \dots, n)	\:	,		\;\;
	K
		=
	\mathrm{tridiag}(-400,800,-400) \, ,		\\
	&
	C_{\mathrm{int}}
	\, = \, 2 \xi M^{1/2} \sqrt{M^{-1/2} K M^{-1/2}} M^{1/2}	,
\end{split}
\end{equation*}
and $\: \xi = 0.005 \:$ for $\: n = 80, 200, 400$.	We compute $\alpha_{\epsilon}(P)$
for $\epsilon = 0.5$ and two different choices of weights,
specifically for $(w_1, w_2, w_3)$ equal to $(1, 1, 1)$ and $(0.7, 1, 0)$, 
by using the three algorithms.

Table \ref{tab:nvarious_alg1_vs_alg3} reports the accuracy of the computed results, 
runtimes of the algorithms in seconds, and the number of iterations
until termination. For the $400\times 400$ example, we have not
provided the results of the criss-cross algorithm, because the computations take
excessive amount of time. It is clear from the table that 
Algorithms \ref{alg:fixed_point}, \ref{alg:fixed_point2m} take considerably
less time compared to the criss-cross algorithm on all examples, yet the
results returned by all three algorithms are the same up to the prescribed
tolerance for problems with size $n = 80, 200$. For $n = 400$, only the
results from Algorithms \ref{alg:fixed_point} and \ref{alg:fixed_point2m}
are available, and they seem to be the same up to the prescribed tolerance.
An observation aligned with those in the previous two subsections is that
Algorithm \ref{alg:fixed_point} seemingly requires fewer iterations that
result in better runtimes as compared to Algorithm \ref{alg:fixed_point2m}.

\begin{table}
\begin{tabular}{|r||c|cc|ccc|cc|}
\hline
		&					& 	 \multicolumn{2}{c}{error}		&	\multicolumn{3}{|c}{time}	&	\multicolumn{2}{|c|}{iterations}			\\	
$n$, $\: w$	& $\alpha_{\epsilon} (P)$ &	Alg. \ref{alg:fixed_point}	 &	Alg. \ref{alg:fixed_point2m}		& Alg. \ref{alg:fixed_point}	& 	Alg. \ref{alg:fixed_point2m}		&	CCQ
							&  Alg. \ref{alg:fixed_point}		&  Alg. \ref{alg:fixed_point2m}	 \\[.2em]

\hline
\hline
$80$, $\mathbf{1}$	&7.8362	&	\phantom{-}2.2e-10	&	-4.7e-09	&	0.6	&0.7	&64.7	&17	&23	\\
$80$, $\mathbf{u}$	&4.9734	&	\phantom{-}4.9e-11	&	-1.6e-09	&0.4		&0.5		&85.6	&11	& 16		\\
$200$, $\mathbf{1}$	&7.8362	&	-6.9e-11	&	-5.0e-09	&6.4		&8.9		&1028	&17	&23	\\
$200$, $\mathbf{u}$	&4.9734	&	\phantom{-}7.1e-11	&	-1.6e-09	&4.3		&6.2		&874		&11	&16	\\
$400$, $\mathbf{1}$	&7.8362	&	---	&	---	& 46.6	& 63.8	& ---	&17	&23	\\
$400$, $\mathbf{u}$	&4.9734	&	---	&	---	&44	&57.9	& ---	&11	&16	\\ 
\hline 
\end{tabular}	
\caption{Comparison of the algorithms for the quadratic matrix polynomials in
\S\ref{sec:numexp_wrt_size} of size $n = 80, 200, 400$, and with the
weights $w = (w_m, w_c, w_k)$ equal to ${\mathbf 1} := (1, 1, 1)$ and
${\mathbf u} := (0.7,1,0)$.}
\label{tab:nvarious_alg1_vs_alg3}
\end{table}

\subsection{The pseudospectral abscissa of a matrix}\label{subsec:numexp_matrix}
\subsubsection{Stabilization by output feedback example}\label{subsec:numexp_cont_param_M}
We experiment here with a stabilization by output feedback problem 
that involves a  matrix $A(\nu) = A + \nu_1 b c_1^T + \nu_2 b c_2^T$ with $\nu := (\nu_1, \nu_2)$
for prescribed $A \in {\mathbb R}^{130\times 130}$, $b , c_1, c_2 \in {\mathbb R}^{130}$ 
dependent on two real parameters $\nu_1, \nu_2$. This problem corresponds to the HF1
example in the $COMPl_{e}ib$ collection. We compute $\alpha_{\epsilon}(A(\nu))$
for $\epsilon = 0.8$ and $\nu$ on the box $[-1,1]\times[-1,1]$ 
(i.e., for every $\nu$ on a uniform grid for this box) using Algorithm \ref{alg:fixed_point_matrix}
and using the criss-cross algorithm \cite[Algorithm 3.1]{BurLO03}. The plot
of the computed $\alpha_{\epsilon}(A(\nu)$ by Algorithm \ref{alg:fixed_point_matrix}
is depicted on the left in Figure \ref{fig:error_mat_fp}, whereas the error of 
Algorithm \ref{alg:fixed_point_matrix} (i.e., the absolute value of the difference
between the computed values by Algorithm \ref{alg:fixed_point_matrix} and the
criss-cross algorithm) is shown on the right in the same figure. The error 
of  Algorithm \ref{alg:fixed_point_matrix} appears
to be less than the prescribed tolerance throughout the box.

The total runtimes in seconds by both algorithms, and the average 
number of iterations until termination by Algorithm \ref{alg:fixed_point_matrix}
are given in Table \ref{tab:matrix_fp_multi_times}. Algorithm \ref{alg:fixed_point_matrix} 
requires significantly smaller computation time compared to the criss-cross algorithm
to compute $\alpha_{\epsilon}(A(\nu))$ with the same accuracy up to the prescribed 
tolerance. 

\begin{table}
	\begin{tabular}{|c||cc|}
		\hline
								&	time		&	iterations		\\
		\hline
		\hline
		Alg. \ref{alg:fixed_point_matrix}		&	48.1		&	2.15			\\
		criss-cross				&	319.6		&	---			\\
		\hline
	\end{tabular}
		\caption{ The runtimes in seconds of Algorithm \ref{alg:fixed_point_matrix},
		the criss-cross algorithm, and the number of iterations of Algorithm \ref{alg:fixed_point_matrix} 
				for the HF1 example considered in \S\ref{subsec:numexp_cont_param_M}. }
		\label{tab:matrix_fp_multi_times}
\end{table}




\begin{figure}
	\begin{tabular}{cc}
		\hskip -3.5ex
			\includegraphics[width = .5\textwidth]{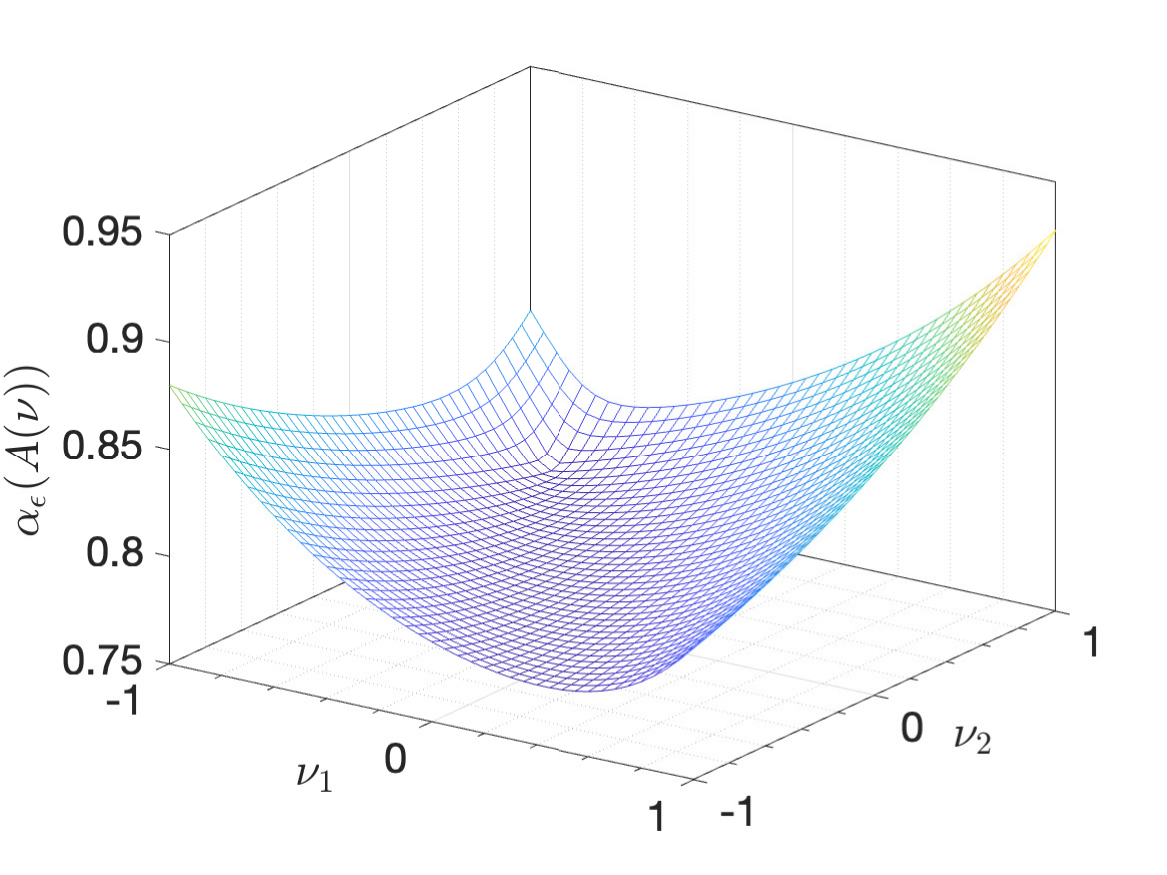} & 
			\hskip -1.5ex
			\includegraphics[width = .5\textwidth]{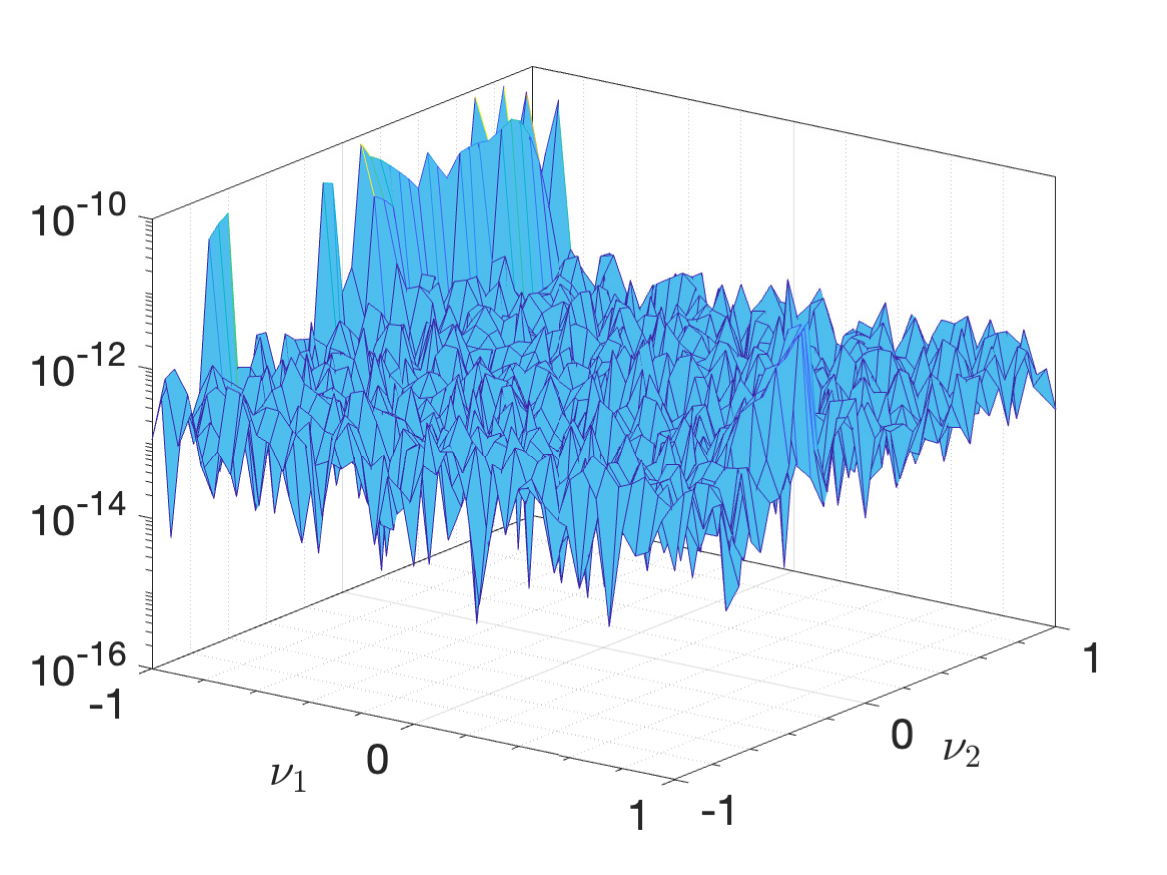} 
	\end{tabular}
		\caption{  The plot of computed $\alpha_{\epsilon}(\nu)$ by
		Algorithm \ref{alg:fixed_point_matrix} (left plot) and its error (right plot)
		as a function of $\nu = (\nu_1, \nu_2) \in [-1,1]\times[-1,1]$ 
		for the HF1 example in \S\ref{subsec:numexp_cont_param_M}.}
		\label{fig:error_mat_fp}
\end{figure}

\subsubsection{Comparing the performances of the algorithms}\label{sec:bench}
The experiments here are categorized into two: 
\begin{enumerate}
\item[(1)] small dense examples, 
where experiments are performed on various matrices of size $100 \times 100$ from 
the MATLAB package {\sf EigTool} \cite{eigtool}; 
\item[(2)] larger sparse examples, which are also obtained from {\sf EigTool}. 
\end{enumerate}
We compare the accuracy and computation time of the algorithms on these two classes. 
For small dense matrices, the accuracy of Algorithm \ref{alg:fixed_point_matrix} and 
the Guglielmi and Overton algorithm (GO) \cite[Algorithm PSA1]{GugO11}
are determined by comparing their estimates for the $\epsilon$-pseudospectral abscissa 
with those returned by the the criss-cross algorithm (CC) \cite[Algorithm 3.1]{BurLO03}.
For larger sparse examples, 
CC is no more computationally feasible, and we only provide 
the difference between the estimates for the $\epsilon$-pseudospectral 
abscissa returned by Algorithm \ref{alg:fixed_point_matrix} and by GO.

\medskip

\noindent
\textbf{Small dense examples.} \\
The experiments here are conducted for $\epsilon=0.2$, and the results are
reported in Table \ref{tab:table1}. 
Recall that error in this table for either one of Algorithm \ref{alg:fixed_point_matrix} or GO,
refers to the difference between the computed value of $\alpha_{\epsilon}(A)$
by the algorithm and the value by CC. For both values of $\epsilon$, 
the computed values for $\alpha_{\epsilon}(A)$ by
Algorithm \ref{alg:fixed_point_matrix} and GO are 
nearly the same as those returned by the criss-cross algorithm. Morover,
Algorithm \ref{alg:fixed_point_matrix} has smaller runtime compared to GO in all cases.
Even on these smaller examples, excluding the Grcar matrix, 
Algorithm \ref{alg:fixed_point_matrix} is faster than the criss-cross algorithm,


\begin{table} 
\begin{tabular}{|c||c|cc|ccc|cc|}
\hline
		&					& 	 \multicolumn{2}{c}{error}		&	\multicolumn{3}{|c}{time}	&	\multicolumn{2}{|c|}{iterations}			\\	
Example	& $\alpha_{\epsilon} (A)$ &	Alg. \ref{alg:fixed_point_matrix}	 &	GO		& Alg. \ref{alg:fixed_point_matrix}	& 	GO		&	CC
							&  Alg. \ref{alg:fixed_point_matrix}		&  GO	 \\[.2em]

\hline
\hline
grcar		&	3.1252	&	-2.0e-07	&	-5.0e-07	&	0.38		&2.43		&0.12	&88	&164		\\
hatano	&	3.2077	&	-1.8e-15	&	-1.0e-09	&0.02	&0.09		&0.09	&4	&8	\\
kahan	&	1.2795	&	\phantom{-}2.0e-15	&	-3.2e-08	&0.02	&0.58	&0.07	&5	& 43	\\
landau	&	1.1990	&	-2.2e-15	&	-1.6e-08	&0.04	&0.23	&0.19	&4	& 26		\\
riffle		&	1.2387	&	\phantom{-}2.0e-15	&	-6.3e-09	&0.02	&0.20	&0.07	&5	&22	\\
transient		&	0.4731	&	-4.1e-11	&	-1.4e-08	&0.08	&0.60	&0.09	&6	&35	\\
twisted		&	2.1719	&	-1.2e-10	&	-8.0e-09	&0.04	&0.24	&0.32	&6	&15	\\
\hline 
\end{tabular}
\caption{Accuracy, runtimes (in seconds), iterations of Algorithm \ref{alg:fixed_point_matrix} and GO
on dense examples of size $n = 100$ from {\sf EigTool} for  $\epsilon = 0.2$. CC and GO stand for the
criss-cross algorithm for a matrix \cite[Algorithm 3.1]{BurLO03} and 
the Guglielmi, Overton algorithm \cite[Algorithm PSA1]{GugO11}, respectively.}
\label{tab:table1}  
\end{table}

\medskip

\noindent
\textbf{Larger sparse examples.} \\
The results of numerical experiments on sparse matrices from {\sf EigTool}  
for $\epsilon =0.2$ are provided in Table \ref{tab:table5}. 
The column of `gap' refers to the difference of the computed value
of $\alpha_{\epsilon}(A)$ by Algorithm \ref{alg:fixed_point_matrix} and by GO.
This gap is less than $10^{-6}$ in all examples indicating that the algorithms return
similar values. The first two examples (`olmstead' and `supg') are modest in size,
so we have computed $\alpha_{\epsilon}(A)$ for these two examples by also 
employing the criss-cross algorithm. Indeed, the results returned by 
Algorithm \ref{alg:fixed_point_matrix} for these two examples differ from 
those returned by the criss-cross algorithm by amounts smaller than $2\times 10^{-8}$.
GO requires more iterations compared to Algorithm \ref{alg:fixed_point_matrix} in these examples.
This translates into a longer computation time for GO in the majority of the examples.
The convergence of GO is notably slow for the `supg', `markov', `pde' examples.
\begin{table} 
\begin{tabular}{|c||c|c|ccc|cc|}
\hline
		&					& 		&	\multicolumn{3}{|c}{time}	&	\multicolumn{2}{|c|}{iterations}			\\	
Example, $n$	& $\alpha_{\epsilon} (A)$ &	gap	&	Alg. \ref{alg:fixed_point_matrix}	& 	GO		&	CC
							&  Alg. \ref{alg:fixed_point_matrix}		&  GO	 \\[.2em]
\hline
\hline
olmstead, 500	&	4.7175	&	\phantom{-}4.5e-11		&	0.11	&0.07	&1.66	&2	&3	\\	
supg, 400	&	0.2942	&	\phantom{-}8.7e-07	&	0.09	&0.78		&29.99	&6	&721		\\	
dwave, 2048	&	1.1788	&	\phantom{-}5.4e-08	&	0.08	&0.16	& ---	&2	&17	\\
markov, 5050	&	1.2457	&	\phantom{-}4.1e-08	&	1.12		&2.60	& ---	& 19	& 66	\\
pde, 2961 	&	10.3775	&	\phantom{-}2.6e-07	&	0.78	&1.28	& ---	&40	& 85	\\
rdbrusselator, 3200	&	0.6037	&	-8.6e-12	&	0.25	& 0.11	& ---	& 4	&6	\\
\hline 
\end{tabular}
\caption{Comparison of the computed values of $\alpha_{\epsilon}(A)$, runtimes (in seconds), number of iterations
of Algorithm \ref{alg:fixed_point_matrix} and GO 
				on sparse large examples from {\sf EigTool} for $\epsilon = 0.2$.}
\label{tab:table5}							
\end{table}

\subsubsection{Accuracy of the algorithms on a large dataset}\label{sec:num_exp_acc}
We generate a dataset consisting of 1000 matrices by means of
the MATLAB command \\[.35em]
\phantom{aa}
\texttt{A = c1*randn(n) + c2*sqrt(-1)*randn(n)} \\[.35em] 
where the size $n \in [200,400]$ and the coefficients $c1 , c2 \in [0.2,4]$ are also chosen randomly
by using uniform distributions. 
Here, we test the accuracy of Algorithm \ref{alg:fixed_point_matrix} on this dataset, 
by varying the parameter ${\mathcal N}$ (see the last part of Section \ref{sec:software}
for a detailed explanation of this parameter), controling the number of times 
Algorithm \ref{alg:fixed_point_matrix} is executed, with each execution starting from 
a different $z_0$, chosen based on eigenvalue perturbation theory. 
Recall that the largest value returned by these executions
is taken as the computed value of $\alpha_{\epsilon}(A)$. We also provide a comparison
with the accuracy of GO. For each algorithm, we accept the 
computed value of the $\epsilon$-pseudospectral abscissa correct if this estimate and
the $\epsilon$-pseudospectral abscissa returned by the CC algorithm do not differ 
by more than $2 \cdot 10^{-6}$.  

For various choices of ${\mathcal N}$, the percentages of correct values of $\alpha_{\epsilon}(A)$
returned by Algorithm \ref{alg:fixed_point_matrix} with ${\mathcal N}$ restarts 
are presented in Table \ref{tab:table7} along with the percentages for GO for $\epsilon = 0.01, 0.2, 0.5$. 
For  smaller $\epsilon$ values, Algorithm \ref{alg:fixed_point_matrix} even with ${\mathcal N} = 1$
is usually more accurate than GO. However, the accuracy of Algorithm \ref{alg:fixed_point_matrix}
degrades for larger $\epsilon$ values, which can be attributed to fact that the eigenvalue
perturbation theory ideas used to initialize Algorithm \ref{alg:fixed_point_matrix} (i.e., to choose $z_0$) 
are not as accurate anymore.
Still, with a modest number of restarts (e.g., with ${\mathcal N} = 3$), a high
accuracy is reached even for larger values of $\epsilon$, such as $\epsilon = 0.5$.
For all three values of $\epsilon$ in this example,
Algorithm \ref{alg:fixed_point_matrix} with ${\mathcal N} = 7$ restarts is as accurate 
as the criss-cross algorithm. 
\begin{table}
    \begin{tabular}{|c||c||c|c|c|c|c|c|c|}
        \hline
        $\;\; \epsilon$ 
        & $\;\:$GO & ${\mathcal N} = 1$  &	${\mathcal N} = 2$ & ${\mathcal N} = 3$ & ${\mathcal N} = 4$ & ${\mathcal N} = 5$ & ${\mathcal N} = 6$ & ${\mathcal N} = 7$		\\
        \hline
	\hline
	0.01 & 99.8	& 99.9 & 100 &100	& 100	& 100	& 100	& 100		\\
	0.2 & 96.6 & 96.7 & 99.6 & 99.9  	& 99.9	& 100	&	100	&	100		\\
	0.5 & 92.9 & 92.5	& 98.3	& 99.3		& 99.6	& 99.7	& 99.8	& 100		\\
	 \hline
    \end{tabular}
\caption{The percentages of correctly computed $\epsilon$-pseudospectral abscissa on the dataset consisting
of 1000 random matrices for Algorithm \ref{alg:fixed_point_matrix} with ${\mathcal N}$ restarts, 
as well as for the GO algorithm for $\epsilon = 0.01, 0.2, 0.5$.}
\label{tab:table7}
\end{table}


\section{Concluding remarks}
By employing eigenvalue perturbation theory,
we have derived first-order approximation formulas with errors ${\mathcal O}(\epsilon^2)$
to approximate the $\epsilon$-pseudospectral abscissa of an analytic matrix-valued function,
and more specifically of a matrix. In the matrix setting, we have additionally deduced a
second-order approximation formula with ${\mathcal O}(\epsilon^3)$ error. 
The derived formulas are cheap to compute, and provide remarkably accurate 
approximations for small $\epsilon$.

For larger values of $\epsilon$, we have tailored two fixed-point iterations
to compute the $\epsilon$-pseudospectral abscissa in the general matrix-valued
function setting by making use of the deduced first-order formulas. We have shown
that, under a nondegeneracy assumption, the fixed-point iterations can only converge  
to a right boundary point of the $\epsilon$-pseudospectrum with a vertical tangent line,
likely to be a locally rightmost point.
The specialization of the fixed-point iterations to the matrix setting is also presented.

When these locally convergent fixed-point iterations are initialized with points that
are close to globally rightmost points in the $\epsilon$-pseudospectrum, which 
is obtained as a by-product of the approximation formulas based on eigenvalue
perturbation theory, they converge to the globally rightmost point in a vast majority of 
the cases. 

We make MATLAB implementations of the resulting fixed-point iterations 
with proper initializations publicly available \cite{AhmM25}.
The computational bottleneck for our current implementations is that they
compute all eigenvalues and eigenvectors once in order to benefit from
formula (\ref{eq:cru_eig}) or (\ref{eq:cru_eig3}) that yields an estimate
of the eigenvalue moving furthest to right under perturbations of
norm at most $\epsilon$. The eigenvalue $\mu_0$ satisfying (\ref{eq:cru_eig}) 
or (\ref{eq:cru_eig3}) can possibly be estimated by a subspace approach,
such as a Krylov subspace approach, without going through the burden
of computing all eigenvalues. This appears to be a direction worth exploring.

\appendix

\section{Proof of Theorem \ref{thm:berror_NEP}}\label{sec:proof_bw_nle_error}
\begin{proof}
Observe that
\begin{equation*}
\begin{split}
	\left\{ (T + \underline{\Delta T})(z)	\right\}	v
			\;	=	\;
	T(z) v	+	\Delta T(z) v			
			\; &	=	\;
	\sigma_{\min}(T(z)) u		+	\sum_{j=1}^\kappa t_j(z) w_j \underline{\Delta T}_j v		\\
			&	=	\;
	\sigma_{\min}(T(z)) u		+	
		\sum_{j=1}^\kappa t_j(z) w_j \left\{ \frac{ -\varphi(z) w_j \overline{t_j(z)} u }  
								{ \sqrt{\sum_{\ell = 1}^\kappa w^2_{\ell} | t_{\ell}(z) |^2} } \right\}		\\
			&	=	\;
	\sigma_{\min}(T(z)) u		-	\left\{ \sqrt{\sum_{\ell = 1}^\kappa w^2_{\ell} | t_{\ell}(z) |^2} \right\}   \varphi(z) u
		\;	=	\;	0	\:	,
\end{split}	
\end{equation*}
where the second equality is due to the fact $u$, $v$ are consistent unit left, unit right singular vectors
corresponding to $\sigma_{\min}(T(z))$, and we employ the definition of $\underline{\Delta T}_j$
in (\ref{eq:defn_Tj}) for the third equality. Consequently, $\mathrm{det}\left\{ (T + \underline{\Delta T})(z) \right\} = 0$. 
By analogous calculations $u^\ast \left\{ (T + \underline{\Delta T})(z)	\right\} = 0$ as well. 
Furthermore,
$
	\left\|
			\left[
			\begin{array}{ccc}
				\underline{\Delta T}_1	&	\dots		&	\underline{\Delta T}_{\kappa}
			\end{array}
			\right]
	\right\|_2	=	\varphi(z)
$
so that $\underline{\Delta T} \in {\mathcal P}_{\varphi(z)}$ 
from the definition of ${\mathcal P}_{\varphi(z)}$ in (\ref{eq:defn_NEP_pspec}).
Combining $\underline{\Delta T} \in {\mathcal P}_{\varphi(z)}$ with 
$\mathrm{det}\left\{ (T + \underline{\Delta T})(z) \right\} = 0$ yield
\begin{equation}\label{eq:berror_ub}
	\inf\{ \epsilon \; | \; \exists \Delta T \in {\mathcal P}_{\epsilon}	
			\text{ s.t. }	\mathrm{det}\left\{ (T + \Delta T)(z) \right\} = 0	\}
				\;	\leq		\;	\varphi(z)	\:	.
\end{equation}

Take any 
$\Delta T(\lambda) 
			= t_1(\lambda) w_1\Delta T_{\, 1} + \dots 
						+ t_\kappa(\lambda) w_\kappa \Delta T_{\, \kappa}$
such that $\mathrm{det}\left\{ (T + \Delta T)(z) \right\} = 0$.
By the Eckart-Young-Mirsky theorem \cite[Theorem 2.5.3]{GolVL96}, we have
\begin{equation*}
\begin{split}
		\sigma_{\min}(T(z))	
			\;	\leq		\;
		\| \Delta T(z) \|_2 	\;	&	=	\;
		\left\|
		\left[
			\begin{array}{ccc}
				\Delta T_1		&	\dots		&	\Delta T_\kappa
			\end{array}
		\right]
		\left[
			\begin{array}{c}
				w_1 \, t_1(z) I		\\
					\vdots		\\
				w_\kappa  \, t_\kappa(z) I
			\end{array}
		\right]	
		\right\|_2			\\
			\;	&		\leq		\;
			\left\|
			\left[
				\begin{array}{ccc}
					\Delta T_1		&	\dots		&	\Delta T_\kappa
				\end{array}
			\right]
		\right\|_2
		\sqrt{w_1^2 | t_1(z) |^2 +  \dots  + w_\kappa^2 | t_\kappa(z) |^2}	\:	.
\end{split}
\end{equation*}
This shows that 
$
		\left\|
			\left[
				\begin{array}{ccc}
					\Delta T_1		&	\dots		&	\Delta T_\kappa
				\end{array}
			\right]
		\right\|_2	\geq \varphi(z)
$, that is $\Delta T \not\in {\mathcal P}_{\epsilon}$ for any $\epsilon < \zeta(z)$, implying
\begin{equation}\label{eq:berror_lb}
	\inf\{ \epsilon \; | \; \exists \Delta T \in {\mathcal P}_{\epsilon}	
			\text{ s.t. }	\mathrm{det}\left\{ (T + \Delta T)(z) \right\} = 0	\}
				\;	\geq		\;	\varphi(z)	\:	.
\end{equation}
Combining (\ref{eq:berror_ub}) and (\ref{eq:berror_lb}), we deduce
$\inf\{ \epsilon \; | \; \exists \Delta T \in {\mathcal P}_{\epsilon}	
			\text{ s.t. }	\mathrm{det}\left\{ (T + \Delta T)(z) \right\} = 0	\} = \varphi(z)$.
Moreover, this infimum is attained for $\Delta T = \underline{\Delta T}$,
so can be replaced by minimum.
\end{proof}

\section{Proof of Lemma \ref{Lem:char_rightp}}\label{sec:opt_rightmost_pt}
\begin{proof}
Suppose $z \in {\mathbb C}$ is a locally rightmost point in $\Lambda_{\epsilon}(T)$.
By the continuity of the smallest singular value
$\sigma_{\min}(M(\cdot))$, assuming $\sigma_{\min}(M(z)) = 0$ implies that
for every ball ${\mathcal B}_r(z) = \{ \widetilde{z} \; | \; |\widetilde{z} - z | < r \}$ 
with positive radius $r$ sufficiently small, we have $\sigma_{\min}(M(\lambda)) \leq \epsilon \:$
$\forall \lambda \in {\mathcal B}_r(z)$, that is $\lambda \in \Lambda_{\epsilon}(T) \:$
$\forall \lambda \in {\mathcal B}_r(z)$. But this contradicts with the fact that
$z$ is a locally rightmost point. Thus, $\sigma_{\min}(M(z)) > 0$. 
Recall also that $\sigma_{\min}(T(z))$ is simple, so $\sigma_{\min}(M(z))$ is simple as well.
Letting $(x,y) = (\text{Re}(z), \text{Im}(z)) \in {\mathbb R}^2$, and
by employing continuity once again, $\sigma_{\min}({\mathcal M}(u))$ 
is positive and simple for all $u$
in an open neighborhood of ${\mathcal U}$ of $(x,y)$. Then $\sigma_{\min}({\mathcal M}(u))$ 
is real analytic (so is
differentiable infinitely many times)
in ${\mathcal U}$ \cite[Lemma 2.5]{MehM24}, in particular at $(x,y) \in {\mathcal U}$.


Now, letting $\sigma(s_1, s_2) :=  \sigma_{\min}({\mathcal M}(s_1 , s_2))$ 
for $s_1, s_2 \in {\mathbb R}$, consider the optimization problem
\begin{equation}\label{eq:pspa_opt_char}
	\max \{  s_1 \; | \; s = (s_1, s_2) \in {\mathbb R}^2 \text{ s.t. } \sigma(s_1, s_2) \leq \epsilon \}
	\:	,
\end{equation}
for which $(x,y) = (\text{Re}(z), \text{Im}(z))$ is a local maximizer.
From the analytical formula for the derivative of a singular value function \cite[Lemma 2.5]{MehM24}, we have
\begin{equation}\label{eq:sval_par_ders}
	\nabla \sigma(x,y)
		\;	=	\;
	\left(
		\frac{\partial \sigma}{\partial s_1}(x,y)	,
		\frac{\partial \sigma}{\partial s_2}(x,y)
	\right)	
		\;		=	\;
	\bigg(
		\text{Re}\{ u^\ast {\mathcal M}_{s_1}(x,y) v \} \:	,  \:	\text{Re} \{ u^\ast {\mathcal M}_{s_2}(x,y) v \}
	\bigg)	\:	,
\end{equation}
where we assume without loss of generality that the singular vectors $u$, $v$ are of unit norm
(i.e., as $u^\ast {\mathcal M}_{s_1}(x,y) v$, $u^\ast {\mathcal M}_{s_2}(x,y) v$ are positive multiples 
of $\widehat{u}^\ast {\mathcal M}_{s_1}(x,y) \widehat{v}$, $\widehat{u}^\ast {\mathcal M}_{s_2}(x,y) \widehat{v}$,
respectively, for any other
pair of consistent left, right singular vectors $\widehat{u}$, $\widehat{v}$ corresponding to
$\sigma_{\min}(T(z))$, it suffices to show $u^\ast M'(z) v$ is positive, real).
Thus, it follows from ${\mathcal S}(z) \neq 0$ (by the nondegeneracy of $z$) that
$\nabla \sigma(x,y) \neq 0$. As a result, the linear independence constraint
qualification holds for the optimization problem in (\ref{eq:pspa_opt_char}) at $(x,y)$.
From the first-order necessary conditions \cite[Theorem 12.1]{NocW00} applied
to (\ref{eq:pspa_opt_char}), there exists a nonnegative Lagrange multiplier $\mu$ such that
\begin{equation}\label{eq:opt_cond}
		1	-	\mu \frac{\partial \sigma}{\partial s_1}(x,y)	\;	=	\;	0	\:	
		\quad\quad\;	\text{and}		\quad\quad\;\;
		\mu	\frac{\partial \sigma}{\partial s_2}(x,y)	\;	=	\;	0	\:	.
\end{equation}
It follows from the left-hand equality in (\ref{eq:opt_cond}) that $\mu \neq 0$, so $\mu > 0$.
Now plugging the expressions in (\ref{eq:sval_par_ders}) for the partial derivatives of $\sigma(s_1, s_2)$
at $(x,y)$ in (\ref{eq:opt_cond}), we obtain
\[
	\text{Re}\{ u^\ast {\mathcal M}_{s_1}(x,y) v \}	=	1/\mu
	\quad\quad\;	\text{and}		\quad\quad\;\;
	\text{Re} \{ u^\ast {\mathcal M}_{s_2}(x,y) v \}	=	0	\:	.	
\]
Consequently, ${\mathcal S}(z)$ is real and positive as claimed. 
\end{proof}

\bigskip

\noindent
\textbf{Acknowledgments.}
Part of this work has been carried out while EM was visiting Gran Sasso Science
Institute (GSSI) in L'Aquila, Italy. EM is grateful to GSSI for hosting him,
as well as Nicola Guglielmi for providing financial support and insightful discussions 
during this visit.

\bibliography{AhmM25}

\end{document}